\newtheorem{theorem}{Theorem}
\newtheorem{lemma}[theorem]{Lemma}
\newtheorem{remark}[theorem]{Remark}
\newtheorem{corollary}[theorem]{Corollary}
\newtheorem{assumption}{Assumption}
\newtheorem{condition}{Condition}
\newenvironment{keywords}{%
  \par\noindent\textbf{Keywords: }%
}{\par}
\newcommand{\BlackBox}{\rule{1.5ex}{1.5ex}}
    \renewenvironment{proof}{\par\noindent{\bf Proof\ }}{\hfill\BlackBox\\[2mm]}
    \newenvironment{proof}{\par\noindent{\bf Proof\ }}{\hfill\BlackBox\\[2mm]}
\title{A Provably Convergent Plug-and-Play Framework for Stochastic Bilevel Optimization\thanks{%
This work was partially presented at International Conference on Machine Learning (ICML) 2024 (\protect\cite{chuspaba}).%
}}
\author{%
Tianshu Chu\thanks{Institute of Operations Research and Information Engineering, Beijing University of Technology; National Center for Applied Mathematics Shenzhen; \texttt{chuts@emails.bjut.edu.cn}}
\and
Dachuan Xu\thanks{Institute of Operations Research and Information Engineering, Beijing University of Technology; \texttt{xudc@bjut.edu.cn}}
\and
Wei Yao\thanks{National Center for Applied Mathematics Shenzhen, and Department of Mathematics, Southern University of Science and Technology; \texttt{yaow@sustech.edu.cn}}
\and
Chengming Yu\thanks{School of Science, Beijing University of Posts and Telecommunications; National Center for Applied Mathematics Shenzhen; \texttt{yucm@bupt.edu.cn}}
\and
Jin Zhang\thanks{Department of Mathematics, and National Center for Applied Mathematics Shenzhen, Southern University of Science and Technology; \texttt{zhangj9@sustech.edu.cn}}
}
\date{}
\begin{document}
\maketitle

\begin{abstract}

Bilevel optimization has recently attracted significant attention in machine learning due to its wide range of applications and advanced hierarchical optimization capabilities. In this paper, we propose a plug-and-play framework, named PnPBO, for developing and analyzing stochastic bilevel optimization methods.
This framework integrates both modern unbiased and biased stochastic estimators into the single-loop bilevel optimization framework introduced in \cite{dagreou2022framework}, with several improvements.
In the implementation of PnPBO, all stochastic estimators for different variables can be independently incorporated, and an additional moving average technique is applied when using an unbiased estimator for the upper-level variable.
In the theoretical analysis, we provide a unified convergence and complexity analysis for PnPBO, demonstrating that the adaptation of various stochastic estimators (including PAGE, ZeroSARAH, and mixed strategies) within the PnPBO framework achieves optimal sample complexity. Specifically, in the finite-sum setting, the resulting complexity matches the lower bound in \cite{dagreou2023lower} and is comparable to that of single-level optimization \citep{pmlr-v97-zhou19b}.
This resolves the open question of whether the optimal complexity bounds for solving bilevel optimization are identical to those for single-level optimization. Finally, we empirically validate our framework, demonstrating its effectiveness on several benchmark problems and confirming our theoretical findings.

\end{abstract}

\begin{keywords}
bilevel optimization, stochastic optimization, plug-and-play, sample complexity
\end{keywords}

\section{Introduction}

Bilevel optimization (BLO) effectively addresses challenges arising from hierarchical optimization, where the decision variables in the upper level are also involved in the lower level. In recent years, BLO has gained increasing attention due to its extensive and effective applications, including hyperparameter optimization \citep{franceschi2018bilevel}, meta-learning \citep{ji2020convergence}, continual learning \citep{NEURIPS2023_a0251e49}, and reinforcement learning \citep{shen2024principled}.

In this paper, we focus on the nonconvex-strongly-convex BLO problem under classical assumptions, formulated as follows
\begin{align}
\min_{x\in \mathbb{R}^{d_x}}\
 H(x):=f(x,y^*(x)) \label{UL}\quad
\text{s.t.}\ \ \,
y^*(x):=\arg \min_ {y\in \mathbb{R}^{d_y}}g(x,y) ,
\end{align}
where $H(x)$ denotes the total objective function, also referred to as the value function \citep{dagreou2022framework,dagreou2023lower,chen2024optimal}. The upper-level (UL) objective \( f(x,y) \) is \( L^f \)-smooth and possibly nonconvex.
The lower-level (LL) objective \( g(x,y) \) is \( L^g_1 \)-smooth and strongly convex in $y$, and its gradient \( \nabla g \) is \( L^g_2 \)-smooth.
As is the case in many applications of interest in machine learning \citep{shalev2014understanding}, the UL and LL objective functions take the finite-sum form
\begin{align}\label{pro_fini}
f(x, y) =
\frac{1}{n} \sum_{i=1}^n F_i\left(x, y \right),
\quad
g(x, y) =
\frac{1}{m} \sum_{j=1}^m G_j\left(x, y \right).
\end{align}
A widely used and effective strategy for solving the BLO problem in \eqref{UL} involves using implicit differentiation \citep{pedregosa2016hyperparameter, liao2018reviving, lorraine2020optimizing, li2022fully}, which leads to the following expression for $\nabla H(x)$ (known as the hypergradient)
\begin{equation*}
\nabla H(x) = \nabla_1 f(x, y^*(x)) - \nabla_{12}^2 g(x, y^*(x)) [\nabla_{22}^2 g(x, y^*(x))]^{-1} \nabla_2 f(x, y^*(x)).
\end{equation*}
The bilevel structure typically makes the objective $H(x)$ nonconvex, except in a few special cases. Therefore, our goal is to develop efficient gradient-based algorithms to find an $\epsilon$-stationary point of $H(x)$, specifically, a point $x$ that satisfies the inequality $\mathbb{E}\|\nabla H(x)\|^2\leq \epsilon$ in the stochastic setting \citep{Ghadimi-lan}.
To achieve this, a promising approach is to adapt stochastic methods from single-level optimization, which we refer to as stochastic estimators, to the bilevel optimization context. However, estimating hypergradients requires solving LL problems and computing inverse Hessian-vector products.

To tackle these challenges, \cite{ghadimi2018approximation} use multi-step gradient descent to approximately solve the LL problem and incorporate a truncated Neumann series to approximate the Hessian inversion.
Subsequent studies \citep{yang2021provably, khanduri2021near} leverage variance reduction techniques, such as STORM \citep{cutkosky2019momentum} or SARAH \citep{pham2020proxsarah}, improving the sample complexity to \( \tilde{\mathcal{O}}(\epsilon^{-1.5}) \), where the \( \tilde{\mathcal{O}} \) notation hides a factor of \( \log(\epsilon^{-1}) \).
Notably, methods using Neumann series approximations introduce an additional $ \tilde{\mathcal{O}} (1)$ factor in both sample complexity\footnote{The sample complexity refers to the number of calls made to stochastic gradients and Hessian (Jacobian)-vector products required to obtain an $\epsilon$-stationary point.} and batch size.

Another approach to computing the inverse Hessian-vector product is to solve the corresponding linear system.
For example,
by employing a warm-start strategy and performing one or more SGD steps to solve LL problems and the linear systems, stochastic algorithms such as AmIGO \citep{arbel2022amortized} and SOBA \citep{dagreou2022framework} achieve a sample complexity of \( \mathcal{O}(\epsilon^{-2}) \).
However, AmIGO requires a batch size that scales with $\epsilon^{-1}$, and SOBA depends on a stronger smoothness condition.
Thus, there is a gap in the complexity analysis between stochastic bilevel and single-level optimization when implementing the SGD gradient estimator.
Recently, this gap has been effectively addressed by MA-SOBA \citep{chen2024optimal} through the incorporation of an additional moving average (MA) technique for the UL variable in SOBA.

For other stochastic gradient estimators, SABA \citep{dagreou2022framework} integrates the stochastic estimator SAGA \citep{saga}, and SRBA \citep{dagreou2023lower} employs SARAH to propose algorithms with lower sample complexity. As indicated by the results of SABA in Appendix D of \cite{dagreou2022framework}, the sample complexity of SABA, under classical smoothness assumptions, differs from the performance of its stochastic estimator SAGA in single-level optimization by a $\mathcal{O}((n+m)^{1/3})$ factor. Additionally, the theoretical analysis of SRBA requires stronger smoothness conditions to achieve a better sample complexity of $\mathcal{O}((n+m)^{1/2}\epsilon^{-1})$.

In this paper, we address several open and important questions in stochastic bilevel optimization. First, we observe that existing algorithms and their theoretical analyses typically rely on a specific stochastic estimator, with case-by-case theoretical analysis. Therefore, the following question arises:\\
\textit{Q1: Can we propose a unified algorithm design and theoretical analysis framework in which stochastic estimators can be integrated flexibly and independently?}\\
Furthermore, observing that there remains a gap between stochastic bilevel and single-level optimization when using SAGA, the following question needs to be addressed:\\
\textit{Q2: Is it possible to bridge the gap between stochastic bilevel and single-level optimization when using SAGA?}\\
In addition, noting that SRBA \citep{dagreou2023lower} utilizes a double-loop structure and requires stronger smoothness conditions, one fundamental question arises:\\
\textit{Q3: How can we develop a fully single-loop algorithm for solving stochastic bilevel optimization problems that achieves optimal sample complexity of $\mathcal{O}((n+m)^{1/2}\epsilon^{-1})$ under standard smoothness assumptions in the finite-sum setting?}

\subsection{Contribution}
We summarize the major contributions of this work as follows.
\begin{itemize}
    \item \textbf{A unified plug-and-play framework.}
    We propose a flexible and unified framework for stochastic BLO, referred to as PnPBO. At its core, PnPBO consists of three key features:
    (1) It integrates both unbiased and biased stochastic estimators into the single-loop BLO algorithm framework presented in \cite{dagreou2022framework}, allowing independent incorporation of stochastic estimators for different variables;
    (2) An additional MA technique is suggested when an unbiased estimator is used for the UL variable;
    (3) Clipping is applied to the implicit variable, following standard techniques in deep learning practices.
\item \textbf{New algorithms with optimal sample complexity.}
The PnPBO framework is versatile, accommodating all existing single-loop BLO algorithms.
Within this framework, we employ distinct designs of stochastic estimator combinations, resulting in some specific instances of PnPBO:
(1) SPABA, SFFBA, and MSEBA, which use the stochastic estimators PAGE \citep{li2021page}, ZeroSARAH \citep{li2021zerosarah}, and a mixed strategy, {achieving optimal sample complexity that matches the finite-sum lower bound in \cite{dagreou2023lower}}; (2) MA-SABA, which bridges the gap between single-level and BLO when using SAGA; (3) Generalized SPABA and SRMBA: These are instances of the extended PnPBO framework applied to the expectation setting, {where Generalized SPABA matches the lower bound in \cite{arjevani2023lower} and SRMBA is near-optimal under existing bounds.
See Tables~\ref{tab:finite} and~\ref{table:exp} for details.}
\item \textbf{A unified and sharp convergence and complexity analysis.}
We provide a unified convergence and complexity analysis for PnPBO, applicable to all variants of PnPBO with distinct combinations of stochastic estimators.
Although the stochastic estimators for different variables can be independently plugged in, our theoretical analysis shows that the step size (learning rate) strategies for these variables must be coupled to ensure convergence.
For further details regarding the explicit coupling step size conditions, refer to Theorems \ref{theorem_biased}, \ref{theorem_unbiased} and Remark \ref{section_coupling}. Under this unified framework, we can derive convergence results for all proposed algorithms.

\item \textbf{Empirical validation.}
We validate our framework empirically, demonstrating its effectiveness on several benchmark problems and confirming the theoretical findings.
\end{itemize}
\begin{table}[H]
\centering
\renewcommand{\arraystretch}{1.7}
\begin{center}
\begin{footnotesize}
\begin{tabular}{cccccc}
\hline
Algorithm & Stochastic Estimator &{Sample Complexity} &\makecell{UL, LL}& Gap & Reference \\ \hline
\multicolumn{1}{c}{\multirow{2}{*}{SABA}}
& \multicolumn{1}{c}{\multirow{2}{*}{SAGA}}
&  \makecell{ $\mathcal{O}((n+m)\epsilon^{-1})$}
& $C_L^{1,1}$, $C_L^{2,2}$
& \ding{51}
&\multicolumn{1}{c}{\multirow{2}{*}{\cite{dagreou2022framework}}}
\\
\multicolumn{1}{c}{}
&  \multicolumn{1}{c}{}
&  \makecell{ $\mathcal{O}((n+m)^{2/3}\epsilon^{-1})$}
&  $C_L^{2,2}$,  $C_L^{3,3}$
& \ding{51}
&\multicolumn{1}{c}{}
\\
MA-SABA
&  \makecell{SAGA+MA \& SAGA}
& \makecell{ $\mathcal{O}((n+m)^{{2}/{3}}\epsilon^{-1})$}
& $C_L^{1,1}$, $C_L^{2,2}$
& \ding{55}
& Theorem 3.5$^*$
\\
SRBA
& SARAH
&\makecell{$\mathcal{O}((n+m)^{1/2}\epsilon^{-1})$}
& $C_L^{2,2}$, $C_L^{3,3}$
& \ding{51}
& \cite{dagreou2023lower}
\\
SPABA
& PAGE
&\makecell{$\mathcal{O}((n+m)^{1/2}\epsilon^{-1})$}
& $C_L^{1,1}$, $C_L^{2,2}$
& \ding{55}
& Theorem 3.7$^{*}$\\
SFFBA
& ZeroSARAH
&\makecell{$\mathcal{O}((n+m)^{1/2}\epsilon^{-1})$}
& $C_L^{1,1}$, $C_L^{2,2}$
& \ding{55}
& Theorem \ref{thsffba}\\
MSEBA
& ZeroSARAH \& PAGE
&\makecell{ $\mathcal{O}((n+m)^{1/2}\epsilon^{-1})$}
& $C_L^{1,1}$, $C_L^{2,2}$
& \ding{55}
& Theorem \ref{thmseba}\\
\hline
\multicolumn{6}{c}{ Lower Bound: $\Omega((n+m)^{1/2}\epsilon^{-1})$ \citep{dagreou2023lower}} \\ \hline
\end{tabular}
\end{footnotesize}
\end{center}

\caption{Comparison of stochastic bilevel optimization algorithms in the finite-sum setting. \\
{\footnotesize
	Following \citet{dagreou2023lower}, the finite-sum complexity comparison
	is made in the regime $n+m\lesssim \epsilon^{-2}$. In this regime, the one-time $O(n+m)$ initialization cost is subsumed by
	$O((n+m)^{1/2}\epsilon^{-1})$.
	The symbol $^*$ indicates that this result has been published in our conference paper \citep{chuspaba}.
 \( C_L^{p,p} \) means that for all \( i \in [n] \), \( F_i \in C^p \), and \( \nabla^k F_i \) is Lipschitz continuous for all \( 0 < k \leq p \). The same holds for \( G_j \) for all \( j \in [m] \).
 The symbol \ding{51}  indicates a gap in either sample complexity or required conditions when using this stochastic estimator for single-level versus BLO.
 We omit algorithms such as SOBA\citep{dagreou2022framework}, SUSTAIN \citep{khanduri2021near}, MRBO and VRBO \citep{yang2021provably}, which have a sample complexity of at best \( \tilde{\mathcal{O}}(\epsilon^{-1.5}) \). These algorithms, designed for objective functions in the expectation setting, are further discussed in Section \ref{subsection_expectation}.}
}
\label{tab:finite}
\end{table}

\subsection{Related Work}

\textbf{Stochastic Estimators and Lower Bounds.}
Single-level stochastic optimization has a variety of algorithms, which we refer to as stochastic estimators. A classic method is SGD \citep{4308316,SGD}, which requires \( \mathcal{O}(\epsilon^{-2}) \) sample complexity to reach an \( \epsilon \)-stationary point in non-convex optimization. Variants of SGD, such as reshuffled versions \citep{NIPS2016_c74d97b0} and other modifications \citep{lan2020first}, have been proposed to improve its performance. While SGD is widely used due to its simplicity, its variance remains constant.
To address this issue, \cite{NIPS2013_ac1dd209} introduced variance reduction techniques and proposed SVRG, which inspired more advanced algorithms like SAG \citep{NIPS2012_905056c1}, SAGA \citep{saga}, and STORM \citep{cutkosky2019momentum}.

The finite-sum structure is widely applied in single-level optimization to develop first-order algorithms that converge faster.
Several methods achieve optimal sample complexity, including SARAH \citep{pham2020proxsarah},
SPIDER \citep{fang2018spider}, SNVRG \citep{JMLR:v21:18-447},
PAGE \citep{li2021page}, ZeroSARAH \citep{li2021zerosarah},
 SILVER \citep{pmlr-v235-oko24a}, and SpiderBoost \citep{NEURIPS2019_512c5cad}. The sample complexity lower bound \( \Omega(\sqrt{n}\epsilon^{-1}) \) is established in \citep{fang2018spider,pmlr-v97-zhou19b}.

\vspace{5pt}
\noindent\textbf{Bilevel Optimization Methods.}
Other advances in stochastic bilevel optimization include: \cite{li2024provably} incorporated a without-replacement sampling strategy into SOBA. Recently, several algorithms using only first-order information have been proposed \citep{kwon2023fully, yang2023achieving, chen2023near}. However, these algorithms either lack non-asymptotic convergence guarantees, have less competitive convergence rates compared to our results,
or are limited to deterministic settings. \cite{NEURIPS2022_1a82986c, gu2021optimizing} proposed zeroth-order algorithms, but they do  not have a convergence rate guarantee.
\cite{shen2025penalty, huang2023momentum, kwon2024on} proposed stochastic algorithms for bilevel optimization with a non-strongly convex LL objective.
\section{Preliminaries}
\subsection{Notations}
Let $N=n+m$.
To simplify the notation,
we introduce the following notation
$D^x_{k;I,J}: = \frac{1}{|I|}\sum_{i \in I} \nabla_1 F_i(x_k, y_k) - \frac{1}{|J|}\sum_{j \in J} \nabla_{12}^2 G_j(x_k, y_k) z_k,$
where \( I \) and \( J \) are the sets of indices sampled for functions \( f \) and \( g \) at the $k$-th iteration.
Similarly, \(D^y_{k;J} \) and \(D^z_{k;I,J} \) are defined.
Additionally, let $y^*_k:=y^*(x_k)$ and $z^*_k:=z^*(x_k)$.
The clipping function on $z$ with radius $R>0$ is defined as
$\mathrm{Clip}(z; R)=\min \{1, {R}/{\|z\|}\} \cdot z.$

\subsection{Assumptions}
We present the assumptions for the BLO problem under study.
\begin{assumption}\label{assump UL}
\begin{itemize}
\item[(a)] For any $x$, there exists $C^f>0$ such that $\|\nabla_2 f(x,y^*(x))\|\leq C^f$;
\item[(b)]
For all $i\in [n]$, the function $\nabla F_i(x,y)$ is $L^f$-Lipschitz continuous in $(x,y)$;
\item[(c)] The function $H(x)$ is bounded from below, i.e.,
	\(
	H_*
	:=
	\inf_{x\in\mathbb R^{d_x}} H(x)
	>
	-\infty
	\).
\end{itemize}
\end{assumption}

\begin{assumption}\label{assump LL}
\begin{itemize}
    \item[(a)] For any $x$, $g(x,\cdot)$ is $\mu$-strongly convex;
    \item[(b)] For all $j\in [m]$, the functions $\nabla G_j(x,y)$ and $\nabla^2G_j(x,y)$ are $L^g_1$ and $L^g_2$ Lipschitz continuous in $(x,y)$, respectively.
\end{itemize}
\end{assumption}
\begin{remark}
Assumptions \ref{assump UL}(b) and \ref{assump LL}(b)
are stochastic conditions that many algorithms rely on to achieve enhanced complexity results \citep{fang2018spider, li2021page, dagreou2022framework}. They imply that \( \nabla f(x, y) \), \( \nabla g(x, y) \), and \( \nabla^2 g(x, y) \) are Lipschitz continuous with \( L^f \), \( L^g_1 \), and \( L^g_2 \), respectively.
\end{remark}

\subsection{Hypergradient Decoupling and Single-Loop Algorithm}

{We revisit the decoupling approach
to address the challenge of computing the hypergradient.
For a given $x$, we compute an approximation $y$ of the LL minimizer
\(
y^*(x)\in\arg\min_{y} g(x,y).
\)
Let the  inverse Hessian-vector product
\(
\bigl[\nabla_{22}^2 g\bigl(x, y^*(x)\bigr)\bigr]^{-1}\nabla_2 f\bigl(x, y^*(x)\bigr)
\)
be denoted by $z^*(x)$.
To obtain a computable approximation of $z^*(x)$ in a single-loop scheme, we introduce an implicit variable $z$, which is computed as an inexact solution to the following quadratic minimization problem evaluated at the available pair $(x,y)$:
{%
\setlength{\abovedisplayskip}{4pt}
\setlength{\belowdisplayskip}{4pt}
\setlength{\abovedisplayshortskip}{3pt}
\setlength{\belowdisplayshortskip}{1pt}
{\[
\min_{z\in\mathbb{R}^{d_y}} \ \frac{1}{2}\left\langle \nabla_{22}^2 g\bigl(x,y\bigr) z,\, z \right\rangle
-\left\langle \nabla_2 f\bigl(x,y\bigr),\, z \right\rangle .
\]}
}}

The process of solving the BLO problem can be viewed as consisting of three update modules: one for the UL variable \( x \), one for the LL variable \( y \), and one for the implicit variable \( z \).
Based on a warm-start strategy, the single-loop decoupling algorithms \citep{arbel2022amortized, dagreou2022framework, chen2024optimal} are structured around these update directions and can be summarized as follows.
\begin{algorithm}[h]
\begin{algorithmic}[1]
   \FOR{$k=0$ {\bfseries to} $K-1$}
    \STATE Update $x_{k+1} = x_{k}-\alpha_k \tilde{D}^x_k$,

    \quad where $\tilde{D}^x_k$ is an approximation of $D^x_k:= \nabla_1 f(x_k, y_k) - \nabla_{12}^2 g(x_k, y_k) z_k$;

   \STATE Update $ y_{k+1} = y_{k}-\beta_k \tilde{D}^y_k$,

   \quad where  $\tilde{D}^y_k$ is an approximation of $D^y_k:=\nabla_2 g(x_k, y_k)$;

   \STATE Update $z_{k+1} = z_{k}-\gamma_k \tilde{D}^z_k,$

   \quad where $\tilde{D}^z_k$ is an approximation of $D^z_k:=\nabla_{22}^2 g(x_k, y_k) z_k - \nabla_2 f(x_k, y_k)$.

   \ENDFOR
\end{algorithmic}
\end{algorithm}
\section{Method}\label{section_framework}
In this section, we introduce a unified plug-and-play framework, PnPBO, enabling the independent integration of modern stochastic estimators and offering guidance on their usage.
Furthermore, based on this framework, we propose two new algorithms as illustrative examples.
\subsection{PnPBO: A Unified Plug-and-Play Framework}\label{section_pnpbo}

We now introduce PnPBO, a flexible and unified framework for stochastic bilevel optimization, as outlined in Algorithm \ref{alg_framework}. PnPBO consists of three modules:
Lines 4-6 for updating the UL variable \( x \),
Lines 7-8 for updating the LL variable \( y \), and
Lines 9-10 for updating the implicit variable \( z \).
These modules operate in parallel and are independent.
Unlike existing algorithms that rely on specific variance reduction techniques, this framework supports the independent integration of different stochastic estimators $\mathcal{A}$, $\mathcal{B}$ and $\mathcal{C}$. They can be either unbiased or biased.

 For the implicit variable \( z \), we apply clipping with a radius \( R \) in Line 10, where {\( R:=C^f/\mu \)} serves as the upper bound for \(\| z^*(x) \|\) in Lemma \ref{Ly*}. 
 {This technique has been introduced and used in bilevel optimization \citep{pmlr-v202-hu23d,dagreou2023lower}  to control the implicit iterates \(z_k\), which allows the analysis to proceed without an a priori boundedness assumption on \(z\).}
From a computational perspective, clipping is widely adopted in deep learning practice \citep{Zhang2020Why,10614379}, and it does not introduce significant additional computation.

When \( \mathcal{A} \) is an unbiased estimator, we recommend employing the MA technique.
{Importantly, unbiasedness of \( \mathcal{A} \) (as an estimator of \(D_k^x\)) does not in general imply an unbiased hypergradient estimate, since \(D_k^x\) itself is an approximation  of \(\nabla H(x_k)\), so that in general \(\mathbb{E}[\hat{v}_k^x]\neq \nabla H(x_k)\).
We therefore introduce MA by incorporating the historical direction \(v_{k-1}^x\) into the current update to reduce the hypergradient estimation error and stabilize the \(x\)-update, which allows a larger step size \(\alpha_k\) and leads to improved sample complexity.}

\begin{algorithm}[H]
  \caption{
   PnPBO: A Unified Plug-and-Play Framework}
  \label{alg_framework}
\begin{algorithmic}[1]
   \STATE {\bfseries Input:} Initializations $v^x_{-1}$, $\rho_{-1}$, $(x_{-1},y_{-1},z_{-1})$ and $(x_{0},y_{0},z_{0})$, number of total iterations $K$,
   stochastic estimators $\mathcal{A}$, $\mathcal{B}$ and $\mathcal{C}$, step sizes $\{\alpha_k\}$, $\{\beta_k\}$, $\{\gamma_k\}$,
   MA weight $\{\rho_k\}$,  clipping radius $R$.

   \FOR{$k=0$ {\bfseries to} $K-1$}
   \STATE Sample $\mathcal{S}_k^f$ for $f$ and $\mathcal{S}_k^g$ for $g$;

    \STATE Compute an estimate \( \hat{v}_k^x \) of \( D^{x}_k\) using the stochastic estimator \( \mathcal{A} \) with \( \mathcal{S}_k^f \) and \( \mathcal{S}_k^g \);
    \STATE \begin{eqnarray*}
v_k^x=
\begin{cases}
\hat{v}_k^x, & \text{if \( \mathcal{A} \) is a biased estimator of \( D^{x}_k\)} ; \\
(1-\rho_{k-1})v_{k-1}^x + \rho_{k-1} \hat{v}_k^x, & \text{if \( \mathcal{A} \) is an unbiased estimator of \( D^{x}_k\)};
\end{cases}
\end{eqnarray*}
   \STATE Update
   \begin{equation*}
       x_{k+1} \leftarrow x_{k}-\alpha_k v_k^x;
   \end{equation*}

   \STATE Compute an estimate \( {v}_k^y \) of \( D^{y}_k\) using the stochastic estimator \( \mathcal{B} \) with \( \mathcal{S}_k^g \);
   \STATE Update
   \begin{equation*}
       y_{k+1} \leftarrow y_{k}-\beta_k v_k^y;
   \end{equation*}

   \STATE Compute an estimate \( {v}_k^z \) of \( D^{z}_k\) using the stochastic estimator \( \mathcal{C} \) with \( \mathcal{S}_k^f \) and \( \mathcal{S}_k^g \);
   \STATE Update
   \begin{equation*}
          z_{k+1} \leftarrow {\rm{Clip}} (z_{k}-\gamma_k v_k^z;R).
   \end{equation*}
   \ENDFOR
\end{algorithmic}
\end{algorithm}

\subsection{Instantiation of PnPBO}
We present two algorithms as specific implementations of PnPBO. Below, we describe the stochastic estimators used and the resulting estimates of the three update directions after independent embedding.

\subsubsection{SFFBA: Stochastic Full gradient Free Bilevel Algorithm}\label{section_alg_sffba}

We introduce a new algorithm, called SFFBA, which achieves optimal sample complexity in the finite-sum setting.
SFFBA is an adaptation of the ZeroSARAH algorithm \citep{li2021zerosarah}, originally designed for non-convex optimization, and tailored for the bilevel setting.
The key advantage of this stochastic estimator is that it eliminates the need for multiple full gradient evaluations. We apply it to all three variable update modules.

Let \( {w}_{k,i}=(w_{k,i}^x, w_{k,i}^y) \) for \( i \in [n] \) and \(\tilde{w}_{k,j} =(\tilde{w}_{k,j}^x, \tilde{w}_{k,j}^y, \tilde{w}_{k,j}^z) \) for \( j \in [m] \) represent two memory variables, corresponding to the calls to functions \( f \) and \( g \).
For each iteration $k\geq1$, we sample sets \( I \subset [n] \) and \( J \subset [m] \) for \( f \) and \( g \) with a minibatch size of \( b \).
The update rules for \(w_{k,i} \) are as follows: if \( i \in I \), then \( w_{k,i} \) is updated to \( (x_k, y_k) \); if \( i \notin I \), \( w_{k,i} \) retains its previous value \( w_{k-1,i} \).
Similarly, for \( \tilde{w}_{k,j}\), the update rule follows the same structure.

The update direction for the variable \( x \) is $v_k^x=\hat{v}_k^x$ with
 {
     \setlength{\abovedisplayskip}{1pt} 
     \setlength{\belowdisplayskip}{3pt}
\begin{align*}
    \hat{v}_k^x=&(1-\bar{\rho}_k)\left(v_{k-1}^x-D^x_{k-1;I,J}\right)
    +D^x_{{k};I,J}
    +\bar{\rho}_k\left(\hat{D}^x_{k-1;[n],[m]}-\hat{D}^x_{k-1;I,J}
    \right),
\end{align*}}
where $\bar{\rho}_k$ is the momentum parameter, and we define the notation as
{%
\setlength{\abovedisplayskip}{4pt}
\setlength{\belowdisplayskip}{3pt}
\setlength{\abovedisplayshortskip}{3pt}
\setlength{\belowdisplayshortskip}{1pt}
\begin{align*}
 \hat{D}^x_{k;I,J} :=&
    \frac{1}{|I|}\sum_{i\in I}\nabla_1 F_i(w_{k,i}^x, w_{k,i}^y) - \frac{1}{|J|}\sum_{j\in J}\nabla_{12}^2 G_j(\tilde{w}_{k,j}^x, \tilde{w}_{k,j}^y) \tilde{w}_{k,j}^z.
\end{align*}}
Similarly for \( \hat{D}^y_{k;J} \) and \( \hat{D}^z_{k;I,J} \).
The update directions for $y$ and $z$ follow a similar approach
\begin{align}
    v_k^y=&(1-\bar{\rho}_k)\left(v_{k-1}^y-D^y_{k-1;J}\right)
    +D^y_{k;J}
    +\bar{\rho}_k\left(\hat{D}^y_{k-1;[m]}-\hat{D}^y_{k-1;J}
    \right),\label{zero_y}
    \\
    v_k^z=&(1-\bar{\rho}_k)\left(v_{k-1}^z-D^z_{k-1;I,J}\right)
    +D^z_{k;I,J}
    +\bar{\rho}_k\left(\hat{D}^z_{k-1;[n],[m]}-\hat{D}^z_{k-1;I,J}
    \right).\nonumber
\end{align}

Following ZeroSARAH \citep[Algorithm~2 and Corollaries~1--2]
{li2021zerosarah}, two initialization settings can be considered.
The first uses a full batch at $k=0$. Specifically, for SFFBA, we use
the one-time full initialization, for all $i\in[n]$ and $j\in[m]$,
\begin{equation}\label{e-SFFBA-full-initialization}
w_{0,i}=(x_0,y_0),\quad
\widetilde w_{0,j}=(x_0,y_0,z_0),\quad
v_0^x=\hat v_0^x=D_0^x,\quad
v_0^y=D_0^y,\quad
v_0^z=D_0^z.
\end{equation}
No full-gradient evaluation is required thereafter.
The second uses a minibatch at $k=0$ and avoids full initialization,
as in ZeroSARAH \citep[Corollary~2]{li2021zerosarah}. In this setting, the initial update
directions are computed from the sampled minibatch rather than the full
data set.

\subsubsection{
MSEBA: Multiple Stochastic Estimators Bilevel Algorithm}

Next, we introduce MSEBA, an algorithm that integrates different stochastic estimators, including PAGE and ZeroSARAH.
This algorithm demonstrates the flexibility of PnPBO and provides an example for its plug-and-play implementation.

We choose the stochastic estimators \( \mathcal{A} \) and \( \mathcal{C} \) for the UL variable \( x \) and implicit variable \( z \) update modules, respectively, as PAGE. Specifically, for $k\geq1$,  the updates are as follows
\begin{align*}
v_k^x=&
\hat{v}_k^x =
\begin{cases}
D^x_{k;[n],[m]}, & \text{w.p. } p, \\
v_{k-1}^{x} + D^x_{k;I,J} - D^x_{k-1;I,J}
, & \text{w.p. } 1-p.
\end{cases}
\end{align*}
That is, with probability (w.p.) \( p \), we compute the full gradient \( D^x_k \), and w.p. \( 1-p \), we update the previous iteration’s direction using the samples.
The update direction for $z$ is constructed analogously: with probability
$p$, $v_k^z=D_k^z$, and with probability $1-p$,
$v_k^z=v_{k-1}^z+D_{k;I,J}^z-D_{k-1;I,J}^z$.
The stochastic estimator \( \mathcal{B} \) is chosen as ZeroSARAH, and the construction of its update direction is given by \eqref{zero_y}.

One initialization setting for MSEBA is the following one-time full
initialization, where $\widetilde w_{0,j}$ is initialized for all
$j\in[m]$:
\begin{equation}\label{e-MSEBA-full-initialization}
v_0^x=\hat v_0^x=D_0^x,\quad
v_0^y=D_0^y,\quad
v_0^z=D_0^z,\quad
\widetilde w_{0,j}=(x_0,y_0,z_0).
\end{equation}
For the ZeroSARAH estimator $\mathcal B$, the minibatch initialization
described in Section~3.2.1 can alternatively be used at $k=0$, thereby
avoiding full initialization of $\mathcal B$.

\begin{remark}
    As established in Section \ref{section_mseba}, MSEBA achieves optimal sample complexity in the finite-sum setting. This algorithm serves as an example to demonstrate the theoretical feasibility of integrating different stochastic estimators into the three update modules, highlighting the flexibility of PnPBO.
   It encourages users to select estimators based on problem-specific characteristics or practical outcomes.
\end{remark}

{\section{Theoretical Analysis Framework}\label{section_Convergence_Analysis_Framework}}

In this section, we present a unified and sharp convergence and complexity analysis framework.
Such a framework would help clarify how the variance properties of stochastic estimators, combined with step size conditions, contribute to the final convergence rates and sample complexities.
The algorithm framework PnPBO and its convergence analysis framework are illustrated in Figure \ref{fig:main}.
\begin{figure}[H]
    \centering
    \includegraphics[width=1\linewidth]{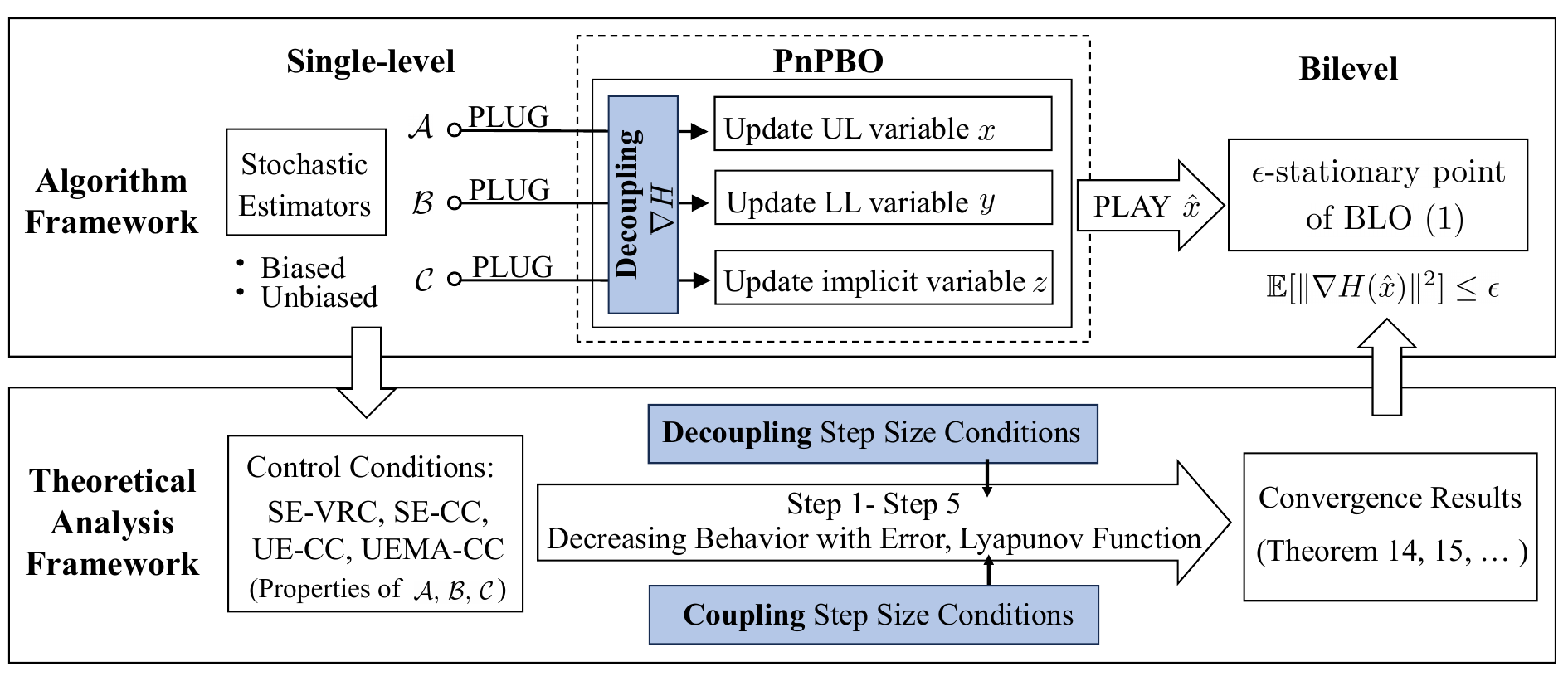}

    \caption{PnPBO and Unified Analysis Framework Schematic}
    \label{fig:main}
\end{figure}

\subsection{On the Variance Properties of Stochastic Estimators}\label{section_single}

In this subsection, we recall a generic template for single-level stochastic estimators and their variance properties.
These statements will be repeatedly invoked when we analyze the same estimators after embedding them into the bilevel framework.
Specifically, we consider the single-level non-convex problem
\(
\min_u h(u) := \frac{1}{Q}\sum_{q=1}^{Q} h_q(u).
\)

The stochastic estimator \(v_k\) is typically constructed using the current iterate \(u_k\), the sampled set \(P_k\)
and historical information such as \(\{(v_i,u_i)\}_{i=0}^{k-1}\).
Some estimators additionally maintain explicit memory variables \(\tilde{u}_{k-1}=(\tilde{u}_{k-1}^{1},\ldots,\tilde{u}_{k-1}^{Q})\), which are maintained and updated along the iterations based on the iterate and sampling histories.
Accordingly, the algorithm is driven by the recursion
\[v_k = V(\{\nabla h_q(u_k)\}_{q\in P_k}, \{v_i\}_{i=0}^{k-1}, \{u_i\}_{i=0}^{k-1}, \tilde{u}_{k-1}, \ldots ),\quad
u_{k+1} = u_k - a_k v_k,
\]
where \(a_k\) is the step size and other auxiliary variables are updated by prescribed rules.

We work with the following abstract mean-squared estimation error recursion for the estimator $\{v_k\}$:
\begin{equation}\label{lemma_single_vr}
\begin{aligned}
    \mathbb{E}[\|v_{k+1}-\nabla h(u_{k+1})\|^2]\leq &\theta_k\mathbb{E}[\|v_k-\nabla h(u_k)\|^2]
    +\eta_k'
    \mathbb{E}[\|\nabla h_{{q}_{k+1}}(u_{k+1})-\nabla h_{{q}_{k+1}}(u_{k})\|^2]
  \\
    &
    +\tau_k'\frac{1}{Q}\sum_{q=1}^Q\mathbb{E}[\|\nabla h_{{q}}(u_{k})-\nabla h_{{q}}(\tilde{u}_k^{{q}})\|^2]
    +\Delta_k,
 \end{aligned}
 \end{equation}
where $\theta_k\in[0,1)$ and $\eta_k'\ge 0$ may depend on the batch size $p_{k+1}:=|P_{k+1}|$.
Let $q_{k+1}$ be sampled uniformly from $P_{k+1}$ (given $P_{k+1}$).
The second term on the right-hand side of \eqref{lemma_single_vr} captures the local change of a sampled component gradient along the update, and we keep it in a single-sample form to facilitate the subsequent bilevel analysis.
Assume further that each $h_q$ has an $L_h$-Lipschitz gradient. Then
\[
\eta_k'\,\mathbb{E}\bigl[\|\nabla h_{q_{k+1}}(u_{k+1})-\nabla h_{q_{k+1}}(u_k)\|^2\bigr]
\le L_h^2\eta_k' a_k^2\,\mathbb{E}\bigl[\|v_k\|^2\bigr]
=: \eta_k a_k^2\,\mathbb{E}\bigl[\|v_k\|^2\bigr].
\]
If \(v_k\) depends on memory variables, define the mean-squared staleness
\(
\delta_k:=\frac{1}{Q}\sum_{q=1}^Q \mathbb{E}\bigl[\|u_k-\tilde u_k^q\|^2\bigr].
\)
By $L_h$-smoothness, the third term on the right-hand side of \eqref{lemma_single_vr} (the memory term) is bounded by $\tau_k \delta_k$ with $\tau_k:=\tau_k' L_h^2$.
This mean-squared memory staleness \(\delta_k\) typically satisfies a contraction-type recursion up to additional terms:
\begin{align}\label{lemma_single_memory}
    \delta_{k+1}\leq \lambda_k \delta_k
    + \hat{\eta}_k
    \mathbb{E}[\|u_{k+1}-u_k\|^2]
   + \hat{\eta}_k'a_k^2
    \mathbb{E}[\|\nabla h(u_k)\|^2],
\end{align}
where $\lambda_k\in[0,1)$ and the coefficients in the above inequality may depend on the size of the data set $Q$.
The term \(\Delta_k\) collects the method-dependent residual variance not captured by the other terms (e.g., the noise in plain SGD).

Moreover, for an unbiased estimator with \(\mathbb{E}[v_k]=\nabla h(u_k)\), there exist nonnegative sequences \(\{\mathrm{Coef}_k\}_{k\ge 0}\) and \(\{\mathrm{Coef}'_k\}_{k\ge 0}\) such that
\begin{equation}\label{condition_single_unbiased}
\begin{aligned}
&\mathrm{Coef}_{k+1}\theta_k-\mathrm{Coef}_k
\le -2a_k^2, \quad
\mathrm{Coef}_{k+1}\tau_k + \mathrm{Coef}'_{k+1}\lambda_k-\mathrm{Coef}'_{k} \le 0.
\end{aligned}
\end{equation}
For a biased estimator, there exist nonnegative sequences \(\{{\mathrm{Coef}}_k^b\}_{k\ge 0}\) and \(\{{\mathrm{Coef}}^{b'}_k\}_{k\ge 0}\) such that
\begin{equation}\label{condition_single_biased}
\begin{aligned}
&\mathrm{Coef}^b_{k+1}\theta_k-\mathrm{Coef}^b_k
\le -2a_k, \quad
\mathrm{Coef}^b_{k+1}\tau_k + \mathrm{Coef}^{b'}_{k+1}\lambda_k-\mathrm{Coef}^{b'}_{k} \le 0.
\end{aligned}
\end{equation}
The inequalities  \eqref{condition_single_unbiased}-\eqref{condition_single_biased} express compatibility between step sizes and Lyapunov coefficients, which will be summarized as control conditions next.
{Appendix~\ref{sec:single_instantiations} provides instantiations of \eqref{lemma_single_vr}--\eqref{condition_single_biased} for several standard estimators.}

\subsection{Control Conditions}\label{section_control_conditions}

We summarize the single-level template in \eqref{lemma_single_vr}--\eqref{condition_single_biased} into a set of abstract control conditions, which will be used as the interface between concrete stochastic estimators and our bilevel framework PnPBO.
We begin by introducing the SE-VRC  (Stochastic Estimator Variance Recursion Control) condition that characterizes the variance behavior of stochastic estimators.
This condition applies to both biased and unbiased estimators.
    \begin{condition}[SE-VRC condition]
     A stochastic estimator is said to satisfy the SE-VRC condition if it exhibits properties of the form \eqref{lemma_single_vr} and \eqref{lemma_single_memory} in the corresponding single-level stochastic optimization problem.
\end{condition}

Under the SE-VRC condition, we introduce coefficient control conditions, including the SE-CC condition (Stochastic Estimator Coefficient Control) for biased estimators and the UE-CC condition (Unbiased Estimator Coefficient Control) for unbiased estimators. These conditions relate the step size $a_k$, Lyapunov coefficients, and variance-related coefficients.

\begin{condition}[SE-CC condition]
    A stochastic estimator $\mathcal{A}$ is said to satisfy the SE-CC condition with $\{A,\ A'\}$ if there exist sequences of nonnegative coefficients $A = \{ A_k \}$ and $A' = \{ A'_k \}$ such that \eqref{condition_single_biased} hold, i.e. for $k \geq 0$:
\[
\begin{aligned}
A_{k+1}\theta_k^{\mathcal{A}}-A_k
\le -2a_k,\quad
A_{k+1}\tau_k^{\mathcal{A}} + A'_{k+1}\lambda_k^{\mathcal{A}} - A'_k \le 0.
\end{aligned}
\]
\end{condition}
\begin{condition}[UE-CC condition]
    A stochastic estimator $\mathcal{A}$ is said to satisfy the UE-CC condition with $\{A,\ A'\}$ if there exist sequences of nonnegative coefficients $A = \{ A_k \}$ and $A' = \{ A'_k \}$ such that \eqref{condition_single_unbiased} hold, i.e. for $k \geq 0$:
    \begin{align*}
    A_{k+1}\theta_k^{\mathcal{A}} - A_k
    \leq -2a_k^2, \quad
    A_{k+1}\tau_k^{\mathcal{A}}
    + A'_{k+1}\lambda_k^{\mathcal{A}} - A'_{k}
    \leq 0.
    \end{align*}
 \end{condition}

Furthermore, our framework involves unbiased estimators combined with MA, with momentum parameter
\(\rho_k\in(0,1]\).
In the Lyapunov analysis, the MA update introduces an additional deviation term, which is controlled by an extra Lyapunov coefficient \(A''\). This motivates the following UEMA-CC (Unbiased Estimator MA Coefficient Control) condition.
\begin{condition}[UEMA-CC condition]
 We say that a stochastic estimator $\mathcal{A}$ satisfies the UEMA-CC condition with $\{A,\ A',\ A''\}$ if there exist sequences of nonnegative coefficients $A = \{ A_k \}$, $A' = \{ A'_k \}$ and $A'' = \{ A''_k \}$ such that
for $k \geq 0$, the following inequalities hold:

\begin{gather*}
    \rho_k^2A_{k+1}''\theta_k^{\mathcal{A}}
+A_{k+1}\theta_k^{\mathcal{A}}-A_k
\leq 0,
\quad
    \frac{a_k}{2} +A_{k+1}''\left(1-\rho_k\right)-A_{k}''
    \leq 0,
    \\
    \rho_k^2 A''_{k+1} \tau_k^{\mathcal{A}} + \tau_k^{\mathcal{A}} A_{k+1} + \lambda_k^{\mathcal{A}} A'_{k+1} - A'_k \leq 0,
\end{gather*}
where \(\rho_k\in(0,1]\) denotes the moving average weight.
\end{condition}

\begin{remark}
 (Discussion of the Control conditions)

(1) The SE-VRC condition characterizes the variance property that stochastic estimators should possess in single-level stochastic optimization problems.
Based on this, we can establish the validity of the inequalities in the BLO setting, specifically \eqref{lemma_vr_a}–\eqref{lemma_memory_a} and \eqref{lemma_vr_b}–\eqref{lemma_memory_c}.

 (2) The SE-CC, UE-CC, and UEMA-CC conditions provide coefficient relations between step sizes, Lyapunov coefficients, and variance-related coefficients, ensuring that the SE-VRC recursions can be combined into a telescoping Lyapunov decrease in the bilevel analysis.
\end{remark}

\subsection{Analysis Steps of Framework}
This subsection provides the five-step roadmap used throughout the paper.
Steps 1--4 establish monotone descent behaviors with error terms, depending on whether the stochastic estimators are biased or unbiased.
These behaviors can be expressed as recursive inequalities in conditional expectation
\[
\mathbb{E}\big[\widetilde{D}_{k+1} \,|\, \mathcal{F}_k\big] + \Lambda_k \leq \omega_k \widetilde{D}_k + \Omega_k,
\]
where \( \widetilde{D}_k \), \( \Lambda_k \), and \( \Omega_k \) are nonnegative quantities, and \( \omega_k \in [0,1] \) is a contraction factor.
 Step 5 constructs a Lyapunov function, which integrates the lemmas from Steps 1-4 to derive the final convergence results.

Specifically, as illustrated in Figure \ref{fig:map}: (1) Starting from {the total objective function $H(x)$} in Step 1, and based on Lemma \ref{H}, the errors in the monotonic decreasing behavior of $H(x_k)$ include the gap between the UL variable update direction and the true hypergradient, i.e., \( \mathbb{E}[\|\nabla H(x_k)-v_k^x\|^2] \).
This gap is analyzed case by case in  Corollary \ref{H_biased} and Lemma \ref{lemma_ma}, depending on whether \( \mathcal{A} \) is biased or unbiased.
(2) The approximation errors \( \mathbb{E}[\|y_k - y^*_k\|^2] \) and \( \mathbb{E}[\|z_k - z^*_k\|^2] \) appear in the errors of the descent behavior derived in Step 1 (as shown in Corollary \ref{H_biased} and Lemma \ref{lemma_ma}). We further analyze these errors separately in Step 2 and Step 3.
(3) In Step 4, we control the variance term by examining the performance of the stochastic estimators when embedded in our bilevel setting.
(4) Finally, in Step 5, we construct an appropriate Lyapunov function to integrate the above analyses and derive the convergence and complexity results.
Next, we will provide the details for each step.
\begin{figure}
    \centering
\includegraphics[width=1\linewidth]{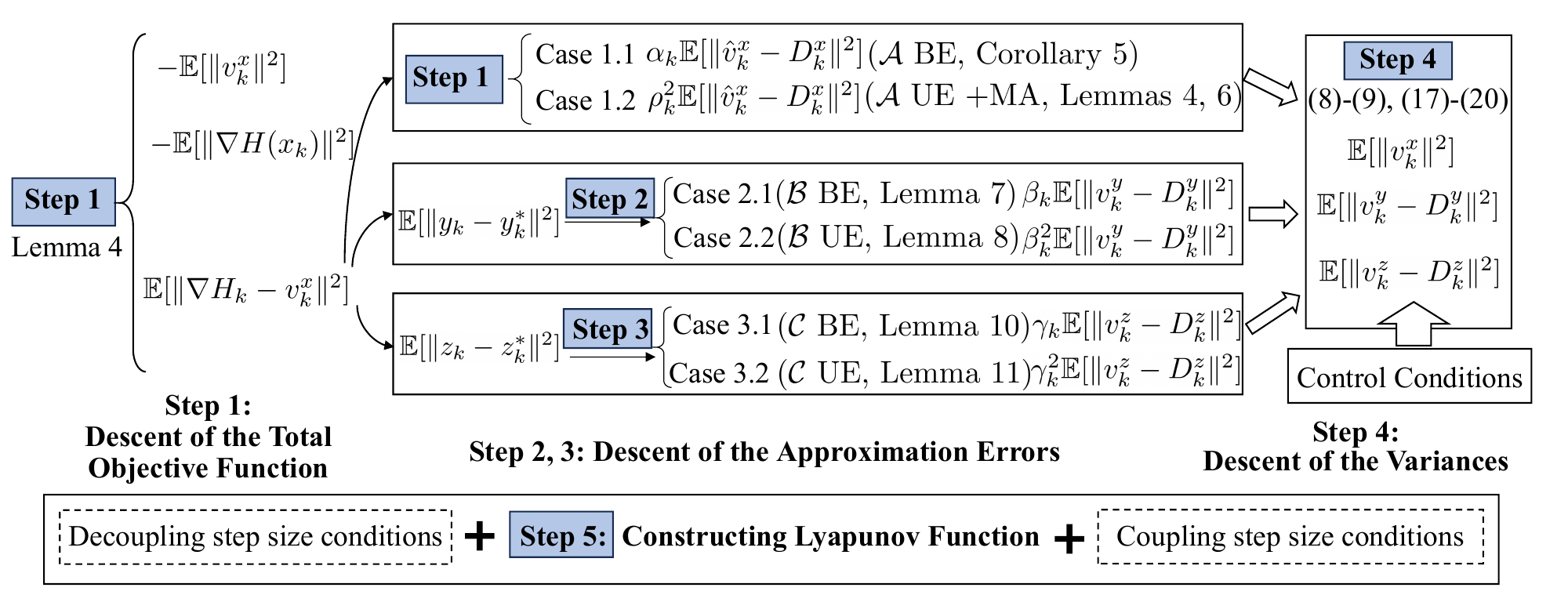}

    \caption{Roadmap of the Analysis Framework.
    {\footnotesize Above, ``BE'' represents biased stochastic estimator and ``UE'' represents unbiased stochastic estimator. In the figure, we have kept only the key terms from the formulas.}}
    \label{fig:map}
\end{figure}

\vspace{10pt}
\noindent\textit{\textbf{Step 1: Descent of the Total Objective Function \( H(x) \)}}

Under Assumptions \ref{assump UL} and \ref{assump LL}, \( H \) is \( L^H \)-smooth (Lemma \ref{Hsmooth}). Based on this, we can characterize the descent behavior of the overall objective function with errors, as described in the following lemma.

\begin{lemma}\label{H}
Suppose Assumptions \ref{assump UL} and \ref{assump LL} hold, and $\alpha_k\leq 1/(2L^H)$. Then we have
\begin{align*}
\mathbb{E}\left[H\left(x_{k+1}\right)\right]
\leq {}& \mathbb{E}\left[H\left(x_k\right)\right]
-\frac{\alpha_k}{2} \mathbb{E}\left[\left\|\nabla H\left(x_k\right)\right\|^2\right]\\
&-\frac{\alpha_k}{4}\mathbb{E}\left[\left\|v_k^x\right\|^2\right]
+\frac{\alpha_k}{2} \mathbb{E}\left[\left\|\nabla H\left(x_k\right)-v_k^x\right\|^2\right].
 \end{align*}
 \end{lemma}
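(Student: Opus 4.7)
The plan is to apply the standard descent inequality for $L^H$-smooth functions, using the fact that $H$ is $L^H$-smooth (as established in Lemma \ref{Hsmooth}). Since the update rule in Algorithm \ref{alg_framework} is $x_{k+1} = x_k - \alpha_k v_k^x$, the smoothness inequality gives
\begin{equation*}
H(x_{k+1}) \leq H(x_k) - \alpha_k \langle \nabla H(x_k), v_k^x \rangle + \frac{L^H \alpha_k^2}{2}\|v_k^x\|^2.
\end{equation*}

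Next, I would rewrite the inner product using the polarization identity $-\langle a,b\rangle = \tfrac{1}{2}\|a-b\|^2 - \tfrac{1}{2}\|a\|^2 - \tfrac{1}{2}\|b\|^2$ applied with $a=\nabla H(x_k)$ and $b=v_k^x$. This splits the cross term into the three quadratic quantities appearing in the lemma statement, producing
\begin{equation*}
H(x_{k+1}) \leq H(x_k) - \frac{\alpha_k}{2}\|\nabla H(x_k)\|^2 - \frac{\alpha_k}{2}(1 - L^H \alpha_k)\|v_k^x\|^2 + \frac{\alpha_k}{2}\|\nabla H(x_k) - v_k^x\|^2.
\end{equation*}

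Then I would use the step size restriction $\alpha_k \leq 1/(2L^H)$, which guarantees $1 - L^H \alpha_k \geq 1/2$, so the coefficient $-\tfrac{\alpha_k}{2}(1-L^H\alpha_k)$ is bounded above by $-\tfrac{\alpha_k}{4}$. Taking total expectation on both sides yields exactly the stated inequality. There is no real obstacle here: the result is the classical smoothness-plus-polarization descent lemma, with the only subtlety being the bookkeeping to isolate the $\|v_k^x\|^2$ term so that the coefficient can be absorbed by the step size condition. No stochastic assumption beyond $L^H$-smoothness (which is deterministic) is needed in this step, and the expectation is taken at the end purely to match the form used in subsequent steps of the framework.
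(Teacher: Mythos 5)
Your proof is correct and is the standard smoothness-plus-polarization descent argument; the paper defers its proof of this lemma to Lemma D.7 of the conference version \citep{chuspaba}, which proceeds in essentially the same way. The bookkeeping checks out: the coefficient of $\|v_k^x\|^2$ is $-\tfrac{\alpha_k}{2}(1-L^H\alpha_k)\leq -\tfrac{\alpha_k}{4}$ under $\alpha_k\leq 1/(2L^H)$, and taking total expectation gives exactly the stated inequality.
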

Notably, Lemma \ref{H} does not depend on the type of stochastic estimator \( \mathcal{A}\).
To further quantify how the term \(\mathbb{E}\!\left[\|\nabla H(x_k)-v_k^x\|^2\right]\) affects the descent of \(H\), we establish two refined descent results, distinguishing whether \(\mathcal{A}\) is biased or unbiased.
\begin{itemize}[left=0pt, topsep=5pt, partopsep=0pt, itemsep=0pt]
    \item[]  \textit{\textbf{Case 1.1:} \( \mathcal{A} \) is a biased stochastic estimator.}

    We decompose the gap between the hypergradient and the iterative direction of
$x$, $\mathbb{E}[\|\nabla H(x_k)-v_k^x\|^2]$,  {into two terms}:
 $\mathbb{E}[\|\nabla H(x_k)-D^x_k\|^2]$ and $\mathbb{E}[\|v^x_k-D_k^x\|^2]$, leading to the following corollary.
 \begin{corollary}\label{H_biased}
 Suppose Assumptions \ref{assump UL} and \ref{assump LL} hold, and $\alpha_k\leq 1/(2L^H)$. Then we have
{
    \setlength{\abovedisplayskip}{3pt} 
    \setlength{\belowdisplayskip}{3pt}
 \begin{align*}
\mathbb{E}[H(x_{k+1})]
\leq &
\mathbb{E}[H\left(x_k\right)]
-\frac{\alpha_k}{2} \mathbb{E}\left[\left\|\nabla H\left(x_k\right)\right\|^2\right]
 -\frac{\alpha_k}{4} \mathbb{E}\left[\left\|v_k^x\right\|^2\right]
+{\alpha_k} \mathbb{E}\left[\left\|v^x_k-D_k^x\right\|^2\right]
\\&+3c_1{\alpha_k}  \mathbb{E}\left[\left\|y_k-y^*_k\right\|^2\right]
+3c_2{\alpha_k}  \mathbb{E}\left[\left\|z_k-z^*_k\right\|^2\right]
,
\end{align*}}
where $c_1=\left(L^f\right)^2+\left(L^g_2 R\right)^2$, $c_2=(L_1^g)^2$.
 \end{corollary}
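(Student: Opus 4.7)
The plan is to start from Lemma \ref{H} and focus entirely on controlling the error term $\mathbb{E}[\|\nabla H(x_k) - v_k^x\|^2]$, since everything else in the inequality carries over directly. First I would insert the intermediate quantity $D_k^x = \nabla_1 f(x_k,y_k) - \nabla_{12}^2 g(x_k,y_k) z_k$ and apply the elementary inequality $\|a+b\|^2 \le 2\|a\|^2 + 2\|b\|^2$ to obtain
\begin{equation*}
\tfrac{\alpha_k}{2}\mathbb{E}[\|\nabla H(x_k)-v_k^x\|^2] \le \alpha_k \mathbb{E}[\|\nabla H(x_k)-D_k^x\|^2] + \alpha_k \mathbb{E}[\|v_k^x-D_k^x\|^2].
\end{equation*}
The second term already appears in the claim with the right coefficient, so the whole job reduces to bounding the deterministic bias $\|\nabla H(x_k) - D_k^x\|^2$ by the approximation errors $\|y_k - y_k^*\|^2$ and $\|z_k - z_k^*\|^2$.

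For that, I would use the explicit expressions $\nabla H(x_k) = \nabla_1 f(x_k,y_k^*) - \nabla_{12}^2 g(x_k,y_k^*) z_k^*$ and $D_k^x = \nabla_1 f(x_k,y_k) - \nabla_{12}^2 g(x_k,y_k) z_k$, then decompose the difference into three Lipschitz-controllable pieces by adding and subtracting $\nabla_{12}^2 g(x_k,y_k) z_k^*$:
\begin{align*}
\nabla H(x_k) - D_k^x
&= \bigl[\nabla_1 f(x_k,y_k^*) - \nabla_1 f(x_k,y_k)\bigr] \\
&\quad - \bigl[\nabla_{12}^2 g(x_k,y_k^*) - \nabla_{12}^2 g(x_k,y_k)\bigr] z_k^* \\
&\quad - \nabla_{12}^2 g(x_k,y_k)(z_k^* - z_k).
\end{align*}
Applying $\|a+b+c\|^2 \le 3(\|a\|^2+\|b\|^2+\|c\|^2)$ will then split the bound across the three terms.

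Each of the three pieces is now a standard Lipschitz estimate under Assumptions \ref{assump UL} and \ref{assump LL}: the first is bounded by $L^f \|y_k^* - y_k\|$, the third by $L^g_1 \|z_k^* - z_k\|$, and the middle one by $L^g_2 \|y_k^* - y_k\| \cdot \|z_k^*\|$. To absorb $\|z_k^*\|$ into a constant I would invoke Lemma \ref{Ly*}, which gives $\|z^*(x)\| \le R$, yielding the $(L^g_2 R)^2$ contribution. Grouping the two $\|y_k^* - y_k\|^2$ terms gives the coefficient $3c_1$ with $c_1 = (L^f)^2 + (L^g_2 R)^2$, and the remaining term gives $3c_2$ with $c_2 = (L^g_1)^2$, matching the claimed bound exactly.

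The argument is essentially a mechanical decomposition, so there is no deep obstacle. The only subtlety worth flagging is that the middle term requires a uniform bound on $\|z_k^*\|$ rather than on the iterate $\|z_k\|$; this is precisely why the decomposition must introduce $\nabla_{12}^2 g(x_k,y_k) z_k^*$ (not $\nabla_{12}^2 g(x_k,y_k) z_k$) as the pivot, so that the factor multiplying the Hessian-difference is $z_k^*$, which is controlled a priori by Lemma \ref{Ly*}. Had the pivot been chosen differently, one would need the separate argument that the clipping step in Algorithm \ref{alg_framework} keeps $\|z_k\|$ bounded, which is available but unnecessary here.
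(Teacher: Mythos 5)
Your proposal is correct and follows essentially the same route as the paper: Lemma \ref{H} plus the two-term split $\|\nabla H(x_k)-v_k^x\|^2 \le 2\|\nabla H(x_k)-D_k^x\|^2 + 2\|v_k^x-D_k^x\|^2$, with the first term bounded by $3c_1\|y_k-y_k^*\|^2 + 3c_2\|z_k-z_k^*\|^2$ exactly as in inequality \eqref{DH} of Lemma \ref{dL}, which the paper invokes directly (deferring its proof to the conference version) while you rederive it via the standard three-term Lipschitz decomposition with $\nabla_{12}^2 g(x_k,y_k)z_k^*$ as pivot. The coefficients and the use of $\|z^*(x)\|\le R$ from Lemma \ref{Ly*} all check out.
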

\end{itemize}

\vspace{2pt}
\begin{itemize}[left=0pt, topsep=10pt, partopsep=0pt, itemsep=0pt]
    \item[]  \textit{\textbf{Case 1.2:} \( \mathcal{A} \) is an unbiased stochastic estimator.}

When \(\mathcal{A}\) is an unbiased stochastic estimator, combining it with the MA technique yields the following lemma.
\begin{lemma}\label{lemma_ma}
Supposing that Assumptions \ref{assump UL} and \ref{assump LL} hold and that \( \mathcal{A} \) is an unbiased stochastic estimator.
Then we have
\begin{align*}
&\mathbb{E}[\|v^x_{k+1}-\nabla H(x_{k+1})\|^2]
-\mathbb{E}[\|v^x_k-\nabla H(x_k)\|^2]\\
\leq{}&
-\rho_k \mathbb{E}\left[\left\|v^x_k-\nabla H\left(x_k\right)\right\|^2\right]
+ \rho_k^2\mathbb{E}\left[\left\| \hat{v}_{k+1}^x-  D^x_{k+1}\right\|^2\right]\\
&+\frac{3 \alpha_k^2\left(L^H\right)^2+12c_1 \rho_k^2\alpha_k^2}{\rho_k}
\mathbb{E}\left[\left\|v^x_k\right\|^2\right]\\
&+12\rho_k c_1 \beta_k^2\mathbb{E}\left[\left\|v_k^y\right\|^2\right]
+12\rho_k c_2\gamma_k^2\mathbb{E}\left[\left\|v_k^z\right\|^2\right]\\
&+9\rho_k c_1\mathbb{E}\left[\left\|y_k-y^*_k\right\|^2\right]
+9\rho_k c_2\mathbb{E}\left[\left\|z_k-z^*_k\right\|^2\right].
\end{align*}
\end{lemma}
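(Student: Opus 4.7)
The strategy exploits the MA recursion $v^x_{k+1} = (1-\rho_k)v^x_k + \rho_k\hat v^x_{k+1}$ together with the unbiasedness of $\mathcal{A}$: conditional on the natural filtration $\mathcal{F}_{k+1}$ containing $(x_{k+1},y_{k+1},z_{k+1})$ but not the samples $\mathcal{S}^f_{k+1},\mathcal{S}^g_{k+1}$ drawn at iteration $k{+}1$, one has $\mathbb{E}[\hat v^x_{k+1}\mid\mathcal{F}_{k+1}]=D^x_{k+1}$. First I would subtract $\nabla H(x_{k+1})$ from both sides of the MA update and split the result as $v^x_{k+1}-\nabla H(x_{k+1})=A+B+C+N$, where $A=(1-\rho_k)(v^x_k-\nabla H(x_k))$, $B=(1-\rho_k)(\nabla H(x_k)-\nabla H(x_{k+1}))$, $C=\rho_k(D^x_{k+1}-\nabla H(x_{k+1}))$, and $N=\rho_k(\hat v^x_{k+1}-D^x_{k+1})$. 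Since $A,B,C$ are $\mathcal{F}_{k+1}$-measurable and $\mathbb{E}[N\mid\mathcal{F}_{k+1}]=0$, taking the conditional expectation of the squared norm eliminates all cross terms involving $N$ and yields
\[
\mathbb{E}\bigl[\|v^x_{k+1}-\nabla H(x_{k+1})\|^2\,\big|\,\mathcal{F}_{k+1}\bigr] = \|A+B+C\|^2 + \rho_k^2\,\mathbb{E}\bigl[\|\hat v^x_{k+1}-D^x_{k+1}\|^2\,\big|\,\mathcal{F}_{k+1}\bigr],
\]
which already produces the $\rho_k^2\,\mathbb{E}[\|\hat v^x_{k+1}-D^x_{k+1}\|^2]$ term in the statement.

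Next I would apply Young's inequality to $\|A+(B+C)\|^2$ with parameter $\rho_k/(1-\rho_k)$ to obtain
\[
\|A+B+C\|^2 \leq (1-\rho_k)\|v^x_k-\nabla H(x_k)\|^2 + \rho_k^{-1}\|B+C\|^2,
\]
which supplies the required $-\rho_k\|v^x_k-\nabla H(x_k)\|^2$ decrement once $\mathbb{E}\|v^x_k-\nabla H(x_k)\|^2$ is moved to the right. A further Young split of $\|B+C\|^2$, tuned to produce a factor of $3$ on $\|B\|^2$, together with the $L^H$-smoothness estimate (Lemma \ref{Hsmooth}) $\|B\|^2\leq (L^H)^2\alpha_k^2\|v^x_k\|^2$ then yields the leading $3\alpha_k^2(L^H)^2/\rho_k\cdot\|v^x_k\|^2$ contribution and leaves a residual of order $\rho_k\|D^x_{k+1}-\nabla H(x_{k+1})\|^2$.

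To control this residual, I would use the explicit formulas for $\nabla H$ and $D^x_{k+1}$ to write
\[
D^x_{k+1}-\nabla H(x_{k+1}) = \bigl(\nabla_1 f(x_{k+1},y_{k+1})-\nabla_1 f(x_{k+1},y^*_{k+1})\bigr) - \nabla^2_{12}g(x_{k+1},y_{k+1})(z_{k+1}-z^*_{k+1}) - \bigl(\nabla^2_{12}g(x_{k+1},y_{k+1})-\nabla^2_{12}g(x_{k+1},y^*_{k+1})\bigr)z^*_{k+1},
\]
and apply Assumptions \ref{assump UL}(b) and \ref{assump LL}(b) together with $\|z^*_{k+1}\|\leq R$ (Lemma \ref{Ly*}) to conclude $\|D^x_{k+1}-\nabla H(x_{k+1})\|^2\leq 3c_1\|y_{k+1}-y^*_{k+1}\|^2 + 3c_2\|z_{k+1}-z^*_{k+1}\|^2$. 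I would then expand $y_{k+1}=y_k-\beta_k v^y_k$ and use the Lipschitz continuity of $y^*$ (constant $L^g_1/\mu$) to compare $y^*_{k+1}$ with $y^*_k$, and do the analogous expansion for $z_{k+1}$ and $z^*_{k+1}$, pushing the error back to iterate-$k$ quantities. Taking the total expectation and regrouping delivers the stated bound, with the $\alpha_k^2\|v^x_k\|^2$ piece arising from the Lipschitz motion of $y^*,z^*$ being absorbed into the second summand of $(3\alpha_k^2(L^H)^2+12c_1\rho_k^2\alpha_k^2)/\rho_k$.

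The hard part will be constant bookkeeping: reaching the exact coefficients $9$ and $12$, and getting the numerator $3(L^H)^2+12c_1\rho_k^2$ rather than a larger composite, requires a coordinated choice of the Young parameters at both the $A+B+C$ split and the subsequent $y_{k+1}-y^*_{k+1}$ and $z_{k+1}-z^*_{k+1}$ expansions, so that no slack is lost along the way; moreover, one must make sure the Lipschitz constants of $y^*$ and $z^*$ are bundled into the composite constant $L^H$ (which Lemma \ref{Hsmooth} already does) so that they do not appear as independent constants in the final expression.
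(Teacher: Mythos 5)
Your overall architecture matches the paper's: use the MA recursion together with the conditional unbiasedness of \(\hat v^x_{k+1}\) to peel off the \(\rho_k^2\,\mathbb{E}[\|\hat v^x_{k+1}-D^x_{k+1}\|^2]\) term, a Young/convexity step to obtain the \((1-\rho_k)\) contraction, \(L^H\)-smoothness for the \(\alpha_k^2(L^H)^2/\rho_k\) contribution, and control of the \(D^x\)-versus-\(\nabla H\) residual through \(\|y-y^*\|\) and \(\|z-z^*\|\). The gap is in how you handle that residual. You anchor it at iteration \(k+1\): you bound \(\rho_k\|D^x_{k+1}-\nabla H(x_{k+1})\|^2\) by \(3c_1\|y_{k+1}-y^*_{k+1}\|^2+3c_2\|z_{k+1}-z^*_{k+1}\|^2\) and then transport \(y^*_{k+1},z^*_{k+1}\) back to iteration \(k\) via the Lipschitz continuity of \(y^*\) and \(z^*\). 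That transport necessarily produces terms of the form \(\rho_k c_1 L_{y^*}^2\alpha_k^2\|v^x_k\|^2\) and \(\rho_k c_2 L_{z^*}^2\alpha_k^2\|v^x_k\|^2\), and these cannot be ``bundled into \(L^H\)'' as you hope: the \(3\alpha_k^2(L^H)^2/\rho_k\) piece of the stated coefficient comes solely from \(\|\nabla H(x_{k+1})-\nabla H(x_k)\|^2\), and the remaining \(12c_1\rho_k^2\alpha_k^2/\rho_k=12c_1\rho_k\alpha_k^2\) carries no \(L_{y^*}\) or \(L_{z^*}\) factor. Moreover, to keep the coefficient of \(\|y_k-y^*_k\|^2\) at \(9\rho_k c_1\) you are forced to take the Young parameter equal to \(1\) when expanding \(y_{k+1}-y^*_{k+1}\), which pushes the coefficient of \(\beta_k^2\|v^y_k\|^2\) to at least \(18\rho_k c_1\) rather than \(12\rho_k c_1\). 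So your route yields a structurally weaker inequality, not the stated one---and the exact constants matter here because they feed directly into the coupling step-size conditions \eqref{coupling_unbiased_1}--\eqref{coupling_unbiased_4}.

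The paper avoids this by splitting the residual at iteration \(k\): it writes the deterministic part of the error as \((1-\rho_k)(v^x_k-\nabla H(x_k))+(\nabla H(x_k)-\nabla H(x_{k+1}))+\rho_k(D^x_k-\nabla H(x_k))+\rho_k(D^x_{k+1}-D^x_k)\), applies convexity with weights \((1-\rho_k,\rho_k)\), then the inequality \(\|a+b+c\|^2\le 3(\|a\|^2+\|b\|^2+\|c\|^2)\), and bounds \(\|D^x_k-\nabla H(x_k)\|^2\) by \eqref{DH} at iteration \(k\) (no transport of \(y^*\) needed) and \(\|D^x_{k+1}-D^x_k\|^2\) by Lemma \ref{Dx{k+1}-Dx{k}}, which uses only the Lipschitz continuity of \(\nabla_1 F_i\) and \(\nabla^2_{12}G_j\) together with the clipping bound on \(z_{k+1}\), and hence yields \(4c_1(\alpha_k^2\|v^x_k\|^2+\beta_k^2\|v^y_k\|^2)+4c_2\gamma_k^2\|v^z_k\|^2\) with no \(L_{y^*},L_{z^*}\). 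Your argument becomes the paper's once you rewrite \(D^x_{k+1}-\nabla H(x_{k+1})=(D^x_{k+1}-D^x_k)+(D^x_k-\nabla H(x_k))+(\nabla H(x_k)-\nabla H(x_{k+1}))\) and let the last piece merge with your term \(B\).
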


\end{itemize}

\vspace{10pt}
\noindent\textit{\textbf{Step 2: Descent of the Approximation Error \( \mathbb{E}[\|y_k - y^*_k\|^2] \)}}

In this step, we evaluate the error from approximating \( y^*_k \) with \( y_k \).
We construct the corresponding lemmas separately, depending on whether \( \mathcal{B} \) is a biased or unbiased estimator.

\begin{itemize}[left=0pt, topsep=5pt, partopsep=0pt, itemsep=0pt]
    \item[]  \textit{\textbf{Case 2.1:} \( \mathcal{B} \) is a biased stochastic estimator.}
\begin{lemma}\label{y-y*_biased}
Suppose Assumptions \ref{assump UL} and \ref{assump LL} hold, \( \mathcal{B} \) is a biased stochastic estimator and the step size  satisfies
$\beta_k\leq\overline{c}$
.
Then we have
\begin{align*}
\mathbb{E}\left[\left\|y_{k+1}-y^*_{k+1}\right\|^2\right]
\leq{}&
\left(1-c_3 \beta_k\right)\mathbb{E}\left[\left\|y_k-y^*_k\right\|^2\right]
+ \frac{c_5\alpha_k^2}{\beta_k}\mathbb{E}\left[\left\|v_k^x\right\|^2\right]\\
&+c_4\beta_k\mathbb{E}\left[\| v^y_k-D_k^y\|^2\right],
\end{align*}
where
 $\overline{c}=\min\left\{\frac{\mu+L_1^g}{\mu L_1^g},\frac{1}{\mu+L_1^g}\right\}
 ,\,c_3=\frac{\mu L_1^g}{2(\mu+L_1^g)},\,c_4=6\frac{\mu+L_1^g}{\mu L_1^g},\,c_5=\frac{2(\mu+L_1^g)L_{y^*}^2}{\mu L_1^g}.$
\end{lemma}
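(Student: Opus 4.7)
\textbf{Proof proposal for Lemma \ref{y-y*_biased}.}

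The plan is to decompose $y_{k+1}-y^*_{k+1}$ by inserting intermediate points that separate (i) the idealized contraction of the LL update using the true $D_k^y$, (ii) the bias of the stochastic estimator $v_k^y$, and (iii) the drift of the optimal response $y^*(x)$ induced by the UL step. Concretely, I would write
\[
y_{k+1}-y^*_{k+1} \;=\; \bigl(y_k-\beta_k D_k^y - y^*_k\bigr) \;-\; \beta_k\bigl(v_k^y - D_k^y\bigr) \;+\; \bigl(y^*_k - y^*_{k+1}\bigr),
\]
and apply Young's inequality twice: once to split off the bias term $-\beta_k(v_k^y-D_k^y)$ with a parameter $\eta>0$, and once to split off the drift term $y^*_k-y^*_{k+1}$ with a parameter $\epsilon>0$. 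This yields
\[
\|y_{k+1}-y^*_{k+1}\|^2 \leq (1+\eta)(1+\epsilon)\|y_k-\beta_k D_k^y - y^*_k\|^2 + (1+\eta)(1+\tfrac{1}{\epsilon})\|y^*_k-y^*_{k+1}\|^2 + (1+\tfrac{1}{\eta})\beta_k^2\|v_k^y-D_k^y\|^2.
\]

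First I would bound the deterministic one-step contraction term. Since $g(x_k,\cdot)$ is $\mu$-strongly convex with $L^g_1$-Lipschitz gradient and $D_k^y=\nabla_2 g(x_k,y_k)$, the classical co-coercivity bound gives, for $\beta_k\leq 2/(\mu+L_1^g)$,
\[
\|y_k-\beta_k D_k^y - y^*_k\|^2 \leq \Bigl(1-\tfrac{2\mu L_1^g}{\mu+L_1^g}\beta_k\Bigr)\|y_k-y^*_k\|^2.
\]
Next, the drift term is handled by the Lipschitz continuity of the solution mapping $y^*$ (with constant $L_{y^*}$, established earlier in the paper as Lemma \ref{Ly*}-type), giving $\|y^*_k-y^*_{k+1}\|\leq L_{y^*}\alpha_k\|v_k^x\|$. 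Taking total expectation (no unbiasedness of $v_k^y$ is required, which is why the Young's decomposition is essential here), the three resulting coefficients are
\[
(1+\eta)(1+\epsilon)\bigl(1-\tfrac{2\mu L_1^g}{\mu+L_1^g}\beta_k\bigr), \qquad (1+\eta)(1+\tfrac{1}{\epsilon})L_{y^*}^2\alpha_k^2, \qquad (1+\tfrac{1}{\eta})\beta_k^2.
\]

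The key remaining step is to tune $\eta$ and $\epsilon$ to produce exactly the stated constants $c_3,c_4,c_5$. I would pick $\eta$ and $\epsilon$ both proportional to $c\beta_k$ with $c=\mu L_1^g/(\mu+L_1^g)$; under $\beta_k\leq \overline{c}$ (so that $c\beta_k\leq 1$ and $\beta_k\leq 2/(\mu+L_1^g)$), the product $(1+\eta)(1+\epsilon)(1-2c\beta_k)$ collapses to at most $1-c_3\beta_k$ after absorbing the $O(\beta_k^2)$ cross terms. The other two coefficients then reduce to a multiple of $\alpha_k^2/\beta_k$ and $\beta_k$, respectively, matching $c_5\alpha_k^2/\beta_k$ and $c_4\beta_k$ up to the factors $2/c$ and $6/c$ stated in the lemma.

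The main obstacle I anticipate is not conceptual but bookkeeping: balancing the two Young's parameters so that the contraction factor stays clean as $1-c_3\beta_k$ while simultaneously squeezing the error coefficients into the tight forms $c_4$ and $c_5$, and verifying that the step-size bound $\overline{c}=\min\{(\mu+L_1^g)/(\mu L_1^g),\,1/(\mu+L_1^g)\}$ is exactly what is needed to validate both the gradient-descent contraction and the $(1+1/\eta)$, $(1+1/\epsilon)$ simplifications. Once the $\eta,\epsilon$ schedule is fixed, the rest is direct algebra and taking expectations.
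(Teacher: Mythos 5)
Your proposal is correct and follows essentially the same route as the paper's proof (which is deferred to Lemma F.3 of the conference version \citep{chuspaba}): decompose $y_{k+1}-y^*_{k+1}$ into the exact gradient-descent step, the estimator bias $-\beta_k(v_k^y-D_k^y)$, and the drift $y^*_k-y^*_{k+1}$; contract the first via co-coercivity, bound the drift via the $L_{y^*}$-Lipschitzness of $y^*$ from Lemma \ref{Ly*}, and absorb the bias with Young's inequality since $v_k^y$ need not be unbiased. The only caveat is the bookkeeping you already flagged: taking $\eta$ and $\epsilon$ both equal to $c\beta_k/2$ overshoots the stated $c_5=2L_{y^*}^2/c$, but the constants do close once you use that $\beta_k\leq\overline{c}$ forces $c\beta_k\leq \mu L_1^g/(\mu+L_1^g)^2\leq 1/4$ (not merely $\leq 1$), or equivalently once you assign the larger Young parameter to the split that isolates the drift term.
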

\end{itemize}

\vspace{3pt}
\begin{itemize}[left=0pt, topsep=10pt, partopsep=0pt, itemsep=0pt]
    \item[]  \textit{\textbf{Case 2.2:} \( \mathcal{B} \) is an unbiased stochastic estimator.}
 \begin{lemma}\label{y-y*_unbiased}
Suppose Assumptions \ref{assump UL} and \ref{assump LL} hold, \( \mathcal{B} \) is an unbiased stochastic estimator and  the step size satisfies $\beta_k\leq1/(\mu+L^g_1).$
Then we have
{
    \setlength{\abovedisplayskip}{3pt} 
    \setlength{\belowdisplayskip}{3pt}
\begin{align*}
\mathbb{E}\left[\left\| y_{k+1}-y^*_{k+1}\right\|^2\right]
 \leq&(1-\beta_k \mu) \mathbb{E}\left[\left\|y_k-y^*_k\right\|^2\right] +\frac{ c_6\alpha_k^2}{\beta_k } \mathbb{E}\left[\left\|v_k^x\right\|^2\right]
+2 \beta_k^2\mathbb{E}\left[\left\|v^y_k-D_k^y\right\|^2\right],
\end{align*}}
where $c_6={2 L_{y^*}^2 }/{\mu}$.
\end{lemma}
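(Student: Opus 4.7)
\vspace{5pt}

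The plan is to mimic the standard contraction analysis for SGD on a strongly convex objective, but with an extra Young's inequality that accounts for the drift of the moving target $y^*(x_k)$ caused by the update of $x$.

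First I would split
\[
y_{k+1}-y^*_{k+1} \;=\; \bigl(y_{k+1}-y^*_k\bigr) + \bigl(y^*_k - y^*_{k+1}\bigr)
\]
and apply the deterministic Young's inequality $\|a+b\|^2 \le (1+\eta)\|a\|^2 + (1+1/\eta)\|b\|^2$ for a parameter $\eta>0$ to be chosen, then take total expectation. The drift term is handled immediately using the Lipschitz continuity of $y^*$ (with constant $L_{y^*}$, available from the preliminary lemmas) together with $x_{k+1}-x_k=-\alpha_k v_k^x$, yielding $\|y^*_k-y^*_{k+1}\|^2 \le L_{y^*}^2\alpha_k^2\|v_k^x\|^2$.

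Next I would analyze the inner term by conditioning on $\mathcal{F}_k$. Writing $y_{k+1}-y^*_k = (y_k - y^*_k - \beta_k D_k^y) - \beta_k(v_k^y-D_k^y)$ and using the unbiasedness $\mathbb{E}[v_k^y\mid \mathcal{F}_k]=D_k^y=\nabla_2 g(x_k,y_k)$, the cross term vanishes in conditional expectation, giving
\[
\mathbb{E}\bigl[\|y_{k+1}-y^*_k\|^2\mid \mathcal{F}_k\bigr] = \|y_k-y^*_k-\beta_k\nabla_2 g(x_k,y_k)\|^2 + \beta_k^2 \mathbb{E}\bigl[\|v_k^y-D_k^y\|^2\mid \mathcal{F}_k\bigr].
\]
For the deterministic part, I would use $\nabla_2 g(x_k,y^*_k)=0$ together with the co-coercivity inequality for the $\mu$-strongly convex and $L_1^g$-smooth function $g(x_k,\cdot)$; under the step size condition $\beta_k\le 1/(\mu+L_1^g)\le 2/(\mu+L_1^g)$ this yields the contraction factor $1-\tfrac{2\beta_k\mu L_1^g}{\mu+L_1^g}$ on $\|y_k-y^*_k\|^2$. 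Since $L_1^g\ge \mu$, this factor is at most $1-\beta_k\mu$, but one needs a strictly smaller raw factor to absorb the $(1+\eta)$ blow-up from Young's inequality.

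The main obstacle, and really the only nontrivial piece, is choosing $\eta$ so that the three target constants $(1-\beta_k\mu)$, $2\beta_k^2$, and $c_6\alpha_k^2/\beta_k$ with $c_6=2L_{y^*}^2/\mu$ all come out simultaneously. Concretely, I would pick $\eta$ of the order $\beta_k\mu$, so that $(1+\eta)\le 2$ (delivering the factor $2\beta_k^2$ on the variance), while $(1+\eta)(1-\tfrac{2\beta_k\mu L_1^g}{\mu+L_1^g})\le 1-\beta_k\mu$ (using the slack between $\tfrac{2L_1^g}{\mu+L_1^g}$ and $1$ together with the step size restriction to bound the quadratic-in-$\beta_k$ remainder), and $1+1/\eta = O(1/(\beta_k\mu))$, so that $(1+1/\eta)L_{y^*}^2\alpha_k^2\le (2L_{y^*}^2/\mu)\cdot \alpha_k^2/\beta_k = c_6\alpha_k^2/\beta_k$. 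Plugging these bounds back and taking total expectation delivers exactly the stated inequality.
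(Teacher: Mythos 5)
Your proposal is correct and follows the standard argument that the paper defers to (Lemma E.2 of the conference version \citep{chuspaba}): Young's inequality to separate the drift $\|y^*_k-y^*_{k+1}\|^2\le L_{y^*}^2\alpha_k^2\|v_k^x\|^2$, the conditional bias--variance split using unbiasedness of $v_k^y$, and the co-coercivity contraction for the deterministic gradient step. The only point requiring care is your parenthetical appeal to ``the slack between $\tfrac{2L_1^g}{\mu+L_1^g}$ and $1$'', which vanishes when $L_1^g=\mu$; the slack that actually saves the argument in all cases is the one you mention second, namely that $\beta_k\le 1/(\mu+L_1^g)$ is half the co-coercivity threshold $2/(\mu+L_1^g)$, which upgrades the raw factor to $1-\beta_k\mu\,\tfrac{2L_1^g+\mu}{\mu+L_1^g}\le 1-\tfrac{3}{2}\beta_k\mu$ via $\|\nabla_2 g(x_k,y_k)\|\ge\mu\|y_k-y^*_k\|$, after which the concrete choice $\eta=\tfrac{\beta_k\mu}{2-\beta_k\mu}$ delivers exactly the three stated coefficients $(1-\beta_k\mu)$, $2\beta_k^2$, and $c_6\alpha_k^2/\beta_k$ with $c_6=2L_{y^*}^2/\mu$.
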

\end{itemize}

\begin{remark}
The key difference between Lemmas~\ref{y-y*_biased} and \ref{y-y*_unbiased} is the coefficient on \(\mathbb{E}[\|v_k^y-D_k^y\|^2]\), which scales as \(\beta_k\) for biased estimators and as \(\beta_k^2\) for unbiased ones; an analogous pattern holds for Lemmas~\ref{z-z*_biased} and \ref{z-z*_unbiased}.

\end{remark}

\vspace{10pt}
\noindent\textit{\textbf{Step 3: Descent of the Approximation Error \( \mathbb{E}[\|z_k - z^*_k\|^2] \)}}

Similar to Step 2, we derive the following two lemmas based on the type of \( \mathcal{C} \) to measure the approximation error between \( z_k \) and \( z^*_k \).

\begin{itemize}[left=0pt, topsep=5pt, partopsep=0pt, itemsep=0pt]
    \item[]  \textit{\textbf{Case 3.1:} \( \mathcal{C} \) is a biased stochastic estimator.}
\begin{lemma}\label{z-z*_biased}
Suppose Assumptions \ref{assump UL} and \ref{assump LL} hold, \( \mathcal{C} \) is a biased stochastic estimator and the step size  satisfies
$\gamma_k\leq \overline{c}$
.
Then we have
{%
\setlength{\abovedisplayskip}{4pt}
\setlength{\belowdisplayskip}{4pt}
\setlength{\abovedisplayshortskip}{2pt}
\setlength{\belowdisplayshortskip}{2pt}
\begin{align*}
		\mathbb{E}\left[\left\|z_{k+1}-z^*_{k+1}\right\|^2\right]
		\leq&
		\left(1-{\frac{\mu}{4}}\gamma_k\right)\mathbb{E}\left[\left\|z_k-z^*_k\right\|^2\right]
		{+
		\frac{18c_1\gamma_k}{\mu}\mathbb{E}\left[\left \|y_k-y^*_k\right\|^2\right]}\notag
        \\
		&
        +{c_{10}}\gamma_k\mathbb{E}\left[\| v_k^z-D_k^z\|^2\right]
        + \frac{c_7\alpha_k^2}{\gamma_k }\mathbb{E}\left[\left\|v_k^x\right\|^2\right],
 	\end{align*}}
    where {$c_7=4L_{z^*}^2/\mu$, $c_{10}=12/\mu$.}
\end{lemma}
\end{itemize}
\vspace{3pt}
\begin{itemize}[left=0pt, topsep=10pt, partopsep=0pt, itemsep=0pt]
    \item[]  \textbf{Case 3.2:} \( \mathcal{C} \) is an unbiased stochastic estimator.
    \begin{lemma}\label{z-z*_unbiased}
Suppose Assumptions \ref{assump UL} and \ref{assump LL} hold, \( \mathcal{C} \) is an unbiased stochastic estimator and  the step size satisfies $\gamma_k\leq\min\{1/(10\mu),1/L_1^g\}.$
Then we have
{%
\setlength{\abovedisplayskip}{4pt}
\setlength{\belowdisplayskip}{4pt}
\setlength{\abovedisplayshortskip}{2pt}
\setlength{\belowdisplayshortskip}{2pt}
\begin{align*}
\mathbb{E}\left[\left\|z_{k+1}-z^*_{k+1}\right\|^2\right]
\leq&\left(1-\gamma_k \mu\right) \mathbb{E}\left[\left\|z_k-z^*_k\right\|^2\right]
+c_8\gamma_k \mathbb{E}\left[\left\| y_k-y^*_k\right\|^2\right]
\\&
+2 \gamma_k^2\mathbb{E}\left[\left\| v_k^z-D_k^z\right\|^2\right]
 +\frac{c_9 \alpha_k^2}{\gamma_k } \mathbb{E}\left[\left\|v_k^x\right\|^2\right],
\end{align*}}
where $c_8=8c_1/\mu$, ${c_9=3 L_{z^*}^2}/ {\mu}$.
\end{lemma}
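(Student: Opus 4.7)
The plan is to mirror the strongly convex SGD analysis for the $z$-update, carefully tracking the extra couplings coming from (i) $z^*$ being $x$-dependent and (ii) the bilevel defect $y_k\neq y^*_k$. First, since $\|z^*_{k+1}\|\leq R$ by Lemma \ref{Ly*}, clipping is non-expansive toward $z^*_{k+1}$, giving
\[
\|z_{k+1}-z^*_{k+1}\|^2 \leq \|z_k - \gamma_k v_k^z - z^*_{k+1}\|^2.
\]
I then split $z^*_{k+1} = z^*_k + (z^*_{k+1}-z^*_k)$ and apply Young's inequality with a parameter of order $\gamma_k\mu$, bounding the drift by $\|z^*_{k+1}-z^*_k\|^2 \leq L_{z^*}^2\alpha_k^2\|v_k^x\|^2$ via Lipschitzness of $z^*$. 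Combined with $\gamma_k\leq 1/(10\mu)$, this absorbs into the stated $c_9\alpha_k^2/\gamma_k$ term.

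The central quantity is $\mathbb{E}[\|z_k-\gamma_k v_k^z - z^*_k\|^2\mid \mathcal{F}_k]$. Expanding and using unbiasedness $\mathbb{E}[v_k^z\mid\mathcal{F}_k]=D^z_k$ together with the bias--variance split $\mathbb{E}[\|v_k^z\|^2\mid\mathcal{F}_k] = \|D^z_k\|^2 + \mathbb{E}[\|v_k^z-D^z_k\|^2\mid\mathcal{F}_k]$ directly produces the $2\gamma_k^2\mathbb{E}[\|v_k^z-D^z_k\|^2]$ term in the bound. The remaining deterministic piece is $\|z_k-z^*_k\|^2 - 2\gamma_k\langle D^z_k, z_k-z^*_k\rangle + \gamma_k^2\|D^z_k\|^2$.

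To handle this, I exploit the optimality identity $\nabla_{22}^2 g(x_k,y^*_k)z^*_k = \nabla_2 f(x_k,y^*_k)$ to decompose
\[
D^z_k = \nabla_{22}^2 g(x_k,y_k)(z_k-z^*_k) + E_k,
\]
where $E_k := [\nabla_{22}^2 g(x_k,y_k)-\nabla_{22}^2 g(x_k,y^*_k)]z^*_k + \nabla_2 f(x_k,y^*_k)-\nabla_2 f(x_k,y_k)$. Using Assumptions \ref{assump UL}--\ref{assump LL} and $\|z^*_k\|\leq R$, I get $\|E_k\|\leq (L^g_2 R + L^f)\|y_k-y^*_k\|$, hence $\|E_k\|^2\leq 2c_1\|y_k-y^*_k\|^2$. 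Strong convexity $\nabla_{22}^2 g\succeq \mu I$ yields $\langle \nabla_{22}^2 g(x_k,y_k)(z_k-z^*_k),z_k-z^*_k\rangle\geq \mu\|z_k-z^*_k\|^2$, and co-coercivity lets the $\gamma_k^2\|D^z_k\|^2$ piece be partially absorbed into the contraction. The cross terms involving $E_k$ are split by Young's inequalities whose parameters are tuned so that every $\|z_k-z^*_k\|^2$ coefficient collapses to at most $-\gamma_k\mu$, yielding the $(1-\gamma_k\mu)$ factor, while every $\|y_k-y^*_k\|^2$ coefficient collects to exactly $c_8\gamma_k = 8c_1\gamma_k/\mu$. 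The threshold $\gamma_k\leq 1/(10\mu)$ provides the precise slack needed to complete these absorptions.

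The main obstacle is the simultaneous book-keeping of three competing terms --- $\|z_k-z^*_k\|^2$, $\|y_k-y^*_k\|^2$, and the drift $\|z^*_{k+1}-z^*_k\|^2$ --- while landing on the explicit constants $c_8, c_9$ and the sharp step-size threshold. Compared to the biased case (Lemma \ref{z-z*_biased}), the essential benefit of unbiasedness is that $\mathbb{E}[\langle v_k^z, z_k-z^*_k\rangle\mid \mathcal{F}_k] = \langle D^z_k, z_k-z^*_k\rangle$, so the noise enters only quadratically through the variance, replacing the biased-case coefficient $c_4\gamma_k$ with the tighter $2\gamma_k^2$.
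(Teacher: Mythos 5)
Your overall strategy is the standard one and, as far as the structure goes, it is surely what the authors do (the paper itself defers this proof to Lemma E.2 of the conference version, so no line-by-line comparison is possible here): non-expansiveness of the clipping toward $z^*_{k+1}\in B(0,R)$, a Young split of the drift $\|z^*_{k+1}-z^*_k\|^2\leq L_{z^*}^2\alpha_k^2\|v_k^x\|^2$ with parameter of order $\gamma_k\mu$ to produce $c_9\alpha_k^2/\gamma_k$, the bias--variance split from unbiasedness yielding the $2\gamma_k^2\mathbb{E}[\|v_k^z-D^z_k\|^2]$ term, and the decomposition $D^z_k=\nabla^2_{22}g(x_k,y_k)(z_k-z^*_k)+E_k$ with $\|E_k\|^2\leq 2c_1\|y_k-y^*_k\|^2$ feeding the $c_8\gamma_k\mathbb{E}[\|y_k-y^*_k\|^2]$ term. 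All of these steps are sound and the constants $c_8,c_9$ are attainable from them.

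The gap is in the single sentence where you claim that ``co-coercivity lets the $\gamma_k^2\|D^z_k\|^2$ piece be partially absorbed into the contraction'' and that ``the threshold $\gamma_k\leq 1/(10\mu)$ provides the precise slack needed.'' Co-coercivity of the quadratic with Hessian $A_k:=\nabla^2_{22}g(x_k,y_k)$ gives at best
$-2\gamma_k\langle A_k(z_k-z^*_k),z_k-z^*_k\rangle\leq -\tfrac{2\gamma_k\mu L^g_1}{\mu+L^g_1}\|z_k-z^*_k\|^2-\tfrac{2\gamma_k}{\mu+L^g_1}\|A_k(z_k-z^*_k)\|^2$,
so cancelling the $\gamma_k^2\|A_k(z_k-z^*_k)\|^2$ contribution (plus its cross term with $E_k$) requires $\gamma_k\lesssim 1/(\mu+L^g_1)$ --- exactly the hypothesis of the companion Lemma \ref{y-y*_unbiased} --- and this is \emph{not} implied by $\gamma_k\leq 1/(10\mu)$ when $L^g_1\gg\mu$. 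The point is not cosmetic: in the degenerate case $y_k=y^*_k$, $v_k^z=D^z_k$, $\alpha_k=0$, the bound reduces to $\|(I-\gamma_k A_k)(z_k-z^*_k)\|^2\leq(1-\gamma_k\mu)\|z_k-z^*_k\|^2$, which fails for $\gamma_k=1/(10\mu)$ if $A_k$ has an eigenvalue much larger than $10\mu$ (clipping does not help, since the inflated iterate can remain inside the ball). So either an additional restriction of the form $\gamma_k\leq 2/(\mu+L^g_1)$ (or $\gamma_k\leq\mu/(2L_z^2)$ if one instead bounds $\|D^z_k\|^2$ via Lemma \ref{dL}) must be invoked, or a genuinely different absorption argument must be supplied; your sketch provides neither, and the asserted sufficiency of $\gamma_k\leq 1/(10\mu)$ is where the proof breaks.
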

\end{itemize}

\vspace{10pt}
\noindent\textit{\textbf{Step 4: Descent of the Variances}}

In this section, we characterize the variance behavior of single-level stochastic estimators when they are embedded in our bilevel framework.
Assuming that \( \mathcal{A} \), \( \mathcal{B} \), and \( \mathcal{C} \) in Algorithm~\ref{alg_framework} satisfy the SE-VRC condition in Section~\ref{section_control_conditions}, we establish the corresponding bilevel variance bounds as follows.
These variances include
\( \mathbb{E}[\|\hat{v}_k^x - D^x_k\|^2] \), \( \mathbb{E}[\|v_k^y - D^y_k\|^2] \), and \( \mathbb{E}[\|v_k^z - D^z_k\|^2] \).

\begin{itemize}[left=0pt]
    \item[]  \textit{\textbf{Case 4.1:} }
    For the stochastic estimator \( \mathcal{A} \) in the bilevel setting, we have
\begin{equation}\label{lemma_vr_a}
\begin{aligned}
   \mathbb{E}[\| \hat{v}_{k+1}^x - D^x_{k+1}\|^2]
    \leq &
    \theta_k^{\mathcal{A}} \mathbb{E}[\| \hat{v}_k^x - D^x_{k} \|^2]
    +
    \eta_k^{\mathcal{A}}c_1
    \alpha_k^2\mathbb{E}[\|v_k^x\|^2]
    +\eta_k^{\mathcal{A}} c_1\beta_k^2\mathbb{E}[\|v_k^y\|^2]
   \\& + \eta_k^{\mathcal{A}} c_2\gamma_k^2\mathbb{E}[\|v_k^z\|^2]
    +\tau_k^{\mathcal{A}}\delta_k^{\mathcal{A}}
     + \Delta_k^{\mathcal{A}}.
\end{aligned}
\end{equation}
Here $\delta_k^{\mathcal{A}}$ denotes the mean-squared staleness induced by the memory tables maintained by $\mathcal{A}$, and it satisfies
\begin{equation}\label{lemma_memory_a}
\begin{aligned}
\delta_{k+1}^{\mathcal{A}}\leq &
    \lambda_k^{\mathcal{A}} \delta_k^{\mathcal{A}}
    + \hat{\eta}_k^{\mathcal{A},x}\alpha_k^2
    \mathbb{E}[\|v_k^x\|^2]
    + \hat{\eta}_k^{\mathcal{A},y}\beta_k^2
    \mathbb{E}[\|v_k^y\|^2]
    + \hat{\eta}_k^{\mathcal{A},z}\gamma_k^2
    \mathbb{E}[\|v_k^z\|^2]
    \\
    &+\hat{\eta}_k'^{\mathcal{A},x}\alpha_k^2
   \mathbb{E}[\|D_k^x\|^2]
   +\hat{\eta}_k'^{\mathcal{A},y}\beta_k^2\mathbb{E}[\|D_k^y\|^2]
   +\hat{\eta}_k'^{\mathcal{A},z}\gamma_k^2\mathbb{E}[\|D_k^z\|^2].
\end{aligned}
\end{equation}
Detailed proofs are deferred to Appendix~\ref{appendix_variance}, where we also provide the corresponding
interface inequalities \eqref{lemma_vr_b}--\eqref{lemma_memory_b} for $\mathcal{B}$ and
\eqref{lemma_vr_c}--\eqref{lemma_memory_c} for $\mathcal{C}$.

\end{itemize}

\begin{remark}
In our PnPBO framework, which couples the UL and LL objectives $f$ and $g$
through the variables $(x,y,z)$, embedding a single-level estimator typically leads to recursions that
are slight adaptations of the single-level ones.
Here, we treat \eqref{lemma_vr_a}--\eqref{lemma_memory_a} and \eqref{lemma_vr_b}-\eqref{lemma_memory_c} as abstract interface inequalities for
plug-in estimators: to embed a specific estimator, one may start from its single-level derivation and
establish the corresponding bilevel counterparts term by term under Algorithm~\ref{alg_framework},
thereby identifying the involved parameters; see, for example, Lemma~\ref{lemma_SE-VRC_sffba}.
\end{remark}

\vspace{10pt}
\noindent\textit{\textbf{Step 5: Constructing  Lyapunov Function}}

We use a Lyapunov function to combine the above inequalities. Specifically, we construct the Lyapunov function as follows
\begin{align*}
L_k =
&\mathbb{E}\left[H(x_k)\right]
+A_k \mathbb{E}\left[\|\hat{v}_k^x-D^x_k\|^2\right]
+B_k \mathbb{E}\left[\|v_k^y-D^y_k\|^2\right]
+C_k \mathbb{E}\left[\|v_k^z-D^z_k\|^2\right]
+A'_k \delta_k^{\mathcal{A}}\\
&
+B'_k \delta_k^{\mathcal{B}}
+C'_k \delta_k^{\mathcal{C}}
+A''_k \mathbb{E}\left[\|v^x_k-\nabla H\left(x_k\right)\|^2\right]
+C_y\mathbb{E}\left[\|y_k-y^*_k\|^2\right]
+C_z\mathbb{E}\left[\|z_k-z^*_k\|^2\right]
.
\end{align*}
Here, all the Lyapunov coefficients are nonnegative.
By Assumption~\ref{assump UL}(c),
\(
L_k\geq \mathbb{E}[H(x_k)]\geq H_*.
\)
If \( \mathcal{A} \) is a biased estimator, set \( A''_k = 0 \).

The construction of the above Lyapunov function is analysis-driven: we systematically decouple all error sources arising in the algorithmic recursion until no further decoupling is possible. The underlying logic is as follows.
Since our stationarity criterion is based on the hypergradient $H(x_k)$, we must explicitly control both the \emph{approximation errors} and the \emph{stochastic errors} in hypergradient estimation. The approximation errors include
$\mathbb{E}\!\left[\|y_k-y_k^*\|^2\right]$ and $\mathbb{E}\!\left[\|z_k-z_k^*\|^2\right]$,
together with the additional tracking term
$\mathbb{E}[\|v_k^x - \nabla H(x_k)\|^2]$,
which captures the hypergradient-tracking improvement brought by the MA mechanism in the $x$-module.
The stochastic errors consist of the variance terms
$\mathbb{E}\!\left[\|\hat{v}_k^x-D^x_k\|^2\right]$,
$\mathbb{E}\!\left[\|v_k^y-D^y_k\|^2\right]$,
$\mathbb{E}\!\left[\|v_k^z-D^z_k\|^2\right]$,
as well as the memory (staleness) terms
$\delta_k^{\mathcal{A}}$, $\delta_k^{\mathcal{B}}$, and $\delta_k^{\mathcal{C}}$.

Combine the inequalities presented in Steps 1-4 to provide an upper bound for \( L_{k+1}-L_k \). The inequalities to be selected for estimating each term can be found in Figure \ref{fig:map}.

We choose the Lyapunov coefficients and step sizes so that the decoupling and coupling step size conditions required by our framework are satisfied (see Sections~\ref{sec:th_biased} and \ref{sec:th_unbiased}).
Under Assumptions~\ref{assump UL}--\ref{assump LL} and the corresponding control conditions, this yields
\begin{align*}
    \alpha_k\mathbb{E}\left[\left\|\nabla H(x_k)\right\|^2\right]\leq L_k-L_{k+1}+\widetilde{\Delta}_k,
\end{align*}
where {$\widetilde{\Delta}_k=(A_{k+1}
		+A_{k+1}''\rho_k^2)\Delta_k^{\mathcal{A}}
+B_{k+1}\Delta_k^{\mathcal{B}}
+C_{k+1}\Delta_k^{\mathcal{C}}$
and $\Delta_k^{\mathcal{A}},\Delta_k^{\mathcal{B}},\Delta_k^{\mathcal{C}}$ are given in
\eqref{lemma_vr_a}, \eqref{lemma_vr_b} and \eqref{lemma_vr_c}, respectively.}

Therefore, we can obtain the final convergence result
\begin{align*}
    \inf_{k< K} \mathbb{E}[\|\nabla H\left(x_k\right)\|^2]
   \leq \frac{L_0 - H_*}{K\theta}+\frac{\sum_{k=0}^{K-1}\widetilde{\Delta}_k}{K\theta},
\end{align*}
where $\theta$ is a parameter that depends on $\alpha_k$.
This bound is an abstract template: once the specific estimators are fixed, it suffices to verify the corresponding control conditions and step size relations to obtain an explicit rate.
In the next section, we instantiate our analysis framework and derive the corresponding general convergence theorems.

\section{Theoretical Results}\label{section_Convergence Results}
In this section, we establish general results for the cases where all estimators are biased and where all estimators are unbiased, as presented in Theorems \ref{theorem_biased} and \ref{theorem_unbiased}, respectively.

 Notably, our convergence analysis framework allows for the integration of both biased and unbiased estimators.
 However, due to space constraints, we omit explicit theorems for this combined case.

{\subsection{General Convergence Theorem \ref{theorem_biased}: biased estimators}\label{sec:th_biased}}

In this subsection, we present a general convergence result for the case where all stochastic estimators are biased.

Before stating the main result, we summarize the step size conditions required in Theorem \ref{theorem_biased}.
Specifically, we impose the following decoupling step size conditions
\begin{align}\label{decoupling_biased1}
&\alpha_k\leq \frac{1}{2L^H},
\,
\beta_k\leq\overline{c},
\,
\gamma_k\leq\overline{c},
\,
\left(B_{k+1}\eta_k^{\mathcal{B}}+B'_{k+1}\hat{\eta}_k^{\mathcal{B},y} \right)  \beta_k
    \leq
    M_1,
\,
B'_{k+1}\hat{\eta}_k^{\prime\mathcal{B},y} \beta_k\leq \frac{c_3^2}{36c_2},
\end{align}
and the coupling step size conditions
\begin{subequations}\label{coupling_biased}
\begin{align}
     &\alpha_k
    \leq m_{\alpha\beta}\beta_k,
    \quad
    \alpha_k
    \leq m_{\alpha\gamma}\gamma_k,
    \quad
    \gamma_k\leq m_{\gamma\beta}\beta_k,\label{coupling_biased_1}
\\[1pt]&
\left(A_{k+1}\eta_k^{\mathcal{A}}+A'_{k+1}\hat{\eta}_k^{\mathcal{A},\ell} \right)  \beta_k
    \leq
   M_2,
\quad
A'_{k+1}\hat{\eta}_k^{\prime\mathcal{A},\ell}\beta_k
\leq
\tilde{M}_2,\label{coupling_biased_a}
\\[1pt]&
\left(C_{k+1}\eta_k^{\mathcal{C}}+C'_{k+1}\hat{\eta}_k^{\mathcal{C},\ell} \right)  \beta_k
\leq M_2,
  \quad
C'_{k+1}\hat{\eta}_k^{\prime\mathcal{C},\ell}\beta_k
\leq
\tilde{M}_2,\label{coupling_biased_2}
\\[1pt]&
\left(B_{k+1}\eta_k^{\mathcal{B}}+B'_{k+1}\hat{\eta}_k^{\mathcal{B},x} \right)  \beta_k
\leq
 \frac{1}{96m_{\alpha\beta}\tilde{c}_2},
\quad
B'_{k+1}\hat{\eta}_k^{\prime\mathcal{B},x}\beta_k
\leq
\frac{1}{196m_{\alpha\beta}},
\label{coupling_biased_3}
\end{align}
\end{subequations}
where $\ell\in\{x,y,z\}$, and  the constants above are determined  by the constants in Assumptions~\ref{assump UL}--\ref{assump LL}. Specifically,
 $\tilde{c}_1=c_1+1,$ $\tilde{c}_2=c_2+1,$
$M_1=\min
    \left\{
    \frac{1}{6\tilde{c}_2},\,
    \frac{1}{4\tilde{c}_1},\,
    \frac{c_3^2}{72\tilde{c}_2^2}
    \right\},$
$ M_2=\min \left\{
\frac{1}{96m_{\alpha\beta}\tilde{c}_1}
,\,\frac{c_3^2}{72\tilde{c}_1c_2},\,
\frac{1}{6\tilde{c}_1},\,
\frac{1}{4m_{\gamma\beta}\tilde{c}_2},\,
\frac{c_3^2}{72m_{\gamma\beta}\tilde{c}_2L_z^2},\,
\frac{\mu}{40m_{\gamma\beta}\tilde{c}_2 c_{10}L_z^2}
 \right\},$
$m_{\alpha\beta}=\min\left\{\frac{3}{8L_{y^*}^2},\,\frac{c_3^2}{108c_1}\right\}$,
\\
$m_{\alpha\gamma}=\min\left\{\frac{3}{16L_{z^*}^2},\,\frac{\mu}{60c_2c_{10}}\right\}$, $m_{\gamma\beta}=\frac{ c_3^2}{54c_1}$,
 $\tilde{M}_2=\min \left\{\frac{c_3^2}{36c_2},\,
\frac{c_3^2}{36m_{\gamma\beta}L_z^2}, \frac{\mu}{20m_{\gamma\beta}c_{10}L_z^2},\,
\frac{1}{196m_{\alpha\beta}}\right\}$.

\begin{remark}\label{section_coupling}
    Decoupling strategies bring immediate benefits in BLO, as they render each search direction linear with respect to the functions $f$ and $g$, which simplifies both the algorithmic structure and the analysis.
Nevertheless, the decoupling is not complete: the step sizes remain intrinsically coupled, as reflected in \eqref{coupling_biased}.
Moreover, for ease of verification, the above step size conditions are simplified based on \eqref{coupling_biased_1}.
\end{remark}

With these step size conditions in place, we are ready to state the following theorem.

\begin{theorem}\label{theorem_biased}
Fix an iteration $K>1$ and suppose that Assumptions \ref{assump UL} and \ref{assump LL} hold.
Select stochastic estimators \( \mathcal{A} \), \( \mathcal{B} \), and \( \mathcal{C} \) as \textbf{biased}, and assume they satisfy the SE-CC and SE-VRC conditions with \( \{ A, A' \} \), \( \{ B, B' \} \), and \( \{ C, C' \} \), respectively.
 Assume that the  sequences $\{\alpha_k\}$, $\{\beta_k\}$ and $\{\gamma_k\}$ satisfy the decoupling step size conditions in \eqref{decoupling_biased1} and the coupling step size conditions in \eqref{coupling_biased}.
Then we have
{%
\setlength{\abovedisplayskip}{4pt}
\setlength{\belowdisplayskip}{4pt}
\setlength{\abovedisplayshortskip}{2pt}
\setlength{\belowdisplayshortskip}{2pt}
\begin{align*}
        \inf_{k< K} \mathbb{E}[\|\nabla H\left(x_k\right)\|^2]
\leq
\frac{ 2(L_{0}-H_*)}{\sum_{k=0}^{K-1}\alpha_k}
+\frac{2}{\sum_{k=0}^{K-1}\alpha_k}
\sum_{k=0}^{K-1}\left(A_{k+1}\Delta_k^{\mathcal{A}}
+B_{k+1}\Delta_k^{\mathcal{B}}
+C_{k+1}\Delta_k^{\mathcal{C}}\right).
\end{align*}}
\end{theorem}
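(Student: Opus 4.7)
The plan is to assemble the monotone decrease of a Lyapunov function out of the six descent inequalities supplied by Steps 1--4, then telescope. With all three estimators biased, the variable $A''_k$ in the Lyapunov function vanishes, so I would work with
\begin{align*}
L_k =\ & \mathbb{E}[H(x_k)] + A_k\mathbb{E}[\|\hat v_k^x - D_k^x\|^2] + B_k\mathbb{E}[\|v_k^y - D_k^y\|^2] + C_k\mathbb{E}[\|v_k^z - D_k^z\|^2] \\
& + A'_k\delta_k^{\mathcal{A}} + B'_k\delta_k^{\mathcal{B}} + C'_k\delta_k^{\mathcal{C}} + C_y\mathbb{E}[\|y_k - y^*_k\|^2] + C_z\mathbb{E}[\|z_k - z^*_k\|^2],
\end{align*}
where $C_y, C_z$ are suitable positive constants chosen to absorb the coefficients $3c_1\alpha_k$ and $3c_2\alpha_k$ from Corollary \ref{H_biased}.

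First I would write the one-step difference $L_{k+1}-L_k$ by plugging in Corollary \ref{H_biased} for the $H$-term, Lemma \ref{y-y*_biased} for the $\|y-y^*\|^2$ term, Lemma \ref{z-z*_biased} for the $\|z-z^*\|^2$ term, and the bilevel variance recursions \eqref{lemma_vr_a}--\eqref{lemma_memory_a} (and their $\mathcal{B}$, $\mathcal{C}$ analogs) for the three variance and memory terms. The resulting inequality is a linear combination of (i) $-\alpha_k \mathbb{E}[\|\nabla H(x_k)\|^2]/2$ (the good term we want), (ii) the three ``iterate-norm'' terms $\mathbb{E}[\|v_k^x\|^2]$, $\mathbb{E}[\|v_k^y\|^2]$, $\mathbb{E}[\|v_k^z\|^2]$, (iii) the variance terms $\mathbb{E}[\|\hat v_k^x-D_k^x\|^2]$, $\mathbb{E}[\|v_k^y-D_k^y\|^2]$, $\mathbb{E}[\|v_k^z-D_k^z\|^2]$ with multipliers of the form $A_{k+1}\theta_k^{\mathcal{A}}-A_k+A'_{k+1}\hat\theta_k^{\mathcal{A}}+(\text{contribution from descent lemmas})$, (iv) the memory terms $\delta_k^{\mathcal{A}}, \delta_k^{\mathcal{B}}, \delta_k^{\mathcal{C}}$ with multipliers $A_{k+1}\tau_k^{\mathcal{A}}+A'_{k+1}\lambda_k^{\mathcal{A}}-A'_k$, etc., and (v) the residual inputs $A'_{k+1}\Delta_k^{\mathcal{A}}$, $B'_{k+1}\Delta_k^{\mathcal{B}}$, $C'_{k+1}\Delta_k^{\mathcal{C}}$.

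The core of the argument is then showing that, with the Lyapunov coefficients $\{A_k,A'_k,B_k,B'_k,C_k,C'_k,C_y,C_z\}$ supplied by the SE-CC condition applied to each of $\mathcal{A},\mathcal{B},\mathcal{C}$, the multipliers in (ii), (iii), and (iv) are non-positive. The SE-CC inequalities $A_{k+1}\theta_k^{\mathcal{A}}-A_k+A'_{k+1}\hat\theta_k^{\mathcal{A}}\le -\alpha_k$ and $A_{k+1}\tau_k^{\mathcal{A}}+A'_{k+1}\lambda_k^{\mathcal{A}}-A'_k\le 0$ (and analogs) are designed exactly for this; the slack $-\alpha_k$ on the right-hand side of the first inequality is what absorbs the term $\alpha_k\mathbb{E}[\|v_k^x-D_k^x\|^2]$ appearing in Corollary \ref{H_biased} (and similarly for the $y$- and $z$-blocks, whose corresponding slacks consume the $c_4\beta_k, c_4\gamma_k$ terms). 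The iterate-norm terms $\mathbb{E}[\|v_k^\ast\|^2]$ come in with a negative $-\alpha_k/4$ coefficient from Lemma \ref{H} and positive coefficients $c_5\alpha_k^2/\beta_k$, $c_7\alpha_k^2/\gamma_k$ from the $y,z$ descent lemmas and $4c_{1,2}\eta_k^{\ast}\alpha_k^2, \beta_k^2, \gamma_k^2$ from the variance recursions; the coupling step size conditions \eqref{coupling_biased_1}--\eqref{coupling_biased_3} are exactly what force their weighted sum to be $\le 0$. The hard part will be tracking this bookkeeping cleanly --- isolating which coefficient condition absorbs which cross term and verifying that the decoupling condition \eqref{decoupling_biased1} gives positivity of $C_y, C_z$ while the coupling conditions neutralize the $\mathbb{E}[\|v_k^\ast\|^2]$ terms without re-introducing coupling into the variance bookkeeping.

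Once every multiplier except the one in front of $\mathbb{E}[\|\nabla H(x_k)\|^2]$ and the external $\Delta_k^\ast$'s is non-positive, I obtain
\[
\tfrac{\alpha_k}{2}\mathbb{E}[\|\nabla H(x_k)\|^2] \le L_k - L_{k+1} + A'_{k+1}\Delta_k^{\mathcal{A}} + B'_{k+1}\Delta_k^{\mathcal{B}} + C'_{k+1}\Delta_k^{\mathcal{C}}.
\]
Summing from $k=0$ to $K-1$, using $L_K\ge 0$ up to a constant shift (since $H$ is bounded below and the other Lyapunov terms are nonnegative, which can be enforced by a harmless additive constant), and lower-bounding the left side by $\tfrac{1}{2}(\inf_k \mathbb{E}[\|\nabla H(x_k)\|^2])\sum_k\alpha_k$ yields the stated bound after dividing by $\tfrac{1}{2}\sum_k\alpha_k$.
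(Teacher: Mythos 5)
Your overall strategy is the paper's: the same Lyapunov function with $A''_k=0$, the same ingredients (Corollary \ref{H_biased}, Lemmas \ref{y-y*_biased} and \ref{z-z*_biased}, the variance recursions \eqref{lemma_vr_a}--\eqref{lemma_memory_a} and \eqref{lemma_vr_b}--\eqref{lemma_memory_c}), the SE-CC slack $-\alpha_k$ absorbing the $\alpha_k\mathbb{E}[\|v_k^x-D_k^x\|^2]$ term, and a telescoping sum at the end. The paper even makes the same choice you describe for $C_y,C_z$ (it takes $C_y=C_z=1/(2c_4)$ so that the $c_4\beta_k C_y$ and $c_4\gamma_k C_z$ contributions leave slack $-\beta_k/2$ and $-\gamma_k/2$).

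There is, however, one concrete gap in your bookkeeping of the iterate-norm terms. You assert that the terms $\mathbb{E}[\|v_k^\ast\|^2]$ ``come in with a negative $-\alpha_k/4$ coefficient from Lemma \ref{H}'' and that the coupling conditions then force their weighted sum to be nonpositive. That is true only for $\mathbb{E}[\|v_k^x\|^2]$. The terms $\mathbb{E}[\|v_k^y\|^2]$ and $\mathbb{E}[\|v_k^z\|^2]$ enter the assembled inequality \emph{only} with strictly positive multipliers (from the variance recursions, e.g.\ $4c_1\eta_k^{\mathcal{A}}\beta_k^2\mathbb{E}[\|v_k^y\|^2]$ and $4c_2\eta_k^{\mathcal{A}}\gamma_k^2\mathbb{E}[\|v_k^z\|^2]$); no descent lemma supplies a negative partner for them, so no choice of step sizes can make their coefficients nonpositive as they stand. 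The paper closes this by first writing
$\mathbb{E}[\|v_k^y\|^2]\le 2\mathbb{E}[\|v_k^y-D_k^y\|^2]+2\mathbb{E}[\|D_k^y\|^2]$ (and the analogue for $z$) and then invoking Lemma \ref{dL} to bound $\mathbb{E}[\|D_k^y\|^2]$ and $\mathbb{E}[\|D_k^z\|^2]$ by the approximation errors $\mathbb{E}[\|y_k-y_k^*\|^2]$ and $\mathbb{E}[\|z_k-z_k^*\|^2]$; only after this conversion do all contributions land in buckets ($\|v^y-D^y\|^2$, $\|v^z-D^z\|^2$, $\|y-y^*\|^2$, $\|z-z^*\|^2$) that carry negative slack from the SE-CC condition and from the $-c_3C_y\beta_k$, $-c_3C_z\gamma_k$ terms. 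You need to add this decomposition step; with it, the rest of your plan goes through exactly as in the paper.
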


\subsection{General Convergence Theorem \ref{theorem_unbiased}: unbiased estimators}\label{sec:th_unbiased}

Subsequently, we provide a general result for the scenario in which all stochastic estimators are unbiased.
Before stating the main result, we summarize the step size conditions required in Theorem~\ref{theorem_unbiased}, starting with the following decoupling conditions
\begin{subequations}\label{decoupling_unbiased}
\begin{align}
&\alpha_k\leq \frac{1}{2L^H},
\quad
\beta_k\leq
\min\left\{
\frac{1}{\mu+L^g_1},
\frac{\mu}{16c_2},\,
\frac{\mu}{16L_z^2}
\right\},
\quad
\gamma_k\leq \min\left\{
\frac{\mu}{10L_z^2},\,
\frac{1}{10\mu},\,
\frac{1}{L_1^g}
\right\},\label{decoupling_unbiased1}
\\&
\rho_k\leq \min\left\{\frac{L^H}{2\sqrt{c_1}},1\right\},
\quad
\left(\eta_k^{\mathcal{B}}B_{k+1} + \hat{\eta}^{\mathcal{B},y}_k B'_{k+1}\right)
     \leq
     \frac{1}{10\tilde{c}_2},
     \quad
     \hat{\eta}^{\prime\mathcal{B},y}_k B'_{k+1}\beta_k
\leq \frac{\mu}{16c_2},\label{decoupling_unbiased2}
\end{align}
\end{subequations}
and the coupling step size conditions are given by
\begin{subequations}\label{coupling_unbiased}
\begin{align}
&
   \alpha_k \leq m'_{\alpha\beta}\beta_k,
   \quad
   \alpha_k \leq m'_{\alpha\gamma}\gamma_k,
   \quad
   \gamma_k\leq m'_{\gamma\beta}\beta_k,
   \quad
   \rho_k\eta_k^{\mathcal{A}}\leq 4,
\quad
    \frac{\alpha_k}{\rho_k}A_{k+1}''
        \leq \frac{1}{96\left(L^H\right)^2},
   \label{coupling_unbiased_1}
\\&
 \rho_kA_{k+1}''\leq \min \left\{
        \frac{\mu\beta_k}{144c_1},\,
         \frac{\mu \gamma_k}{90c_2},\,
         \frac{1}{144c_1},\,
         \frac{1}{120c_2}
        \right\},
\quad
\left(\eta_k^{\mathcal{B}}B_{k+1} + \hat{\eta}^{\mathcal{B},x}_k B'_{k+1}\right)
        \leq  \frac{L^H}{16\tilde{c}_2},
\\[1pt]&
\left(\eta_k^{\mathcal{A}}A_{k+1} + \hat{\eta}^{\mathcal{A},\ell}_k A'_{k+1}\right)
        \leq  M_3,
       \quad
 \left(\eta_k^{\mathcal{C}}C_{k+1} + \hat{\eta}^{\mathcal{C},\ell}_k C'_{k+1}\right)
        \leq  M_3,
\\[1pt]&
\hat{\eta}^{\prime\mathcal{A},\ell}_k A'_{k+1}\beta_k
\leq M_4
,
       \quad
\hat{\eta}^{\prime\mathcal{B},x}_k B'_{k+1}\beta_k
\leq \frac{1}{24m'_{\alpha\beta}},
\quad
\hat{\eta}^{\prime\mathcal{C},\ell}_k C'_{k+1}\beta_k
\leq M_4,
\label{coupling_unbiased_4}
\end{align}
\end{subequations}
where $\ell\in\{x,y,z\}$,
$M_3=\min\left\{\frac{1}{10\tilde{c}_2},\,
       \frac{L^H}{16\tilde{c}_1}
       \right\}$,
$M_4=\min\left\{\frac{1}{24m'_{\alpha\beta}},\,\frac{\mu}{16c_2},\,\frac{\mu}{10L_z^2m'_{\gamma\beta}}\right\}$,
$m'_{\alpha\beta}=\min \left\{\frac{1}{32c_6},\,\frac{\mu}{24c_1}\right\},$
$
m'_{\alpha\gamma}=\min \left\{\frac{1}{32c_9},\,\frac{\mu}{15c_2}\right\},$
$
m'_{\gamma\beta}=\min \left\{ \frac{\mu }{8c_8 },\,\frac{5}{16}\right\}$.

With these step size conditions in place, we are ready to state the following theorem.

\begin{theorem}\label{theorem_unbiased}
Fix an iteration $K>1$ and suppose that Assumptions \ref{assump UL} and \ref{assump LL} hold.
Consider unbiased stochastic estimators $\mathcal{A}$, $\mathcal{B}$, and $\mathcal{C}$.
Assume that all three estimators satisfy the SE-VRC condition.
In addition, $\mathcal{A}$ satisfies the UEMA-CC condition with coefficient family $\{A,A',A''\}$,
and $\mathcal{B}$ and $\mathcal{C}$ satisfy the UE-CC condition with coefficient families
$\{B,B'\}$ and $\{C,C'\}$, respectively.
Assume that the  sequences $\{\alpha_k\}$, $\{\beta_k\}$, $\{\gamma_k\}$, and $\{\rho_k\}$
satisfy the decoupling conditions in \eqref{decoupling_unbiased}
and the coupling conditions in \eqref{coupling_unbiased}.
Then we have
\begin{align*}
\inf_{k< K} \mathbb{E}[\|\nabla H\left(x_k\right)\|^2]
\leq{}&
\frac{4(L_{0}-H_*)}{\sum_{k=0}^{K-1}\alpha_k}
\\
&\quad+\frac{4\sum_{k=0}^{K-1}
	\left(4(A_{k+1}+A_{k+1}''\rho_k^2)\Delta_k^{\mathcal{A}}
	+B_{k+1}\Delta_k^{\mathcal{B}}
	+C_{k+1}\Delta_k^{\mathcal{C}}\right)}{\sum_{k=0}^{K-1}\alpha_k}.
\end{align*}
\end{theorem}

\section{Illustrative Examples}\label{section_Examples}
To illustrate the effectiveness of the proposed algorithm and its convergence analysis framework, we provide several examples in this section.

We begin by presenting the proofs of convergence and sample complexity for SFFBA and MSEBA.
We then discuss how this framework can generate additional algorithms and extend to the expectation setting.
It is important to note that MA-SABA and SPABA, which were introduced in the conference version, also serve as specific instances of our framework. Due to space limitations, we will not delve further into these methods here.

\subsection{Convergence Rate and Sample Complexity of SFFBA}\label{section_sffba}
Using the convergence analysis framework, we can derive the following convergence result for SFFBA.
\begin{theorem}\label{thsffba}
Fix an iteration $K>1$ and assume that Assumptions \ref{assump UL} and \ref{assump LL} hold.
Select the batch size as $b=\sqrt{N}$,
and set the momentum  parameter \(\bar{\rho}_k\equiv\bar{\rho}= \frac{b}{2N}=\frac{1}{2\sqrt{N}}\).
Then there exist positive constants $\alpha$, $\beta$, $\gamma$, $c_{\beta}$, $c_{\alpha\beta}$, and $c_{\gamma\beta}$ such that if
\begin{align}\label{proof_sffba_decoupling_1}
&\alpha_k\equiv\alpha\leq 1/(2L^H),\quad
\gamma_k\equiv\gamma\leq \overline{c},\quad
\beta_k\equiv\beta\leq c_{\beta},\quad
\alpha_k\beta_k\leq c_{\alpha\beta} ,\quad
\gamma_k\beta_k \leq c_{\gamma\beta},
    \end{align}
and the step sizes satisfy \eqref{coupling_biased_1},
the iterates in SFFBA using the one-time full initialization \eqref{e-SFFBA-full-initialization} satisfy
\begin{align}\label{eq_sffba}
\inf_{k< K} \mathbb{E}[\|\nabla H\left(x_k\right)\|^2]
=\mathcal{O}\left({K}^{-1}\right).
\end{align}
To attain an \(\epsilon\)-stationary point, the sample complexity is
$\mathcal{O}(N+\sqrt{N}\epsilon^{-1})$.
\end{theorem}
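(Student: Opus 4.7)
}
Since SFFBA plugs ZeroSARAH into all three update modules and ZeroSARAH is a biased estimator, the plan is to invoke the general biased-case result in Theorem \ref{theorem_biased}. The work therefore reduces to (i) verifying that ZeroSARAH instantiates the SE-VRC and SE-CC conditions with explicit constants and memory recursions, and (ii) checking that the decoupling/coupling step-size hypotheses \eqref{decoupling_biased1}--\eqref{coupling_biased_3} follow from \eqref{proof_sffba_decoupling_1} plus \eqref{coupling_biased_1} under the choices $b=\sqrt{N}$ and $\bar{\rho}_k=1/(2\sqrt{N})$.

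\paragraph{Step 1 (Identify ZeroSARAH constants).}
First I would write out the ZeroSARAH recursion (using the memory variables $\{w_{k,i}\}$, $\{\tilde w_{k,j}\}$) and, following standard variance-reduction bookkeeping, show that in the single-level sense it satisfies \eqref{lemma_single_vr}--\eqref{lemma_single_memory} with contraction factor $\theta_k = 1-\bar\rho_k + \mathcal{O}(b/N)$, memory decay $\lambda_k = 1-b/N$, and constants $\eta_k',\hat\eta_k',\tau_k,\hat\theta_k$ that are absolute (no dependence on $K$ or $\epsilon$). Inserting the bilevel substitutions used in Step 4 of the framework then yields the bilevel analogues \eqref{lemma_vr_a}--\eqref{lemma_memory_a} for $\mathcal{A}$, and the analogous inequalities for $\mathcal{B},\mathcal{C}$. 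With $\bar\rho_k=1/(2\sqrt{N})$ and $b=\sqrt{N}$, one gets $\theta_k^{\mathcal{A}}=\theta_k^{\mathcal{B}}=\theta_k^{\mathcal{C}}=1-c/\sqrt{N}$ and $\lambda_k^{\mathcal{A}}=\lambda_k^{\mathcal{B}}=\lambda_k^{\mathcal{C}}=1-1/\sqrt{N}$ for a numerical constant $c>0$, with $\tau_k$ and $\hat\theta_k$ of order $1/\sqrt{N}$ as well.

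\paragraph{Step 2 (Choose constant Lyapunov coefficients and verify SE-CC).}
Because all the $\theta_k,\lambda_k,\tau_k,\ldots$ are stationary in $k$, I would take $A_k\equiv A$, $A'_k\equiv A'$, and similarly $B,B',C,C'$ as positive constants to be fixed. The SE-CC inequalities
\[
A\theta_k^{\mathcal{A}}-A+A'\hat\theta_k^{\mathcal{A}}\le -\alpha_k,\qquad A\tau_k^{\mathcal{A}}+A'\lambda_k^{\mathcal{A}}-A'\le 0
\]
then collapse to $A\cdot c/\sqrt{N}\gtrsim \alpha_k + A'\hat\theta_k^{\mathcal{A}}$ and $A'\cdot 1/\sqrt{N}\gtrsim A\tau_k^{\mathcal{A}}$, both of which are satisfied by taking $A,A'$ proportional to $\sqrt{N}$ (so that $A\alpha_k = \mathcal{O}(\sqrt{N}\alpha_k)$) provided $\alpha_k\lesssim 1/\sqrt{N}$; the analogous argument handles $B,B',C,C'$. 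Since $\Delta_k^{\mathcal{A}}=\Delta_k^{\mathcal{B}}=\Delta_k^{\mathcal{C}}=0$ for ZeroSARAH (it carries no exogenous noise floor in the finite-sum regime), the residual sum in Theorem \ref{theorem_biased} vanishes.

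\paragraph{Step 3 (Verify step-size conditions and conclude).}
With $A,A',B,B',C,C'$ of order $\sqrt{N}$ and $\eta_k^{\cdot},\hat\eta_k^{\cdot}$ absolute constants, the products $(A_{k+1}\eta_k^{\mathcal{A}}+A'_{k+1}\hat\eta_k^{\mathcal{A}})\beta_k$ in \eqref{coupling_biased_3} are $\mathcal{O}(\sqrt{N}\beta_k)$, so the coupling bound reduces to $\beta_k \le c_{\beta}/\sqrt{N}$ for an explicit $c_{\beta}$; this is absorbed into \eqref{proof_sffba_decoupling_1}. The remaining relations $\alpha_k\le \min\{\cdots\}\beta_k$ and $\gamma_k\le \tfrac{2}{3}\beta_k$ in \eqref{coupling_biased_1} are assumed directly. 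Applying Theorem \ref{theorem_biased} then gives
\[
\inf_{k\le K}\mathbb{E}\|\nabla H(x_k)\|^2 \le \frac{2L_0}{\sum_{k=0}^{K-1}\alpha_k} = \mathcal{O}(K^{-1}),
\]
because the admissible $\alpha_k$ can be taken as a positive constant of order $1/\sqrt{N}$. Finally, each iteration queries $\mathcal{O}(b)=\mathcal{O}(\sqrt{N})$ oracle calls, and reaching $\epsilon$-stationarity requires $K=\mathcal{O}(\epsilon^{-1})$ iterations, yielding sample complexity $\mathcal{O}(\sqrt{N}\epsilon^{-1})$.

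\paragraph{Main obstacle.}
The bookkeeping of the memory-variable recursion is the delicate part: ZeroSARAH's $\delta_k^{\mathcal{A}}$ couples the historical gradient snapshots at $\{w_{k,i}\}$ with the current iterate $(x_k,y_k,z_k)$, so proving \eqref{lemma_memory_a} in the bilevel setting requires carefully bounding $\|\nabla F_i(w_{k,i})-\nabla F_i(x_k,y_k)\|^2$ by the accumulated squared increments $\alpha_k^2\|v_k^x\|^2+\beta_k^2\|v_k^y\|^2+\gamma_k^2\|v_k^z\|^2$ and then aggregating via the sampling probability $b/N$. Once this bilevel lift is established, the rest is pattern-matching against Theorem \ref{theorem_biased}.
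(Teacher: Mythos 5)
Your overall strategy---instantiating Theorem \ref{theorem_biased} by verifying the SE-VRC and SE-CC conditions for ZeroSARAH and then checking the decoupling/coupling step-size conditions---is exactly the paper's route. The gap is quantitative, and it is fatal to the claimed complexity. After the bilevel lift the ZeroSARAH coefficients are \emph{not} absolute constants: Lemma \ref{lemma_SE-VRC_sffba} gives $\eta_k^{\mathcal{A}}=2/b=\Theta(N^{-1/2})$ and, crucially, $\hat{\eta}_k^{\mathcal{A}}=3\tau/(4c_1 b)=\Theta(\sqrt{N})$ with $\tau=\max\{n,m\}$, because the memory recursion for $\delta_k^{\mathcal{A}}=S_k$ picks up an increment coefficient of order $n/b$ per iteration. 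Your choice of $A'$ of order $\sqrt{N}$ then makes $A'\hat{\eta}_k^{\mathcal{A}}=\Theta(N)$; and even under your (incorrect) premise that $\eta_k,\hat{\eta}_k$ are $O(1)$, you conclude $\beta_k\le c_\beta/\sqrt{N}$ and hence $\alpha_k\lesssim\beta_k\lesssim N^{-1/2}$ via \eqref{coupling_biased_1}. That restriction destroys the result: the bound $2L_0/(K\alpha_k)$ then requires $K=\Omega(\sqrt{N}\epsilon^{-1})$ iterations, i.e.\ $\Omega(N\epsilon^{-1})$ samples, which contradicts your final sentence where you simultaneously assert $\alpha_k=\Theta(N^{-1/2})$ and $K=O(\epsilon^{-1})$.

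The paper escapes this by balancing the two Lyapunov weights at \emph{opposite} scales: $A_{k+1}=\alpha_k/\bar{\rho}_{k+1}=2\sqrt{N}\,\alpha_k$ but $A'_{k+1}=2\alpha_k\bar{\rho}_{k+1}L''=\Theta(\alpha_k/\sqrt{N})$ (similarly for $B,B',C,C'$). With these choices $A_{k+1}\eta_k^{\mathcal{A}}=4\alpha_k$ and $A'_{k+1}\hat{\eta}_k^{\mathcal{A}}=\Theta(\alpha_k\tau/N)=O(\alpha_k)$, so the coupling condition \eqref{coupling_biased_3} collapses to a bound of the form $\alpha_k\beta_k\le c_{\alpha\beta}$ with an $N$-independent constant, the decoupling condition in \eqref{decoupling_biased1} becomes $\beta_k^2\lesssim 1$, and the SE-CC inequalities hold by direct cancellation, e.g.\ $\frac{\alpha_k}{\bar{\rho}_{k+1}}(1-\bar{\rho}_{k+1})^2-\frac{\alpha_{k-1}}{\bar{\rho}_{k}}\le-\alpha_k$. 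All step sizes may then be taken as positive constants independent of $N$, giving $K=O(\epsilon^{-1})$ and sample complexity $O(b\epsilon^{-1})=O(\sqrt{N}\epsilon^{-1})$. To repair your argument you must (i) carry the correct $b$- and $\tau$-dependence of $\eta_k,\hat{\eta}_k,\tau_k,\lambda_k$ through the bilevel lift, and (ii) choose $A'$ small, of order $\alpha_k\bar{\rho}_{k+1}$, rather than large, so that the product $A'\hat{\eta}_k^{\mathcal{A}}$ stays $O(\alpha_k)$ and no $N$-dependent step-size shrinkage is incurred.
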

\begin{proof}
Based on the convergence rate analysis framework we provided, and in conjunction with Theorem \ref{theorem_biased}, we will now give the values of the undetermined parameters, select the appropriate step sizes and Lyapunov function coefficients, and verify the conditions that need to be satisfied in Theorem \ref{theorem_biased}.
\begin{itemize}
    \item[\textcircled{1}] Select the Lyapunov function coefficients as $A_{k+1}=\frac{2\alpha}{\bar{\rho}},$
$B_{k+1}=\frac{2\beta}{\bar{\rho}},$
$C_{k+1}=\frac{2\gamma}{\bar{\rho}},$
\end{itemize}
$A_{k+1}'=8\alpha\bar{\rho}L'',$
$B_{k+1}'=8\beta\bar{\rho}L'',$
$C_{k+1}'=8\gamma\bar{\rho}L''.$
Let  $c_{\gamma\beta}=
c_{\alpha\beta}:=
 \sqrt{\frac{M_2}{16c_1+12L''} },$ and
 $c_{\beta}:=
    \min\left\{
    \overline{c},\,
    \sqrt{ \frac{M_1}{16+12L''}},\,
    \sqrt{\frac{1}{(16+12L'')96m_{\alpha\beta}\tilde{c}_2}}
    \right\}.$
\begin{itemize}
    \item[\textcircled{2}] Validation of the SE-VRC condition:
\end{itemize}
Lemma 2 in \cite{li2021zerosarah} verifies that the SE-VRC condition holds. More details on the properties of ZeroSARAH in the BLO setting can be found in Lemma \ref{lemma_SE-VRC_sffba}.
\begin{itemize}
    \item[\textcircled{3}] Validation of the SE-CC Condition: It holds by
\end{itemize}
{
    \setlength{\abovedisplayskip}{-3pt} 
    \setlength{\belowdisplayskip}{3pt}
\begin{align*}
A_{k+1}\theta_k^{\mathcal{A}}-A_k
&=
\frac{2\alpha}{\bar{\rho}}(1-\bar{\rho})^2-\frac{2\alpha}{\bar{\rho}}
\leq -2\alpha,
\\
A_{k+1}\tau_k^{\mathcal{A}}
+A'_{k+1}\lambda_k^{\mathcal{A}}-A'_{k}
&=
\frac{2\alpha}{\bar{\rho}}\frac{2L''\bar{\rho}^2}{b}
+8\alpha\bar{\rho}L''\left(1-\frac{b}{2N}\right)-8\alpha\bar{\rho}L''\leq 0.
\end{align*}}
For \( \mathcal{B} \) and \( \mathcal{C} \), the same reasoning applies.
\begin{itemize}[topsep=3pt, partopsep=0pt]
    \item[\textcircled{4}] Validation of the decoupling step size condition \eqref{decoupling_biased1}: It holds by \eqref{proof_sffba_decoupling_1} and
\end{itemize}
\begin{align*}
\left(B_{k+1}\eta_k^{\mathcal{B}}+B'_{k+1}\hat{\eta}_k^{\mathcal{B},y} \right)  \beta_k
    &=\left(\frac{2\beta}{\bar{\rho}}\frac{4}{b}+8\beta\bar{\rho}L''\frac{3N}{b}
     \right)\beta
     \leq \left(16+{12L''}\right)\beta^2\leq M_1.
\end{align*}
\begin{itemize}[topsep=3pt, partopsep=0pt]
        \item[\textcircled{5}]  Validation of the coupling step size condition: We have
\end{itemize}
{
    \setlength{\abovedisplayskip}{3pt} 
    \setlength{\belowdisplayskip}{3pt}
\begin{align*}
\left(A_{k+1}\eta_k^{\mathcal{A}}+A'_{k+1}\hat{\eta}_k^{\mathcal{A},\ell} \right)  \beta_k
    =
     \left(\frac{8}{b\bar{\rho}}+\frac{12\bar{\rho}N}{b}L''
     \right)\alpha\beta
    \leq M_2,
\end{align*}}
where $\ell\in\{x,y,z\}$.
 Similarly,
 we have
the coupling conditions \eqref{coupling_biased_2}-\eqref{coupling_biased_3} are satisfied.

Thus, all conditions in Theorem \ref{theorem_biased} are satisfied, and we have
$
\inf_{k< K} \mathbb{E}[\|\nabla H\left(x_k\right)\|^2]
\leq
\frac{2(L_{0}-H_*)}{\sum_{k=0}^{K-1}\alpha_k}.
$

Under the one-time full initialization, the initial
variance and staleness terms vanish.
Hence, $ L_0-H_* =\mathbb{E}\left[H(x_0)\right]+C_y\mathbb{E}\left[\|y_0-y^*_0\|^2\right]
	+C_z\mathbb{E}\left[\|z_0-z^*_0\|^2\right]-H_*$, and therefore does not depend on the $N$-dependent Lyapunov coefficients.
Since the one-time full initialization
requires $\mathcal{O}(N)$ stochastic-oracle calls and each subsequent iteration uses $\mathcal{O}(b)$
samples, the total sample complexity for attaining an
$\epsilon$-stationary point is
\(
\mathcal{O}(N+b\epsilon^{-1})
=
\mathcal{O}(N+\sqrt{N}\epsilon^{-1})
\) for $b=\sqrt{N}$.
\end{proof}

\begin{remark}
This implies that there is no gap between stochastic bilevel
and single-level optimization in the context of ZeroSARAH implementation. The lower bound established in \citep{dagreou2023lower} for BLO indicates that SFFBA achieves optimal sample complexity in the finite-sum setting when $m=\mathcal{O}(n)$ and $\epsilon=\mathcal{O}(n^{-1/2})$.
{However, memory remains a practical limitation of SFFBA: consistent with the ZeroSARAH estimator embedded in the framework, SFFBA maintains per-sample memory variables for variance reduction, resulting in a memory requirement of order  $\mathcal{O}\big(n(d_x + d_y)+m(d_x + 2d_y)\big)$.
}
\end{remark}

\subsection{Convergence Rate and Sample Complexity of MSEBA}\label{section_mseba}
In this section, we state the convergence rate and sample complexity of MSEBA.
\begin{theorem}\label{thmseba}
Fix an iteration $K>1$ and assume that Assumptions \ref{assump UL} and \ref{assump LL} hold.
Choose the batch size as \( b = \sqrt{N} \), the probability \( p = \frac{b}{N + b} \),
and set the parameter \( \bar{\rho}_k \equiv\bar{\rho}=\frac{1}{2\sqrt{N}} \).
If the step sizes satisfy \eqref{coupling_biased_1} and
$
\alpha_k\equiv\alpha\leq 1/(2L^H),$
$
\gamma_k\equiv\gamma\leq \overline{c},$
$
\beta_k\equiv\beta\leq c_{\beta},$
$
\alpha_k\beta_k\leq M_2/2,$
$
\gamma_k\beta_k \leq M_2/2,
$
then the iterates in MSEBA using the one-time full initialization \eqref{e-MSEBA-full-initialization} satisfy
\begin{align*}
\inf_{k< K} \mathbb{E}[\|\nabla H\left(x_k\right)\|^2]
=\mathcal{O}\left({K}^{-1}\right),
\end{align*}
where $c_\beta>0$ is chosen as in the proof of Theorem~16.

Moreover, to achieve an \(\epsilon\)-stationary point, the sample complexity is
$\mathcal{O}(N+\sqrt{N}\epsilon^{-1})$.
\end{theorem}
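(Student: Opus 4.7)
The plan is to apply Theorem~\ref{theorem_biased} directly, since all three estimators in MSEBA (PAGE for $\mathcal{A}$ and $\mathcal{C}$, ZeroSARAH for $\mathcal{B}$) are biased. The proof will follow exactly the five bookkeeping steps \textcircled{1}--\textcircled{5} used in the proof of Theorem~\ref{thsffba}: fix Lyapunov coefficients, verify the SE-VRC condition for each estimator, verify the SE-CC condition, and then check the decoupling and coupling step-size conditions \eqref{decoupling_biased1} and \eqref{coupling_biased_1}--\eqref{coupling_biased_3}. The choices of $b = \sqrt{N}$, $p = b/(N+b)$, and $\bar{\rho}_k = 1/(2\sqrt{N})$ will be calibrated so that the PAGE-induced quantities $1-p$ and the ZeroSARAH-induced quantities $(1-\bar{\rho}_k)^2$ are numerically comparable, which is what lets the mixed-estimator analysis close.

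First I will identify the variance parameters. For PAGE (used for both $x$ and $z$), standard single-level analysis gives $\theta_k^{\mathcal{A}} = \theta_k^{\mathcal{C}} = 1-p$, $\eta_k^{\mathcal{A}} = \eta_k^{\mathcal{C}} = (1-p)/b$, and no memory variable is needed, so I can take $\tau_k^{\mathcal{A}} = \tau_k^{\mathcal{C}} = 0$ and set $A'_k = C'_k = 0$ (equivalently, $\delta_k^{\mathcal{A}} \equiv \delta_k^{\mathcal{C}} \equiv 0$), which makes the second inequalities in the SE-CC condition trivial. For ZeroSARAH, the parameters coincide with those already computed in the SFFBA proof via Lemma~2 of \cite{li2021zerosarah} (lifted to the bilevel setting in Lemma~\ref{lemma_SE-VRC_sffba}), namely $\theta_k^{\mathcal{B}} = (1-\bar{\rho}_{k+1})^2$, $\eta_k^{\mathcal{B}} = 2/b$, $\tau_k^{\mathcal{B}} = L''\bar{\rho}_{k+1}^2/b$, and $\lambda_k^{\mathcal{B}} = 1 - b/(2N)$, with the attendant $\hat{\theta}_k^{\mathcal{B}}, \hat{\eta}_k^{\mathcal{B}}$ as in that lemma.

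Next I will pick the Lyapunov coefficients. Because PAGE contracts by the factor $1-p$, the natural choice is $A_{k+1} = \alpha_k/p$ and $C_{k+1} = \gamma_k/p$, since then $A_{k+1}\theta_k^{\mathcal{A}} - A_k = \alpha_k(1-p)/p - \alpha_{k-1}/p \leq -\alpha_k$ when $\alpha_k$ is constant or non-increasing, and identically for $C$. For ZeroSARAH I reuse the SFFBA assignment $B_{k+1} = \beta_k/\bar{\rho}_{k+1}$ and $B'_{k+1} = 2\beta_k\bar{\rho}_{k+1}L''$; the SE-CC inequalities for $\mathcal{B}$ then reduce to the same algebra already verified inside the proof of Theorem~\ref{thsffba}. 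Since $1-p = N/(N+b) = \Theta(1)$ and $p = \Theta(1/\sqrt{N})$, while $\bar{\rho}_{k+1} = \Theta(1/\sqrt{N})$ as well, all three families of coefficients live on the same $\sqrt{N}$-scale, which is the reason the thresholds $M_1$ and $c_\beta$ appear verbatim here.

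Finally I will verify the step-size conditions. The decoupling condition \eqref{decoupling_biased1} on $\beta_k$ is unchanged from SFFBA because it only involves $\mathcal{B}$, so the bound $\beta_k \leq c_\beta$ together with $\left(B_{k+1}\eta_k^{\mathcal{B}} + B'_{k+1}\hat{\eta}_k^{\mathcal{B}}\right)\beta_k = \mathcal{O}(\beta_k^2)$ is immediate. The coupling condition \eqref{coupling_biased_3} reduces, using $A'_{k+1} = C'_{k+1} = 0$, to
\begin{equation*}
A_{k+1}\eta_k^{\mathcal{A}} \beta_k = \frac{\alpha_k}{p}\cdot \frac{1-p}{b}\cdot \beta_k = \mathcal{O}\!\left(\frac{\alpha_k \beta_k}{bp}\right) = \mathcal{O}(\alpha_k\beta_k),
\end{equation*}
since $bp = b^2/(N+b) = \Theta(1)$, and likewise for $C$; hence $\alpha_k\beta_k \leq M_1$ and $\gamma_k\beta_k \leq M_1$ (as assumed in the theorem) give \eqref{coupling_biased_3}, while the remaining relations in \eqref{coupling_biased_1} are assumed directly. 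Invoking Theorem~\ref{theorem_biased} then yields $\inf_{k\le K} \mathbb{E}\|\nabla H(x_k)\|^2 \leq 2L_0/\!\sum_k \alpha_k = \mathcal{O}(K^{-1})$ (note $\Delta_k^{\mathcal{A}} = \Delta_k^{\mathcal{C}} = 0$ and $\Delta_k^{\mathcal{B}} = 0$ in the finite-sum setting), and combining this with the per-iteration sample cost $\mathbb{E}[\text{samples}] = pN + (1-p)b = \Theta(\sqrt{N})$ gives the claimed $\mathcal{O}(\sqrt{N}\epsilon^{-1})$ complexity. I expect the main subtle point to be keeping track of the factor $bp$ cleanly when combining the PAGE contraction $1-p$ with the ZeroSARAH memory recursion—getting these to balance is precisely why the choices $b = \sqrt{N}$ and $p = b/(N+b)$ are made in tandem, and it is the only place where the mixed-estimator nature of MSEBA materially changes the calculation relative to SFFBA.
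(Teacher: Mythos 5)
Your proposal is correct and follows essentially the same route as the paper's proof: it invokes Theorem~\ref{theorem_biased} with the identical Lyapunov coefficients ($A_{k+1}=\alpha_k/p$, $C_{k+1}=\gamma_k/p$, $A'_{k+1}=C'_{k+1}=0$, and the SFFBA choices for $B_{k+1},B'_{k+1}$), the same PAGE/ZeroSARAH variance parameters, and the same observation that $(1-p)/(pb)=\Theta(1)$ (in fact exactly $N/b^2=1$) makes the coupling condition reduce to $\alpha_k\beta_k\le M_1$. The only cosmetic deviation is writing $\lambda_k^{\mathcal{B}}=1-b/(2N)$ instead of $1-b/(2\tau)$ with $\tau=\max\{n,m\}$, which does not affect the argument.
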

\begin{proof}
We apply Theorem \ref{theorem_biased} to prove this result.
\begin{itemize}
    \item[\textcircled{1}] Select Lyapunov function coefficients as:
\end{itemize}
$A_{k+1}=\frac{2\alpha}{p},$
$B_{k+1}=\frac{2\beta}{\bar{\rho}}$,
$C_{k+1}=\frac{2\gamma}{p},$
$A_{k+1}'=C_{k+1}'=0$ and
$B_{k+1}'=8\beta\bar{\rho}L''$.

\begin{itemize}
    \item[\textcircled{2}] Validation of the SE-VRC condition and SE-CC Condition:
\end{itemize}
According to Lemma 3 in \cite{li2021page} and Lemma \ref{lemma_SE-VRC_sffba}, we obtain that the stochastic estimators $\mathcal{A}$, $\mathcal{B}$ and $\mathcal{C}$ satisfy SE-VRC condition with the following values
\begin{align*}
& \theta_k^{\mathcal{A}}=\theta_k^{\mathcal{C}}=(1-p),\quad
  \theta_k^{\mathcal{B}}=(1-\bar{\rho})^2,
    \quad
    \eta_k^{\mathcal{A}}=\eta_k^{\mathcal{C}}=\frac{1-p}{b},\quad
    \eta_k^{\mathcal{B}}=\frac{4}{b},
    \quad
  \hat{\eta}_k^{\mathcal{B},\ell}=\frac{3N}{b},
        \\
   &  \tau_k^{\mathcal{A}}=\tau_k^{\mathcal{C}}=
   \delta_k^{\mathcal{A}}= \delta_k^{\mathcal{C}}=
   \lambda_k^{\mathcal{A}} = \lambda_k^{\mathcal{C}} =
   \hat{\eta}_k^{\mathcal{A},\ell} = \hat{\eta}_k^{\mathcal{C},\ell} =0,
        \quad
        \tau_k^{\mathcal{B}}=\frac{2L''\bar{\rho}^2}{b},\,
  \delta_k^{\mathcal{B}}= \delta_k, \,
  \lambda_k^{\mathcal{B}}=1-\frac{b}{2N},
  \\
&
\hat{\eta}_k^{\prime\mathcal{A},\ell} = \hat{\eta}_k^{\prime\mathcal{B},\ell} = \hat{\eta}_k^{\prime\mathcal{C},\ell} =\Delta_k^{\mathcal{A}}=\Delta_k^{\mathcal{B}}=\Delta_k^{\mathcal{C}}=0.
\end{align*}
The stochastic estimator \(\mathcal{B}\) satisfies the SE-CC condition, as verified in \textcircled{3} in the proof of Theorem \ref{thsffba}.
For \( \mathcal{A} \), we have
\begin{align*}
A_{k+1}\theta_k^{\mathcal{A}}-A_k
=(1-p)\frac{2\alpha}{p}-\frac{2\alpha}{p}+0
=-2\alpha,
\,
A_{k+1}\tau_k^{\mathcal{A}}
+A'_{k+1}\lambda_k^{\mathcal{A}}-A'_{k}
= 0.
\end{align*}
The same applies to \( \mathcal{C} \) .
\begin{itemize}[topsep=3pt, partopsep=0pt]
    \item[\textcircled{3}] Validation of the decoupling step size condition: see \textcircled{4} in the proof of Theorem~\ref{thsffba}.
\end{itemize}
\begin{itemize}[topsep=3pt, partopsep=0pt]
    \item[\textcircled{4}] Validation of the coupling step size condition: for conditions \eqref{coupling_biased_a}-\eqref{coupling_biased_2}, we have
\end{itemize}
{
    \setlength{\abovedisplayskip}{3pt} 
    \setlength{\belowdisplayskip}{3pt}
\begin{align*}
    \left(A_{k+1}\eta_k^{\mathcal{A}}+A'_{k+1}\hat{\eta}_k^{\mathcal{A},\ell} \right)  \beta_k
    =2\alpha\beta\frac{1-p}{pb} = 2\alpha\beta \leq M_2, \quad
    \left(C_{k+1}\eta_k^{\mathcal{C}}+C'_{k+1}\hat{\eta}_k^{\mathcal{C},\ell} \right)  \beta_k \leq M_2.
\end{align*}}

Thus, all the conditions in Theorem \ref{theorem_biased} are satisfied, and we finally obtain the convergence result,
which implies that, to achieve an \(\epsilon\)-stationary point, the sample complexity is
$\mathcal{O}\big(N+b\epsilon^{-1}+(pN+(1-p)b)\epsilon^{-1}\big)=\mathcal{O}(N+\sqrt{N}\epsilon^{-1}).$
\end{proof}

\subsection{Extension to the Expectation Setting}\label{subsection_expectation}
It is worth noting that our algorithm framework, PnPBO, and its convergence analysis framework can be directly applied to the expectation setting, where the objective functions are
\[
f(x, y) = \mathbb{E}_{\xi}\left[F\left(x, y ; \xi\right)\right],
\quad
g(x, y) = \mathbb{E}_{\zeta}\left[G\left(x, y ; \zeta\right)\right].
\]
This extends problem \eqref{pro_fini} and captures scenarios where the data comes from a stream or online setting with a substantial or potentially infinite number of samples.

In this expectation setting, we propose two illustrative algorithms as examples. First, we introduce SRMBA, which directly integrates the stochastic estimator STORM into PnPBO. Secondly, by adjusting the batch size, we propose the generalized SPABA, which is the application of PAGE within PnPBO. These algorithms are described in Section 2.4 and 2.3 of the conference version \cite{chuspaba}.

The analysis framework remains the same as described in Section \ref{section_Convergence_Analysis_Framework}, with the difference being that its variance reduction in Step 4 may depend on the addition of the following variance bounding assumption:
\begin{assumption}\label{assumporacle}
There exist positive constants \( \sigma_f \), \( \sigma_{g,1} \), and \( \sigma_{g,2} \) such that
\begin{gather*}
    \mathbb{E}[\|\nabla F(x, y; \xi) - \nabla f(x, y)\|^2] \leq \sigma_f^2,\\
    \mathbb{E}[\|\nabla G(x, y; \zeta) - \nabla g(x, y)\|^2] \leq \sigma_{g,1}^2,
    \quad
    \mathbb{E}[\|\nabla^2 G(x, y; \zeta) - \nabla^2 g(x, y)\|^2] \leq \sigma_{g,2}^2.
\end{gather*}
\end{assumption}

Under Assumptions \ref{assump UL}-\ref{assumporacle}, the sample complexity of SRMBA is  \( \tilde{\mathcal{O}}(\epsilon^{-1.5}) \), with an improved batch size of  \( {\mathcal{O}}(1) \). For the generalized SPABA, it achieves optimal sample complexity \( \mathcal{O}(\epsilon^{-1.5}) \). For more details, please refer to Theorem 3.11 and Theorem 3.9 in the conference version.
We compare SRMBA and the generalized SPABA with other methods in the expectation setting in Table \ref{table:exp}.

\begin{table}[ht]
\centering
\renewcommand{\arraystretch}{1.8}
\begin{center}
\begin{footnotesize}
\begin{tabular}{cccccc}
\hline
Algorithm & Stochastic Estimator &\makecell{Sample Complexity}  & Gap& Batch Size & Reference\\ \hline
stocBiO
&  \makecell{SGD}
& \makecell{ $\tilde{\mathcal{O}}(\epsilon^{-2})$}
& \ding{51}
& $\tilde{\mathcal{O}}(\epsilon^{-1})$
& \cite{pmlr-v139-ji21c}
\\
MRBO
&  \makecell{STORM}
& \makecell{ $\tilde{\mathcal{O}}(\epsilon^{-1.5})$}
& \ding{51}
& $\tilde{\mathcal{O}}(1)$
& \cite{yang2021provably}
\\
SUSTAIN
& STORM
& \makecell{ $\tilde{\mathcal{O}}(\epsilon^{-1.5})$}
& \ding{51}
& $\tilde{\mathcal{O}}(1)$
& \cite{khanduri2021near}
\\
VRBO
& SARAH
& \makecell{ $\tilde{\mathcal{O}}(\epsilon^{-1.5})$}
& \ding{51}
& $\tilde{\mathcal{O}}(\epsilon^{-0.5})$
&\cite{yang2021provably}\\
SRMBA
& STORM
& \makecell{ $\tilde{\mathcal{O}}(\epsilon^{-1.5})$}
& \ding{55}
& $\mathcal{O}(1)$
& Theorem 3.11$^*$\\
SPABA
& PAGE
& \makecell{ ${\mathcal{O}}(\epsilon^{-1.5})$}
& \ding{55}
& $\mathcal{O}(1)$
& Theorem 3.9$^*$
\\
\hline
\multicolumn{6}{c}{{ Lower Bound: $\Omega(\epsilon^{-1.5})$ \citep{arjevani2023lower}}} \\
\hline
\end{tabular}
\end{footnotesize}
\end{center}

\caption{Comparison of Stochastic Algorithms for BLO in the Expectation Setting.\\
\footnotesize{
$^*$ indicates that this result has been published in our conference paper \citep{chuspaba}.
 We omit comparisons with BSA \citep{ghadimi2018approximation}, TTSA \citep{hong2023two}, SOBA \citep{dagreou2022framework}, AmIGO \citep{arbel2022amortized}, and MA-SOBA \citep{chen2024optimal}, which rely on smoothness assumptions for \( f \) and \( g \) (instead of the mean-squared smoothness condition), with a best-case sample complexity of \( \mathcal{O}(\epsilon^{-2}) \).}}
 \label{table:exp}
\end{table}

\section{Numerical Experiments}

In this section, we conduct numerical experiments grounded in our theoretical results to validate the effectiveness of the proposed framework and associated algorithms.

We first compare the performance of our algorithms---SPABA, SFFBA, and MSEBA---with several benchmark methods on two widely studied tasks:
data hyper-cleaning on the corrupted \texttt{MNIST}\footnote{\url{http://yann.lecun.com/exdb/mnist/}} data set, and hyperparameter optimization for \( \ell^2 \)-penalized logistic regression on the \texttt{IJCNN1}\footnote{\url{https://www.csie.ntu.edu.tw/~cjlin/libsvmtools/datasets/binary.html}} and \texttt{covtype}\footnote{\url{https://scikit-learn.org/stable/modules/generated/sklearn.datasets.fetch_covtype.html}} benchmark data sets.
In addition, to validate the effectiveness of the PnPBO framework, we conduct ablation studies on both the moving average technique and the variable clipping technique.
All experiments were repeated five times, and the average results were reported.

\textbf{Setting.}
We strictly follow the experimental settings provided in benchmark\_bilevel in \cite{dagreou2022framework}. The original results and configurations are also publicly available at \url{https://benchopt.github.io/results/benchmark_bilevel.html}.

In all experiments, we use a batch size of 64 across all methods. The step sizes and momentum parameters for the benchmark algorithms are adopted directly from the fine-tuned values provided by \cite{dagreou2022framework}. For our methods---SPABA, SFFBA, and MSEBA---we select the best constant step sizes based on a grid search.

In this experiment, the optimal pair $(\alpha, \beta, \gamma)$ is selected via grid search. The parameter $\alpha$ is chosen from a set of 11 values logarithmically spaced between $10^{-3}$ and $10^0$. For $\beta$ and $\gamma$, we first select $\phi$ and $\kappa$ from 11 values logarithmically spaced between $10^{-5}$ and $10^0$, and then set $\beta = \frac{\alpha}{\phi}$ and $\gamma = \frac{\alpha}{\kappa}$.
In SPABA, the probability parameter $1 - p$ is selected from the set $\{0.001, 0.002, \ldots, 1\}$.
For SFFBA and MSEBA, the momentum parameter $\bar{\rho}$ is chosen from the set $\{0.01, 0.02, \ldots, 1\}$.
The clipping radius is set to \( R = 1 \).

The benchmark methods we compared include SGD-based algorithms: stocBio \citep{pmlr-v139-ji21c}, SOBA \citep{dagreou2022framework}, AmIGO \citep{arbel2022amortized}, TTSA \citep{hong2023two}; variance-reduction algorithms: SUSTAIN \citep{khanduri2021near}, MRBO \citep{yang2021provably}; algorithms proposed for finite-sum setting: SABA \citep{dagreou2022framework} and SRBA \citep{dagreou2023lower}.

\subsection{Data Hyper-Cleaning}

The first learning task we address is data hyper-cleaning on the \texttt{MNIST} data set.
The data set is partitioned into a training set $(d_i^{\text{train}}, y_i^{\text{train}})$ with 20,000 samples,
a validation set $(d_j^{\text{val}}, y_j^{\text{val}})$ with 5,000 samples, and a test set with 10,000 samples.
The input samples \( d \) are 784-dimensional, with target labels \( y \) ranging from 0 to 9. In the training set, each sample is independently corrupted with probability \( \tilde{p} \) by randomly replacing its label \( y_i \) with one from \(\{0, \ldots, 9\}\).
The validation and test sets remain uncorrupted.

The objective of data hyper-cleaning is to train a multinomial logistic regression model on the corrupted training set, assigning a weight to each training sample to ideally reduce the influence of corrupted samples by assigning them weights close to zero. This is formulated as a BLO problem, with the UL objective function
\(
f(\lambda, \theta) = \frac{1}{m} \sum_{j=1}^m \ell\left(\theta d_j^{\mathrm{val}}, y_j^{\mathrm{val}}\right),
\)
and the LL objective function
\(
g(\lambda, \theta) = \frac{1}{n} \sum_{i=1}^n \sigma\left(\lambda_i\right) \ell\left(\theta d_i^{\text{train}}, y_i^{\text{train}}\right) + C_r\|\theta\|^2,
\)
where \( \ell \) denotes the cross-entropy loss, \( \sigma \) is the sigmoid function, and we choose \( C_r = 0.2 \) after a manual search to achieve the best final test accuracy. In this experiment, the LL variable \( \theta \) is a matrix of size \( 10 \times 784 \), and the UL variable \( \lambda \) is a vector in dimension \( n_{\text{train}} = 20{,}000 \).

Figure~\ref{datacleaning_9_vr}, we report the test error---defined as the percentage of incorrect predictions on the test set---comparing SPABA, SFFBA, and MSEBA with variance-reduction-based methods, including those designed for the finite-sum setting, under a corruption probability of \( \tilde{p} = 0.9 \). To supplement this comparison, we conducted additional experiments using corruption probabilities \( \tilde{p} \in \{0.5, 0.7, 0.9\} \) with a broader set of comparison methods. The corresponding results are presented in Figures~\ref{datacleaning_5}–\ref{datacleaning_9}.

We observe that our method, SPABA, consistently outperforms other methods across all three values of \( \tilde{p} \), particularly on the more challenging settings with \( \tilde{p} = 0.7 \) and \( \tilde{p} = 0.9 \), achieving lowest test errors at a faster rate. Our second algorithm, MSEBA, which employs a mixed strategy by combining the stochastic estimators from both SPABA and SFFBA, performs between SPABA and SFFBA. It surpasses most other baseline algorithms and demonstrates strong performance on the data hyper-cleaning task.

\subsection{Hyperparameter Selection on the covtype Data Set}

In the second task, we address the problem of hyperparameter selection for determining regularization parameters in \(\ell^2\)-penalized logistic regression, evaluated on the \texttt{\texttt{covtype}} data set. This data set comprises 581{,}012 samples, each with \(\hat{p} = 54\) features, and includes \(C = 7\) classes. We use \(n = 371{,}847\) training samples, \(m = 92{,}962\) validation samples, and \(n_{\text{test}} = 116{,}203\) test samples.

Let \(\{(d_i^{\text{train}}, y_i^{\text{train}})\}_{1 \leq i \leq n}\) and \(\{(d_j^{\text{val}}, y_j^{\text{val}})\}_{1 \leq j \leq m}\) denote the training and validation sets, respectively. The bilevel objective functions are defined as
\begin{align*}
f(\theta,\lambda)
&= \frac{1}{m}\sum_{j=1}^m
   \ell\left(\theta d_j^{\mathrm{val}},y_j^{\mathrm{val}}\right),\\
g(\theta,\lambda)
&= \frac{1}{n}\sum_{i=1}^n
   \ell\left(\theta d_i^{\mathrm{train}},y_i^{\mathrm{train}}\right)
   + \sum_{c=1}^C e^{\lambda_c}
     \sum_{i=1}^{\hat{p}}\theta_{i,c}^2.
\end{align*}
where the LL variable \(\theta \in \mathbb{R}^{54 \times 7}\) represents the model parameters, and the UL variable \(\lambda \in \mathbb{R}^7\) denotes the class-wise regularization coefficients.

Figure~\ref{cov} shows comparisons with all benchmark methods on the \texttt{\texttt{covtype}} data set, focusing on the test error. All algorithms exhibit a rapid decrease in test error, with the performance of SRBA and our method, SPABA, closely aligned. Both methods achieve the lowest test error value and do so at the fastest rate.
\begin{figure}[tb]
	\centering
        \subfigure{
		\centering
		\includegraphics[width=0.7\linewidth]{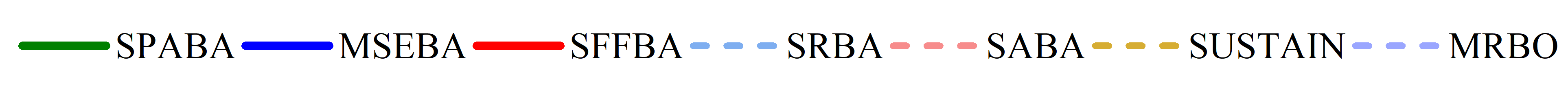}
	} \\
        \setcounter{subfigure}{0}
        \subfigure[Data hyper-cleaning on \texttt{MNIST}]{
		\centering
		\includegraphics[width=0.42\linewidth]{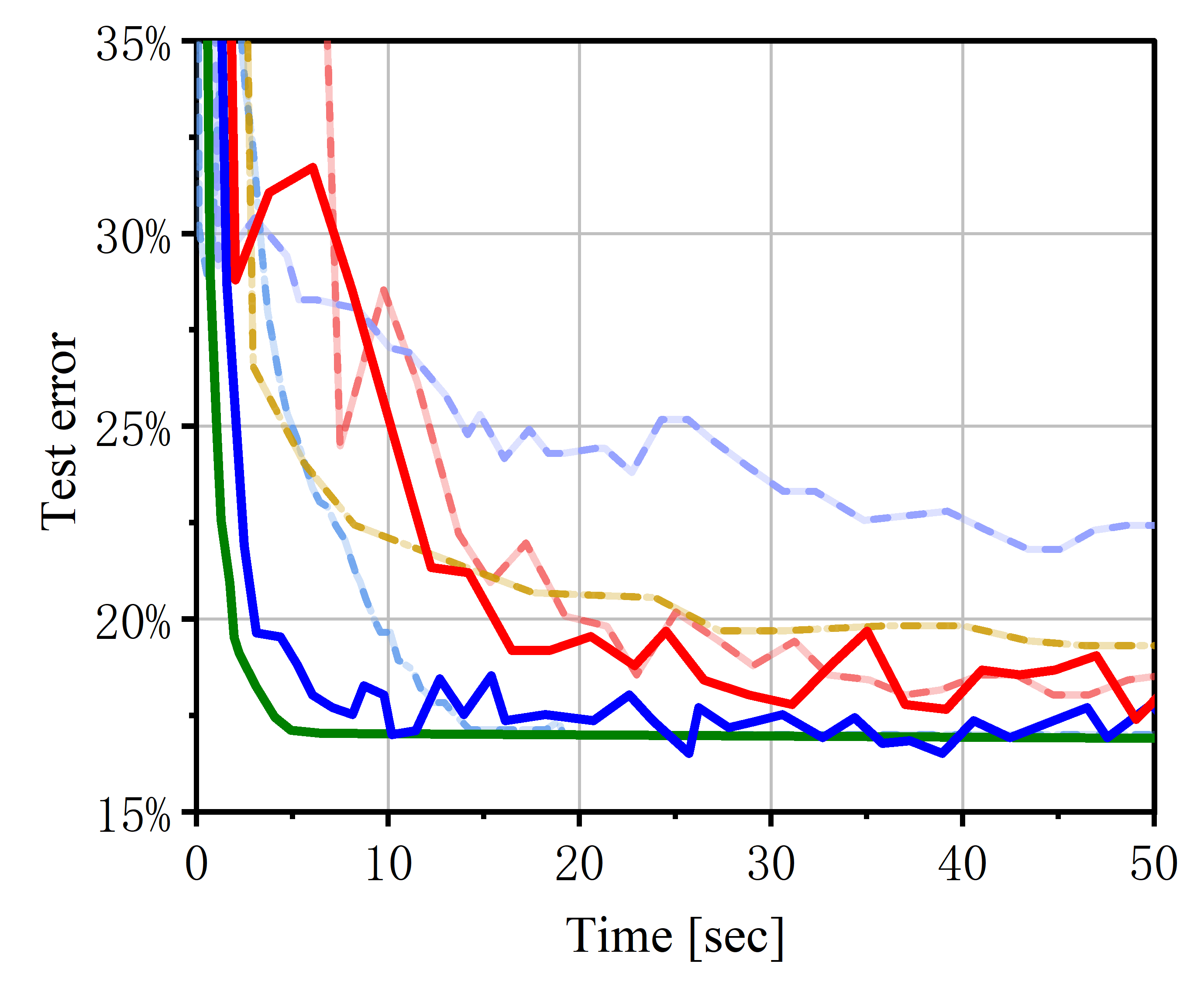}
		\label{datacleaning_9_vr}
	}
            \subfigure[Logistic regression on \texttt{IJCNN1}]{
		\centering
		\includegraphics[width=0.42\linewidth]{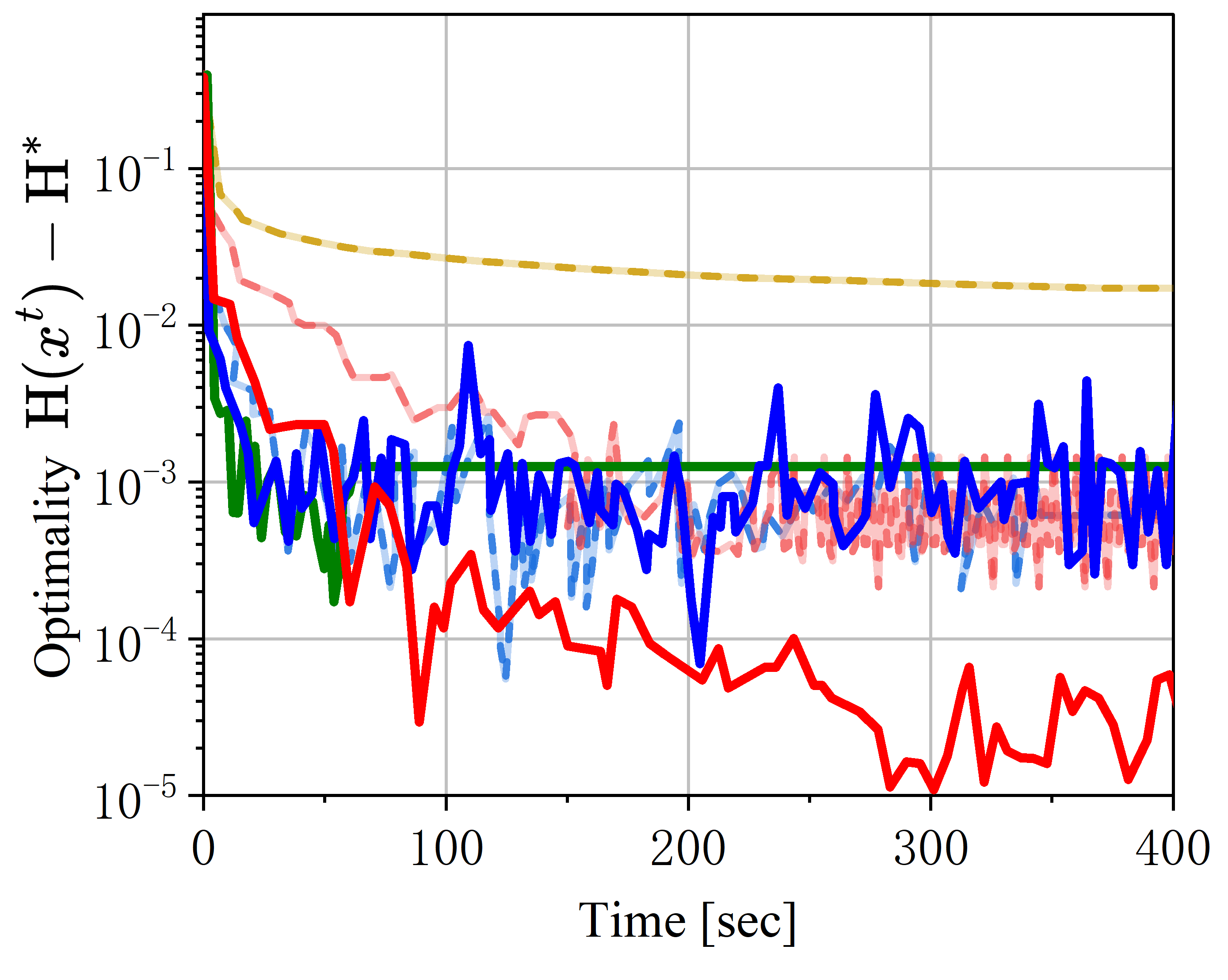}
		\label{ijcnn_vr}
	}

	\caption{Comparison of SPABA, SFFBA and MSEBA with other variance-reduction-based stochastic BLO methods. \textbf{Left:} Test error for data hyper-cleaning on \texttt{MNIST} with
$\tilde{p} = 0.9$ corruption rate, \textbf{ Right:} Suboptimality gap for hyperparameter optimization for $l^2$ penalized logistic regression on \texttt{IJCNN1} data set.}
	\label{dataclean}

\end{figure}

\subsection{Hyperparameter Selection on the IJCNN1 Data Set}
In the third task, we fit a
logistic regression model (for binary classification), and select the regularization parameters (one hyperparameter per feature) on the \texttt{IJCNN1} data set. Let $\{(d_i^{\text{train}}, y_i^{\text{train}})\}_{1 \leq i \leq n}$ and $\{(d_j^{\text{val}}, y_j^{\text{val}})\}_{1 \leq j \leq m}$ denote the training and validation sets, respectively.

In this context, the LL variable $\theta$ corresponds to the model parameters, while the UL variable $\lambda$ represents the regularization parameters. The UL and LL objective functions are defined as
\(
f(\lambda, \theta) = \frac{1}{m} \sum_{j=1}^m \varphi\left(y_j^{\mathrm{val}}\langle d_j^{\mathrm{val}}, \theta\rangle\right) \)
and
\(
g(\lambda, \theta) = \frac{1}{n} \sum_{i=1}^n \varphi\left(y_i^{\mathrm{train}}\langle d_i^{\mathrm{train}}, \theta\rangle\right) + \frac{1}{2} \sum_{k=1}^p e^{\lambda_k} \theta_k^2.
\)
In this case, the data set is partitioned into a training set $\mathcal{D}_{\text {train}}$, a validation set $\mathcal{D}_{\text {val}}$, and a test set $\mathcal{D}_{\text {test}}$, where $|\mathcal{D}_{\text{train}}| = 49{,}990$, and $|\mathcal{D}_{\text{val}}| = 91{,}701$. The LL variable $\theta \in \mathbb{R}^{22}$ is the regression coefficient, while the UL variable $\lambda \in \mathbb{R}^{22}$ is a vector of regularization parameters.
Unfortunately, the previously reported results for MRBO on the \texttt{IJCNN1} data set and SABA on the \texttt{covtype} data set are currently not reproducible due to conflicts with the latest developer version of \texttt{Benchopt}.

Figure~\ref{ijcnn_vr} compares the suboptimality gap of our methods—SPABA, SFFBA, and MSEBA—with that of other variance-reduction-based algorithms for hyperparameter optimization in \( \ell^2 \)-penalized logistic regression on the \texttt{IJCNN1} data set. Figure~\ref{ijcnn} further compares our methods with all other benchmark algorithms.
We observe that SFFBA achieves the lowest values, clearly outperforming the other algorithms.
Notably, SFFBA does not require computing full gradients, and its advantage over the other algorithms becomes more pronounced.
Specifically, we observe that SFFBA is the only algorithm that consistently reaches an objective value below \( 10^{-4} \). Both MSEBA and SPABA also achieve objective values close to \(10^{-4} \), outperforming most baseline algorithms, including SRBA.
\begin{figure}[tb]
	\centering
        \subfigure{
		\centering
		\includegraphics[width=0.7\linewidth]{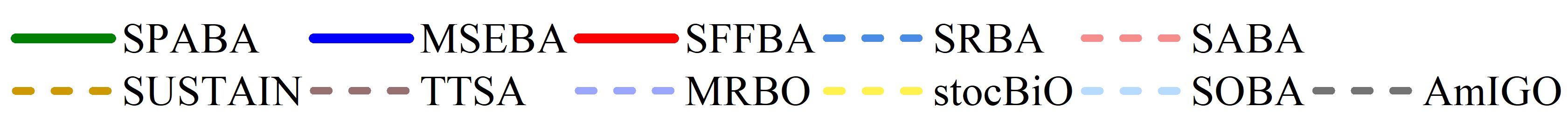}
	} \\
        \setcounter{subfigure}{0}
	\subfigure[Data hyper-cleaning ($\tilde{p}=0.5$)]{
		\centering
		\includegraphics[width=0.31\linewidth]{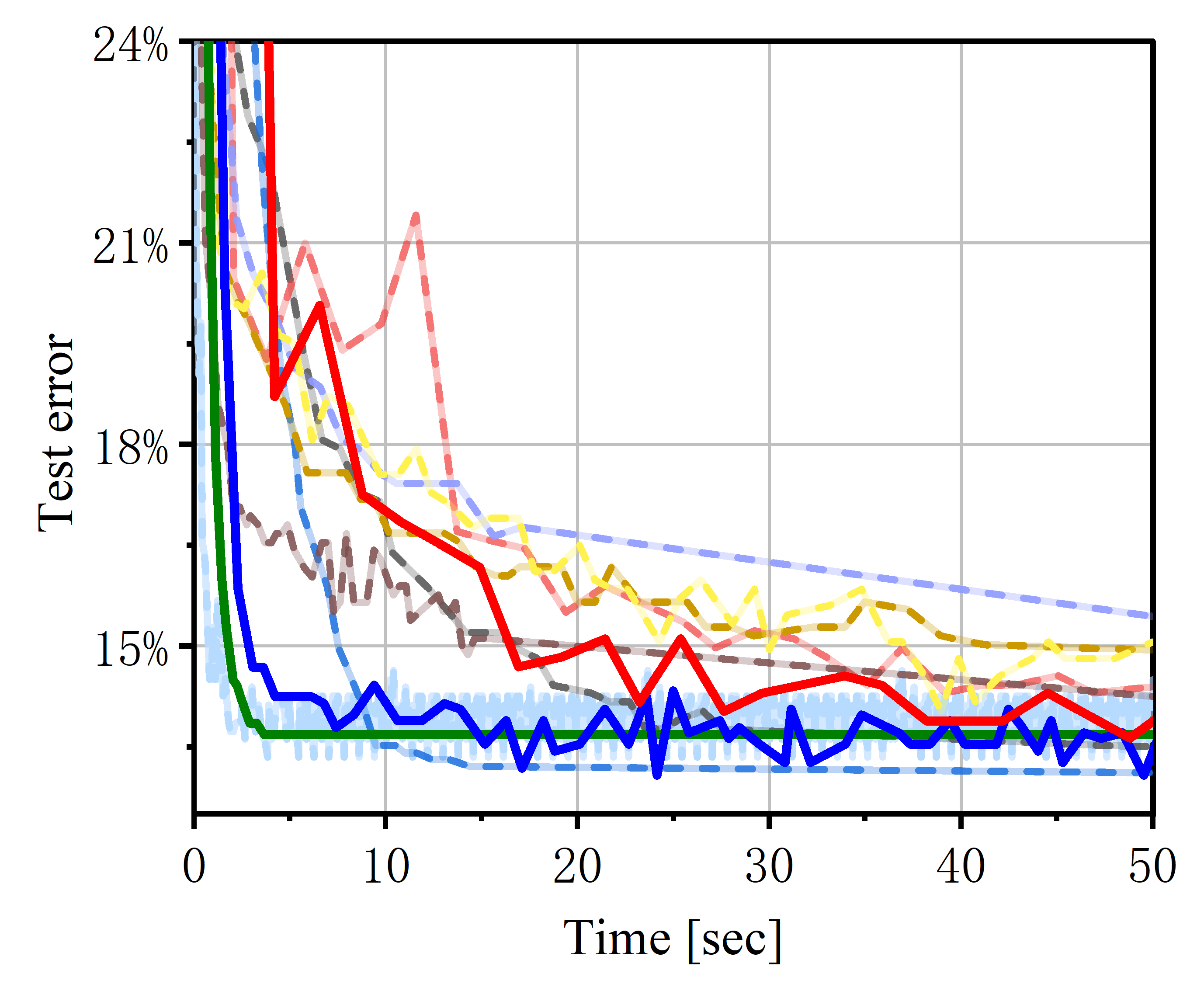}
		\label{datacleaning_5}
	}
    	\subfigure[Data hyper-cleaning ($\tilde{p}=0.7$)]{
		\centering
		\includegraphics[width=0.31\linewidth]{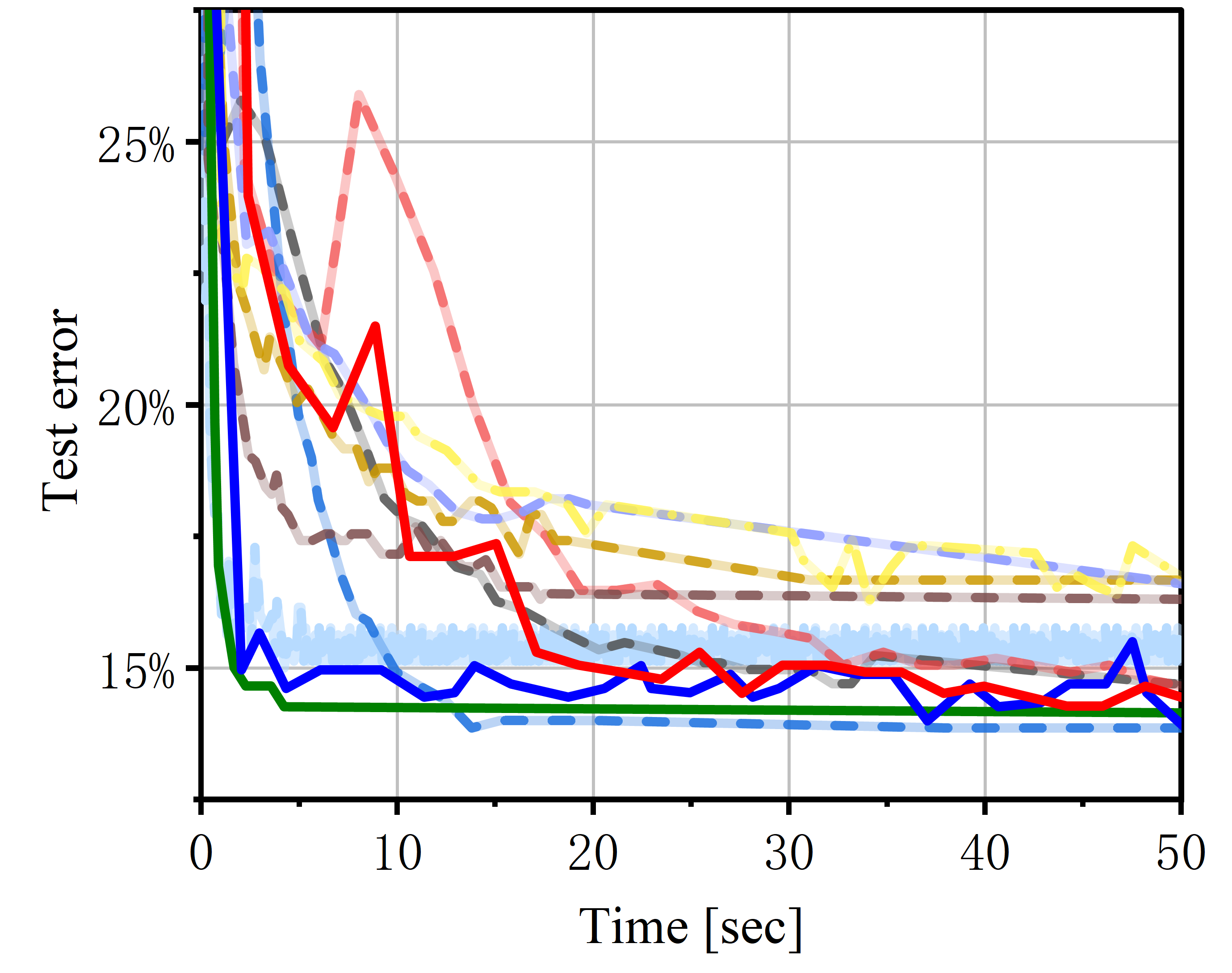}
		\label{datacleaning_7}
	}
    	\subfigure[Data hyper-cleaning ($\tilde{p}=0.9$)]{
		\centering
		\includegraphics[width=0.31\linewidth]{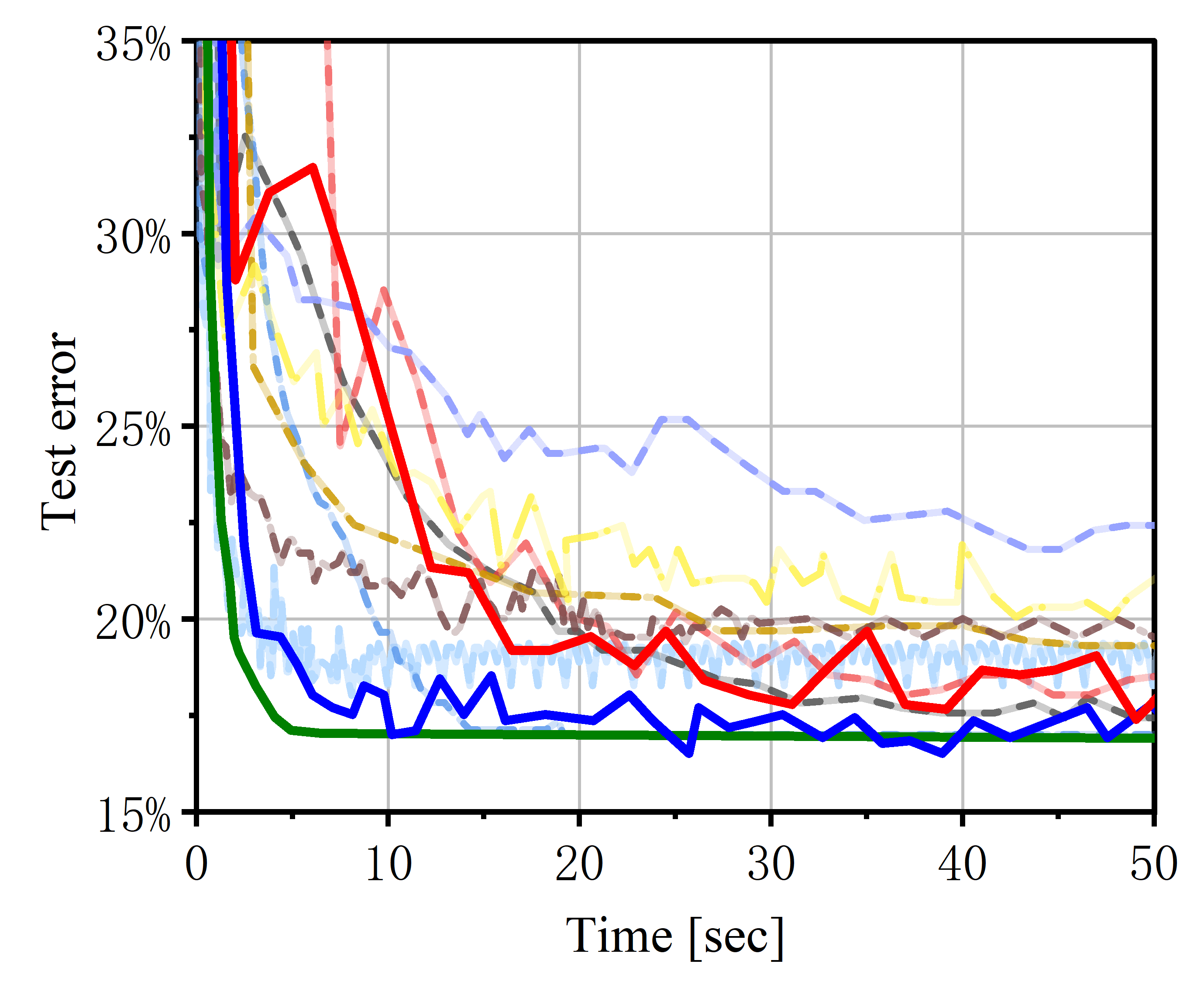}
		\label{datacleaning_9}
	}\\
		\subfigure[Logistic regression on \texttt{covtype}]{
		\centering
		\includegraphics[width=0.31\linewidth]{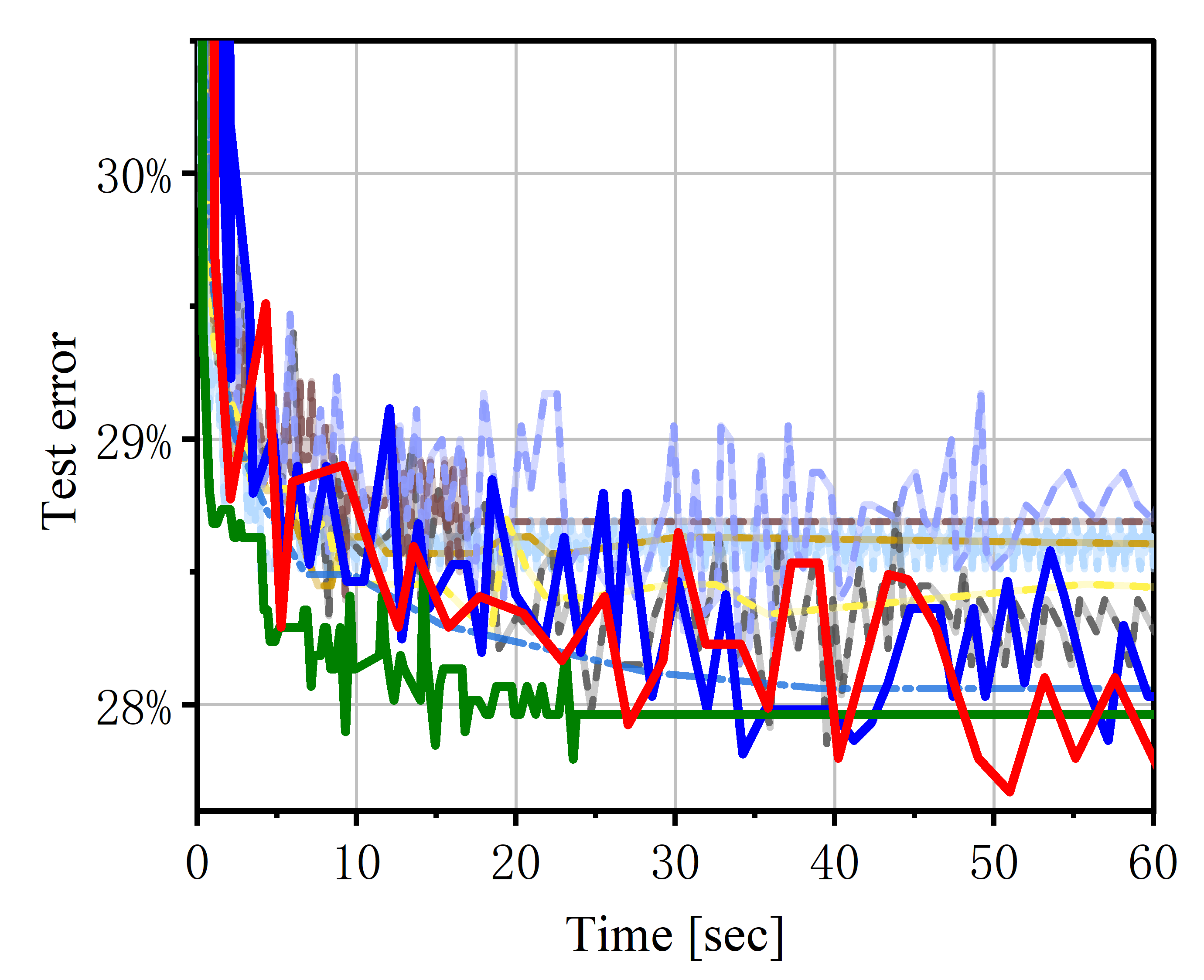}
		\label{cov}
	}
    	\subfigure[Logistic regression on \texttt{IJCNN1}]{
		\centering
		\includegraphics[width=0.31\linewidth]{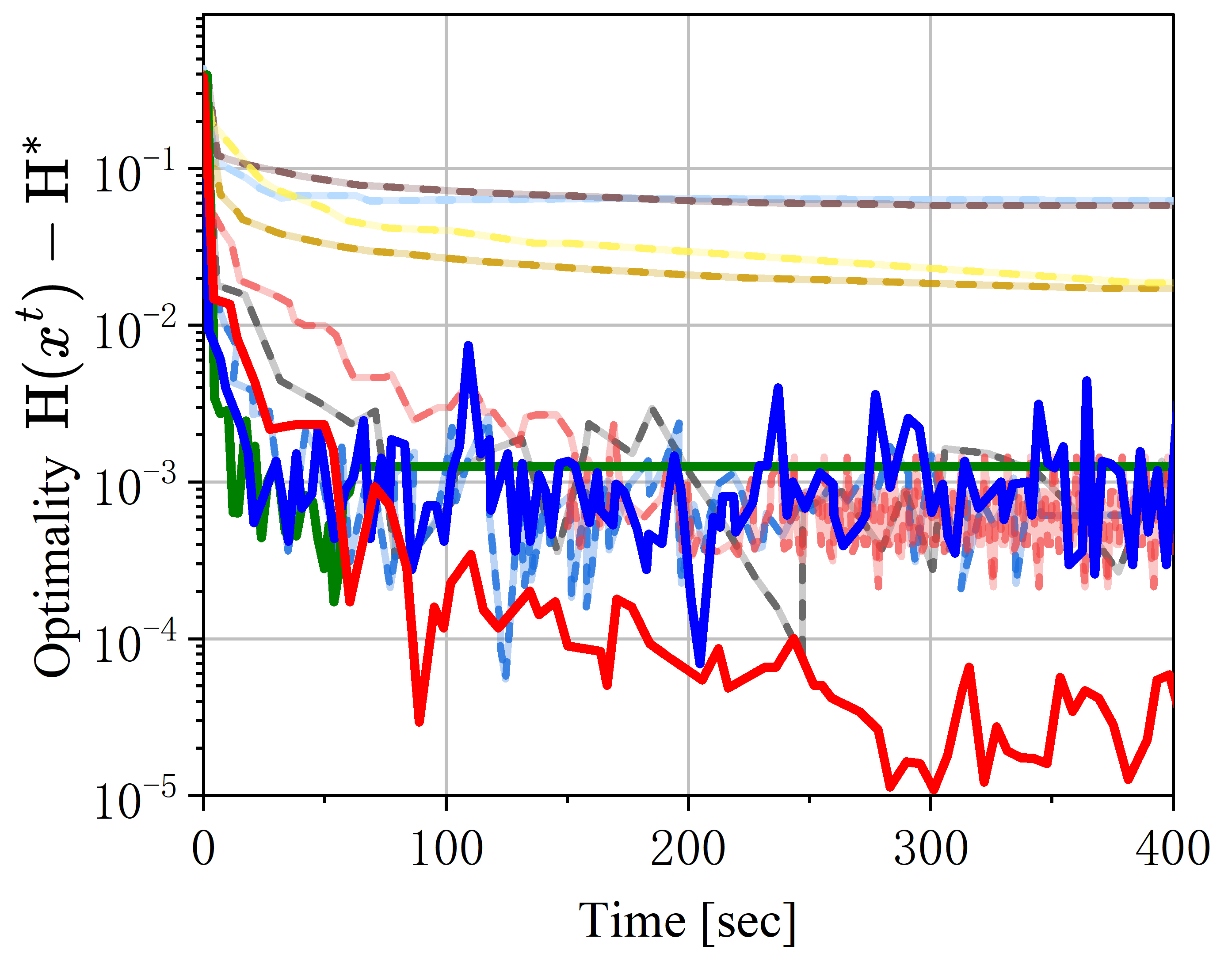}
		\label{ijcnn}
	}

	\caption{\textbf{Top:}
    Compare SPABA, SFFBA, and MSEBA with other benchmark algorithms for the data hyper-cleaning problem under different $\tilde{p}$. \textbf{Bottom:} Compare SPABA, SFFBA, and MSEBA with other benchmark algorithms for hyperparameter selection experiments on \texttt{covtype} and \texttt{IJCNN1}.}

\end{figure}

\subsection{Ablation Study}\label{section_ablation}
Our PnPBO framework introduces clipping for implicit variable updates and combines MA with the unbiased estimator \( \mathcal{A} \). In this section, we conduct ablation studies to further demonstrate the effects of these two techniques.

First, we perform an ablation study to assess the impact of the MA technique on updating UL variables when using unbiased estimators. Figure~\ref{cleaning_ma} presents the performance comparison of SOBA and SABA with their MA-enhanced counterparts, MA-SOBA and MA-SABA, in the $l^2$-penalized logistic regression task on the \texttt{IJCNN1} data set. We observe that the algorithms using the MA technique reach the minimum faster, demonstrating a potential acceleration effect.

Next, we compare the use of the clipping technique in the implicit variable update for SPABA, SFFBA and MSEBA.
Omitting the clipping step may degrade convergence speed, 
as shown in Figure~\ref{cov_clip}.

\begin{figure}[h]
	\centering
    	\subfigure[Logistic regression on \texttt{IJCNN1}]{
		\centering
		\includegraphics[width=0.4\linewidth]{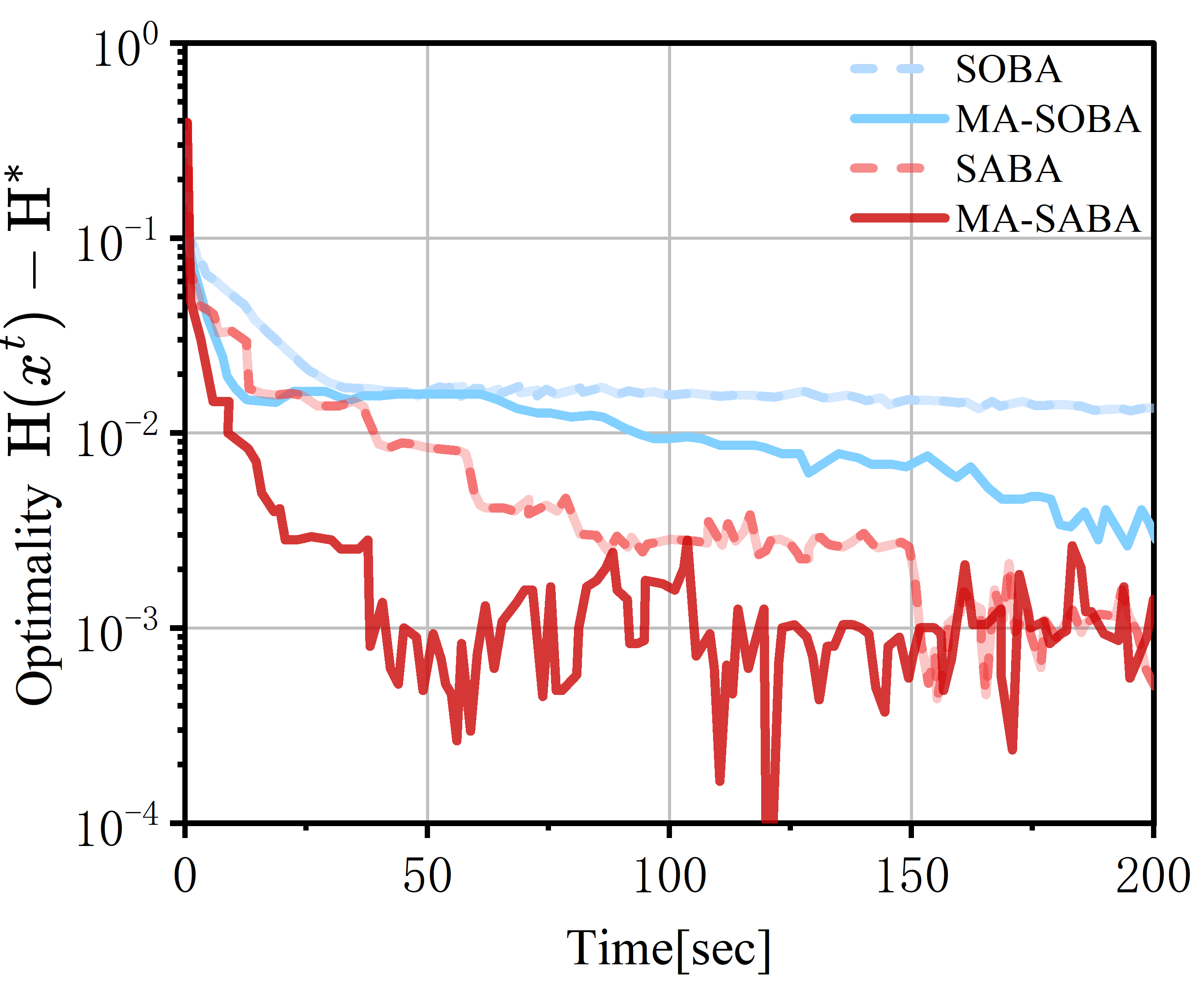}
		\label{cleaning_ma}
	}
	\subfigure[Logistic regression on \texttt{covtype}]{
		\centering
		\includegraphics[width=0.4\linewidth]{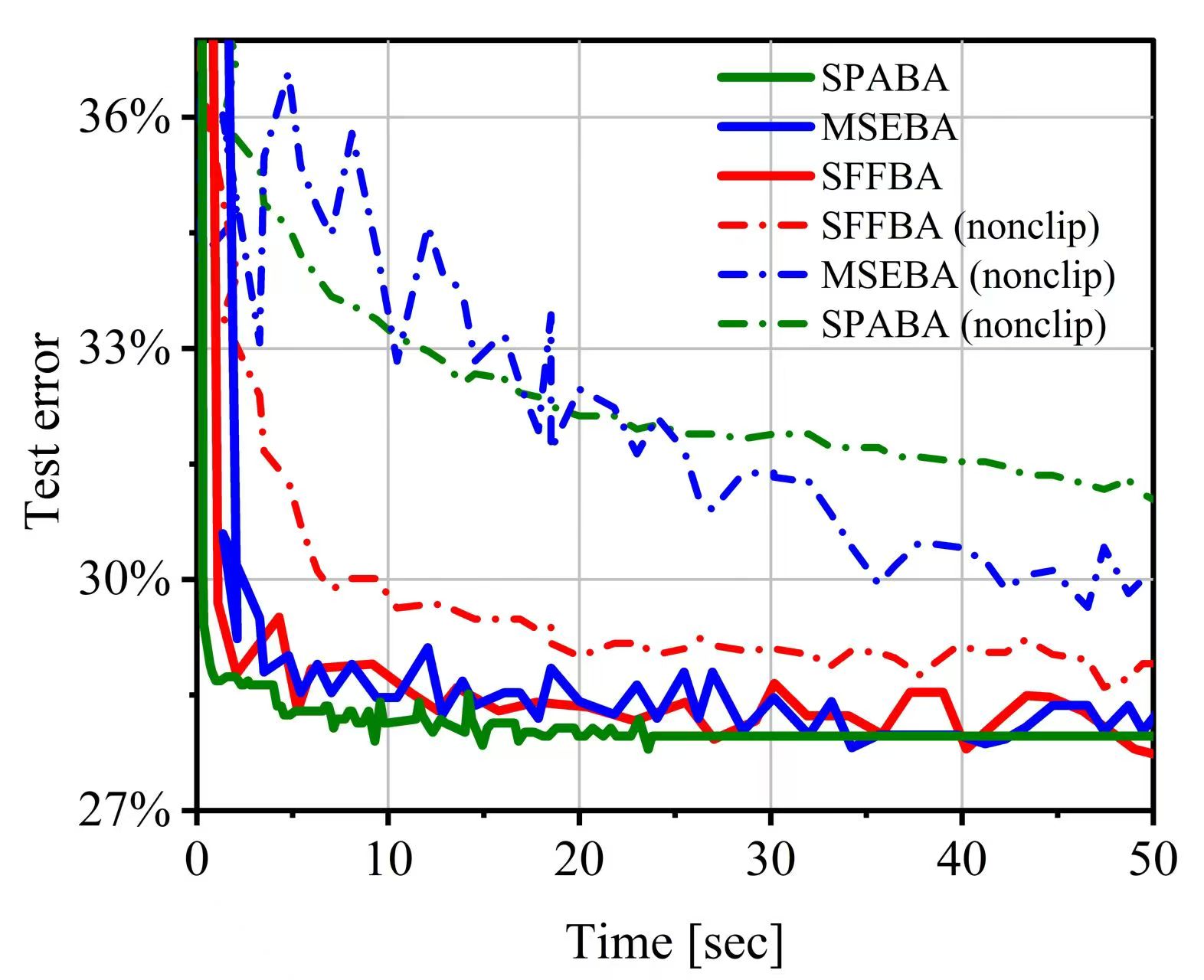}
		\label{cov_clip}
	}
	\caption{\textbf{Left:} Compare the performance of SOBA and MA-SOBA, and SABA and MA-SABA in the problem of hyperparameter selection experiment on the \texttt{IJCNN1} data set. \textbf{Right:} Compare the performance of SPABA, SFFBA, and MSEBA with their versions without the clipping technique in the problem of hyperparameter selection experiment on the \texttt{covtype} data set.
 }
	\label{ma}
\end{figure}
\section{Conclusion}

In this work, we propose the plug-and-play stochastic BLO algorithm framework, PnPBO, which enhances the framework in \citep{dagreou2022framework,arbel2022amortized} and introduces a more general convergence analysis framework. Based on this, we apply stochastic estimators such as SAGA, STORM, PAGE, ZeroSARAH, and their mixed strategies in the bilevel setting with convergence guarantees. We resolve the open question of whether the optimal complexity bounds for bilevel optimization are the same as those for single-level optimization.
It’s worth mentioning that the proposed algorithm framework still relies on second-order information. An interesting and promising direction is to use value function approaches \citep{kwon2024on,yao2025overcoming} to develop single-loop, Hessian-free stochastic bilevel algorithms.

\newpage

\appendix
\section{General Lemmas}\label{section_general_lemmas}
In this section, we present general results that will be used.
For proof details of Lemma \ref{Ly*}-\ref{dL}, see Appendix D.2. in the conference version \citep{chuspaba}.

\begin{lemma}\textbf{(Lipschitz continuity of $y^*(x)$ and $z^*(x)$, boundedness of $z^*(x)$)}\label{Ly*}

(1) Under the Assumptions \ref{assump LL}, $y^*(x)$ is $L_{y^*}$-Lipschitz continuous,
where $L_{y^*}={L_1^g}/{\mu}$.

(2) Under the Assumptions \ref{assump UL} and \ref{assump LL},
$z^*(x)$ is $L_{z^*}$-Lipschitz continuous, and for each $x$, $ \|z^*(x)\|\leq R$, where
$$L_{z^*}=\left({L^f}/{\mu}+{C^f L_2^g}/{\mu^2}\right)\left(1+{L_1^g}/{\mu}\right),\quad R= {C^f}/{\mu}.$$
\end{lemma}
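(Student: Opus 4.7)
The plan is to prove the two parts separately using only the implicit characterization of $y^*(x)$ and $z^*(x)$, together with strong convexity and the Lipschitz hypotheses.

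For part (1), I would work directly from the optimality condition $\nabla_2 g(x, y^*(x)) = 0$. Fix $x_1, x_2$ and abbreviate $y_i^* = y^*(x_i)$. By $\mu$-strong convexity of $g(x_1, \cdot)$,
\begin{equation*}
\mu \|y_1^* - y_2^*\|^2 \leq \langle \nabla_2 g(x_1, y_1^*) - \nabla_2 g(x_1, y_2^*),\, y_1^* - y_2^* \rangle.
\end{equation*}
Substituting $\nabla_2 g(x_1, y_1^*) = 0 = \nabla_2 g(x_2, y_2^*)$, the right-hand side becomes $\langle \nabla_2 g(x_2, y_2^*) - \nabla_2 g(x_1, y_2^*),\, y_1^* - y_2^*\rangle$, which the $L_1^g$-Lipschitz continuity of $\nabla g$ (Assumption \ref{assump LL}(b)) bounds by $L_1^g \|x_1 - x_2\| \|y_1^* - y_2^*\|$. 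Dividing yields $\|y_1^* - y_2^*\| \leq (L_1^g/\mu)\|x_1 - x_2\|$, which is the claim.

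For part (2), the boundedness is immediate: since $\nabla_{22}^2 g(x, y^*(x)) \succeq \mu I$ by strong convexity, we have $\|[\nabla_{22}^2 g(x, y^*(x))]^{-1}\| \leq 1/\mu$, and combining with Assumption \ref{assump UL}(a) gives $\|z^*(x)\| \leq C^f/\mu = R$. For Lipschitzness, I would set $H(x) := \nabla_{22}^2 g(x, y^*(x))$ and $v(x) := \nabla_2 f(x, y^*(x))$, so that $z^*(x) = H(x)^{-1} v(x)$, and apply the resolvent identity $H(x_1)^{-1} - H(x_2)^{-1} = H(x_1)^{-1}(H(x_2) - H(x_1))H(x_2)^{-1}$ to write
\begin{equation*}
z^*(x_1) - z^*(x_2) = H(x_1)^{-1}\bigl(v(x_1) - v(x_2)\bigr) + H(x_1)^{-1}\bigl(H(x_2) - H(x_1)\bigr) H(x_2)^{-1} v(x_2).
\end{equation*}
Using $\|H(x_i)^{-1}\| \leq 1/\mu$, $\|v(x_2)\| \leq C^f$, and the submultiplicativity of the operator norm, the goal reduces to bounding $\|v(x_1) - v(x_2)\|$ and $\|H(x_1) - H(x_2)\|$ in terms of $\|x_1 - x_2\|$.

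The main obstacle is to propagate the Lipschitz constants of $\nabla_2 f$ and $\nabla^2 g$ through the composition with $y^*(\cdot)$. Applying Assumptions \ref{assump UL}(b) and \ref{assump LL}(b) to the joint arguments $(x_i, y^*(x_i))$ and invoking part (1) gives
\begin{equation*}
\|v(x_1) - v(x_2)\| \leq L^f\bigl(\|x_1-x_2\| + \|y_1^*-y_2^*\|\bigr) \leq L^f(1 + L_1^g/\mu)\|x_1-x_2\|,
\end{equation*}
and an analogous estimate with $L_2^g$ in place of $L^f$ for $\|H(x_1)-H(x_2)\|$. Plugging these bounds into the decomposition yields exactly $\|z^*(x_1) - z^*(x_2)\| \leq (L^f/\mu + C^f L_2^g/\mu^2)(1 + L_1^g/\mu)\|x_1 - x_2\|$, which matches the claimed $L_{z^*}$.
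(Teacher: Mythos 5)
Your proposal is correct, and both parts follow the standard route (strong-convexity/optimality-condition argument for $y^*$, and the inverse-difference identity $H_1^{-1}-H_2^{-1}=H_1^{-1}(H_2-H_1)H_2^{-1}$ combined with part (1) for $z^*$), recovering the stated constants $L_{y^*}$, $L_{z^*}$, and $R$ exactly; this is the same approach the paper relies on, which it defers to Appendix D.2 of its conference version. The only cosmetic issue is your local reuse of the symbol $H(x)$ for $\nabla_{22}^2 g(x,y^*(x))$, which collides with the paper's total objective $H(x)$ and should be renamed.
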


\begin{lemma}\label{Hsmooth}\textbf{(smoothness of function $H$)}

Suppose Assumptions \ref{assump UL} and \ref{assump LL} hold, the function $H(x)$ is $L^H$-smooth, where $$L^H=L^f+\frac{2L^f L^g_2+\left(C^f\right)^2L^g_2}{\mu}+\frac{L^f \left(L_1^g\right)^2+2C^fL_1^gL_2^g}{\mu^2}+\frac{C^f\left(L_1^g\right)^2L_2^g}{\mu^3}.$$
\end{lemma}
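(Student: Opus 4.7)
The plan is to bound $\|\nabla H(x_1) - \nabla H(x_2)\|$ directly using the closed form of the hypergradient, namely
\[
\nabla H(x) = \nabla_1 f(x, y^*(x)) - \nabla_{12}^2 g(x, y^*(x))\, z^*(x),
\]
where $z^*(x) = [\nabla_{22}^2 g(x,y^*(x))]^{-1} \nabla_2 f(x,y^*(x))$. Splitting $\nabla H(x_1) - \nabla H(x_2)$ into the ``$\nabla_1 f$ part'' and the ``$\nabla_{12}^2 g \cdot z^*$ part'' is the natural starting point, since both pieces can be handled by standard Lipschitz manipulations once Lemma~\ref{Ly*} is in hand.

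First, I would control the first piece by adding and subtracting $\nabla_1 f(x_2, y^*(x_1))$ and applying Assumption~\ref{assump UL}(b) together with the Lipschitz continuity of $y^*$ from Lemma~\ref{Ly*}, yielding
\[
\|\nabla_1 f(x_1,y^*(x_1)) - \nabla_1 f(x_2,y^*(x_2))\| \leq L^f(1 + L_{y^*})\|x_1-x_2\|.
\]
For the second piece I would use the product-rule-style decomposition
\[
A_1 z_1 - A_2 z_2 = (A_1 - A_2) z_1 + A_2 (z_1 - z_2)
\]
with $A_i = \nabla_{12}^2 g(x_i, y^*(x_i))$ and $z_i = z^*(x_i)$. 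The three ingredients needed are: (i) the Lipschitzness of $\nabla^2 g$ together with that of $y^*$, giving $\|A_1 - A_2\| \leq L^g_2(1 + L_{y^*})\|x_1-x_2\|$; (ii) the uniform bound $\|z^*(x_1)\| \leq R = C^f/\mu$ from Lemma~\ref{Ly*}; (iii) the operator bound $\|A_2\| \leq L^g_1$ coming from Assumption~\ref{assump LL}(b), combined with Lipschitzness of $z^*$ from Lemma~\ref{Ly*}, yielding $\|z^*(x_1) - z^*(x_2)\| \leq L_{z^*}\|x_1-x_2\|$.

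Adding up the three contributions gives the bound
\[
\|\nabla H(x_1) - \nabla H(x_2)\| \leq \bigl[L^f(1+L_{y^*}) + L^g_2(1+L_{y^*})R + L^g_1 L_{z^*}\bigr]\|x_1-x_2\|.
\]
The final step is to substitute $L_{y^*} = L_1^g/\mu$, $R = C^f/\mu$, and the explicit form of $L_{z^*}$ from Lemma~\ref{Ly*}, then expand and collect terms according to the power of $1/\mu$ to match the stated $L^H$. I do not expect any serious obstacle: the only care needed is the bookkeeping in this last algebraic collection, since the cross-terms between $L^f, C^f, L^g_1, L^g_2$ must be grouped correctly so that the coefficients of $\mu^{-1}, \mu^{-2}, \mu^{-3}$ match the claimed formula. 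The main conceptual step is simply choosing the product-rule splitting for the $\nabla_{12}^2 g \cdot z^*$ term so that the boundedness of $z^*$ (rather than an unavailable uniform bound on $\nabla_2 f$) can be invoked.
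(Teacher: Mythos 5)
Your decomposition is the standard one for this lemma (the paper defers its own proof to Appendix D.2 of the conference version, but that argument is the same product-rule splitting of $\nabla_{12}^2 g\cdot z^*$ combined with Lemma~\ref{Ly*}), and every individual bound you invoke is available under Assumptions~\ref{assump UL}--\ref{assump LL}: $\|\nabla_{12}^2 g\|\leq L_1^g$ from Lipschitzness of $\nabla G_j$, $L_2^g$-Lipschitzness of $\nabla_{12}^2 g$, and the bounds $\|z^*\|\leq R=C^f/\mu$, $L_{y^*}=L_1^g/\mu$, $L_{z^*}=(L^f/\mu+C^fL_2^g/\mu^2)(1+L_1^g/\mu)$ from Lemma~\ref{Ly*}. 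So the proof goes through.

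One point you should be aware of when you do the final bookkeeping: expanding your bound
\[
L^f(1+L_{y^*})+L_2^g(1+L_{y^*})R+L_1^gL_{z^*}
\]
yields
\[
L^f+\frac{2L^fL_1^g+C^fL_2^g}{\mu}+\frac{L^f(L_1^g)^2+2C^fL_1^gL_2^g}{\mu^2}+\frac{C^f(L_1^g)^2L_2^g}{\mu^3},
\]
whose $\mu^{-1}$ numerator is $2L^fL_1^g+C^fL_2^g$, not the $2L^fL_2^g+(C^f)^2L_2^g$ printed in the lemma. The $\mu^{-2}$ and $\mu^{-3}$ terms match exactly. The printed $\mu^{-1}$ term cannot arise from any product of the available constants at that order (and $(C^f)^2L_2^g$ is dimensionally inconsistent with the rest), so this is almost certainly a typo in the statement rather than an error in your derivation; your expression is the standard smoothness constant for the hypergradient. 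Just be explicit that your argument establishes smoothness with your constant, which is what the rest of the paper actually needs.
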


\begin{lemma}\label{dL}
Suppose Assumptions \ref{assump UL} and \ref{assump LL} hold.
Then the following inequalities hold
\begin{gather}
\mathbb{E} \left[\left\|D^y_k\right\|^2\right] \leq\left(L_1^g\right)^2 \mathbb{E}\left[\| y_k-y^*_k \|^2\right],\quad
 \mathbb{E}\left[\left\|D^z_k\right\|^2\right]
\leq L_z^2\mathbb{E}\left[\left\|z_k-z^*_k\right\|^2\right]+ L_z^2\mathbb{E}\left[\left\|y_k-y^*_k\right\|^2\right],\nonumber\\
\mathbb{E}\left[\left\|D^x_k-\nabla H\left(x_k\right)\right\|^2\right]
 \leq
  3c_1 \mathbb{E}\left[\left\|y_k-y^*_k\right\|^2\right]
 +3c_2 \mathbb{E}\left[\left\|z_k-z^*_k\right\|^2\right],\label{DH}
\end{gather}
where
$L_z^2=\max
\{3\left(L_1^g\right)^2,
3R^2\left(L_2^g\right)^2+3\left(L^f\right)^2\}$.
\end{lemma}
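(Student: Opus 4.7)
The plan is to prove each of the three bounds separately by the standard strategy of rewriting the left-hand quantity as a difference that vanishes at the optimal point $(y^*_k, z^*_k)$, and then applying the Lipschitz continuity in Assumptions \ref{assump UL} and \ref{assump LL} together with the bound $\|z^*_k\| \leq R$ from Lemma \ref{Ly*}.

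For the first inequality on $\mathbb{E}[\|D^y_k\|^2]$, I would use that $D^y_k = \nabla_2 g(x_k, y_k)$ and that the optimality condition for the lower-level problem gives $\nabla_2 g(x_k, y^*_k) = 0$. Then $\|D^y_k\| = \|\nabla_2 g(x_k, y_k) - \nabla_2 g(x_k, y^*_k)\| \leq L_1^g \|y_k - y^*_k\|$ by the $L_1^g$-smoothness of $g$, and squaring and taking expectations yields the claim.

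For the second inequality on $\mathbb{E}[\|D^z_k\|^2]$, the key observation is that the defining identity $\nabla_{22}^2 g(x_k, y^*_k) z^*_k = \nabla_2 f(x_k, y^*_k)$ lets us write
\begin{align*}
D^z_k &= \nabla_{22}^2 g(x_k, y_k)(z_k - z^*_k) + \bigl(\nabla_{22}^2 g(x_k, y_k) - \nabla_{22}^2 g(x_k, y^*_k)\bigr) z^*_k \\
&\quad - \bigl(\nabla_2 f(x_k, y_k) - \nabla_2 f(x_k, y^*_k)\bigr).
\end{align*}
Applying the inequality $\|a+b+c\|^2 \leq 3\|a\|^2 + 3\|b\|^2 + 3\|c\|^2$, and then using $\|\nabla_{22}^2 g\| \leq L_1^g$, $L_2^g$-Lipschitzness of $\nabla^2 g$, $L^f$-Lipschitzness of $\nabla f$, and $\|z^*_k\| \leq R$, produces terms of the form $3(L_1^g)^2\|z_k-z^*_k\|^2$ and $(3R^2(L_2^g)^2 + 3(L^f)^2)\|y_k - y^*_k\|^2$. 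Bounding both coefficients by $L_z^2$ gives the result.

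The third inequality on $\mathbb{E}[\|D^x_k - \nabla H(x_k)\|^2]$ follows by the same telescoping idea. Using $\nabla H(x_k) = \nabla_1 f(x_k, y^*_k) - \nabla_{12}^2 g(x_k, y^*_k) z^*_k$, I would write
\begin{align*}
D^x_k - \nabla H(x_k) &= \bigl(\nabla_1 f(x_k, y_k) - \nabla_1 f(x_k, y^*_k)\bigr) - \nabla_{12}^2 g(x_k, y_k)(z_k - z^*_k) \\
&\quad - \bigl(\nabla_{12}^2 g(x_k, y_k) - \nabla_{12}^2 g(x_k, y^*_k)\bigr) z^*_k,
\end{align*}
apply the three-term Cauchy–Schwarz bound, and control the three pieces by $(L^f)^2\|y_k-y^*_k\|^2$, $(L_1^g)^2\|z_k-z^*_k\|^2$, and $(L_2^g R)^2 \|y_k - y^*_k\|^2$ respectively. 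Collecting gives precisely $3c_1\|y_k-y^*_k\|^2 + 3c_2\|z_k-z^*_k\|^2$ with the stated constants.

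No part of this is really hard; the only subtlety is making sure to use the right decomposition in each case so that the $z^*_k$ factor is the one that receives the $\|\cdot\| \leq R$ bound from Lemma \ref{Ly*} (rather than the iterate $z_k$, which is only clipped to a possibly different radius). All three bounds then follow by taking expectations, since the inequalities hold deterministically pointwise.
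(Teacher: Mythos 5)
Your proof is correct and follows the standard decomposition argument (exploiting $\nabla_2 g(x_k,y^*_k)=0$, $\nabla_{22}^2 g(x_k,y^*_k)z^*_k=\nabla_2 f(x_k,y^*_k)$, the Lipschitz constants from Assumptions \ref{assump UL}--\ref{assump LL}, and $\|z^*_k\|\le R$ from Lemma \ref{Ly*}), which is exactly the route the paper takes in the conference-version appendix it cites. The constants $3c_1$, $3c_2$, and $L_z^2$ all come out as stated, so nothing is missing.
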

\begin{lemma}\label{Dx{k+1}-Dx{k}}
    Under the Assumptions \ref{assump UL} and \ref{assump LL}, we have
\begin{align*}
    \left\|D^x_{k+1}-D^x_k\right\|^2
 \leq
     4c_1\left(\alpha_k^2\left\|v_k^x\right\|^2
                +\beta_k^2\left\|v_k^y\right\|^2  \right)
    +4c_2\gamma_k^2\left\|v_k^z\right\|^2.
    \end{align*}
\end{lemma}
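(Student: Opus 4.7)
The plan is to expand the difference $D^x_{k+1} - D^x_k$ by adding and subtracting two intermediate Hessian-vector terms so that each piece can be controlled by a single Lipschitz estimate. Specifically, I would write
\begin{align*}
D^x_{k+1} - D^x_k
= \bigl[\nabla_1 f(x_{k+1}, y_{k+1}) - \nabla_1 f(x_k, y_k)\bigr]
- \bigl[\nabla_{12}^2 g(x_{k+1}, y_{k+1}) - \nabla_{12}^2 g(x_k, y_k)\bigr] z_{k+1}
- \nabla_{12}^2 g(x_k, y_k)\,(z_{k+1} - z_k).
\end{align*}
The first two bracketed pieces both scale with the joint displacement in $(x,y)$, and the third scales with $\|z_{k+1}-z_k\|$, matching the structure of the target bound with $c_1=(L^f)^2+(L_2^g R)^2$ and $c_2=(L_1^g)^2$.

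Next, I would bound each piece. For the first term, Assumption~\ref{assump UL}(b) (mean-squared smoothness of $f$) gives $\|\nabla_1 f(x_{k+1},y_{k+1})-\nabla_1 f(x_k,y_k)\|^2 \leq (L^f)^2\bigl(\|x_{k+1}-x_k\|^2+\|y_{k+1}-y_k\|^2\bigr)$. For the second term, Assumption~\ref{assump LL}(b) gives Lipschitz continuity of $\nabla^2 g$ with constant $L_2^g$, and the clipping in Line~10 of Algorithm~\ref{alg_framework} ensures $\|z_{k+1}\|\leq R$; combining these yields $\|[\nabla_{12}^2 g(x_{k+1},y_{k+1})-\nabla_{12}^2 g(x_k,y_k)]z_{k+1}\|^2 \leq (L_2^g R)^2\bigl(\|x_{k+1}-x_k\|^2+\|y_{k+1}-y_k\|^2\bigr)$. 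For the third term, the Lipschitz continuity of $\nabla g$ from Assumption~\ref{assump LL}(b) implies the operator bound $\|\nabla_{12}^2 g(x_k,y_k)\|\leq L_1^g$, so $\|\nabla_{12}^2 g(x_k,y_k)(z_{k+1}-z_k)\|^2\leq (L_1^g)^2 \|z_{k+1}-z_k\|^2$.

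Then I would invoke the update rules and clipping. Directly, $x_{k+1}-x_k = -\alpha_k v_k^x$ and $y_{k+1}-y_k = -\beta_k v_k^y$. For $z$, since clipping is the projection onto the ball of radius $R$ and $z_k$ itself lies in that ball, the projection is non-expansive, yielding $\|z_{k+1}-z_k\|= \|\mathrm{Clip}(z_k-\gamma_k v_k^z;R)-\mathrm{Clip}(z_k;R)\|\leq \gamma_k\|v_k^z\|$. Finally, applying Young's inequality $\|u+v\|^2\leq 2\|u\|^2+2\|v\|^2$ twice---first to group the two $(x,y)$-displacement terms and separate off the $z$-displacement term---produces the stated bound $4c_1(\alpha_k^2\|v_k^x\|^2+\beta_k^2\|v_k^y\|^2)+4c_2\gamma_k^2\|v_k^z\|^2$ (the constant on the $z$-part is in fact loose by a factor of two, but the stated form suffices).

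There is no real obstacle here: the argument is a routine triangle-inequality plus Lipschitz estimate. The only point requiring attention is the justification of $\|z_{k+1}\|\leq R$ and the non-expansiveness of the clip, both of which are immediate from the definition $\mathrm{Clip}(z;R)=\min\{1,R/\|z\|\}z$ coinciding with Euclidean projection onto the ball of radius $R$.
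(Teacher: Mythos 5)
Your proposal is correct and follows essentially the same route as the paper: the identical three-term decomposition (gradient difference of $f$, Hessian difference applied to $z_{k+1}$, and fixed Hessian applied to $z_{k+1}-z_k$), the same Lipschitz bounds using the clipping-induced bound $\|z_{k+1}\|\leq R$, and Young's inequality to collect constants into $c_1$ and $c_2$. The only cosmetic difference is the grouping order in Young's inequality (the paper uses coefficients $2,4,4$ where you get $4,4,2$), and your explicit justification of $\|z_{k+1}-z_k\|\leq\gamma_k\|v_k^z\|$ via non-expansiveness of the projection is a point the paper leaves implicit.
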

\begin{proof}
Using Assumptions \ref{assump UL} and \ref{assump LL}, we have
    \begin{align*}
        \left\|D^x_{k+1}-D^x_k\right\|^2
    \leq&
        2\left\|\nabla_1 f(x_{k+1}, y_{k+1})- \nabla_1 f(x_{k}, y_{k})\right\|^2
        +4\left\| \nabla_{12}^2 g(x_{k}, y_{k})( z_{k} - z_{k+1} )\right\|^2
           \\
        & +4\left\| \nabla_{12}^2 g(x_{k}, y_{k})z_{k+1} - \nabla_{12}^2 g(x_{k+1}, y_{k+1}) z_{k+1}
        \right\|^2
        \\
        \leq &
         \left[2(L^f)^2 + 4 R^2 (L_2^g)^2 \right]\left(\alpha_k^2\|v_k^x\|^2 + \beta_k^2\|v_k^y\|^2  \right)
         +4(L_1^g)^2\gamma_k^2\|v_k^z\|^2.
    \end{align*}
    The second inequality also uses the boundedness of \( z_{k+1} \), which arises from the clipping.
\end{proof}

\section{{Single-level Instantiations }
}
\label{sec:single_instantiations}

In Table \ref{tab:singleloop_mapping}, we list representative single-level instantiations of the abstract interface inequalities from Section~\ref{section_single}.

\begin{table}[h]
\centering
\small
\begin{tabular}{
@{\hspace{5pt}}
m{0.42\linewidth}
@{\hspace{16pt}}
m{0.30\linewidth}
@{\hspace{16pt}}
m{0.18\linewidth}
@{\hspace{3pt}}
}
\toprule[1.2pt]
\renewcommand{\arraystretch}{1.15}
\setlength{\extrarowheight}{0pt}
\multirow{2}{*}{\textbf{Method}} &
\multirow{2}{*}{\centering\shortstack{\textbf{Active terms}\\\textbf{in \eqref{lemma_single_vr}--\eqref{lemma_single_memory}}}} &
\multirow{2}{*}{\centering\shortstack{\textbf{Coefficients}\\\textbf{in \eqref{condition_single_unbiased} or \eqref{condition_single_biased}}}} \\
& & \\
\midrule\addlinespace[4pt]
\renewcommand{\arraystretch}{2}
\setlength{\extrarowheight}{15pt}
\textbf{SGD (Unbiased)}\par

$ v_k = \nabla h_{P_k}(u_k)$;\par
step size $a_k=\mathcal{O}(K^{-{1}/{2}})$,\par
batch size $p_k=1$
&
$\Delta_k=\sigma_h^2$\par
(Assumption: there exist positive constant $\sigma_h$ such that $\mathbb{E}[\|\nabla h(u)-\nabla h_q(u)\|^2]\leq \sigma_h^2$)
&
$\text{Coef}_k=\mathcal{O}(a_k^2)$\par
$\text{Coef}'_k =0$
\\\addlinespace[3pt]
\midrule\midrule\addlinespace[5pt]
\textbf{SAGA (Unbiased)}\par

$\begin{aligned}
v_k
= &\nabla h_{P_k}(u_k) - \nabla h(\tilde{u}_{k};P_k)
\\&
+\nabla h(\tilde{u}_{k};[Q]),\end{aligned}$\par
where $[Q]=\{1,\cdots,Q\}$;\par

update
$\tilde{u}_{k+1}^{q}=
\begin{cases}
u_{k}, & q\in P_{k},\\
\tilde{u}_{k}^{q}, & q\notin P_{k};
\end{cases}$\par

step size $a_k=\mathcal{O}({Q}^{-\frac{2}{3}})$,
batch size $p_k=1$
&
$\theta_k = (4+4Q)a_k^2L_h^2\ $\par
$\eta_k = (5+4Q)L_h^2$\par
$ \tau_k = (1-1/(4Q))L^2_h$\par
$\delta_k = \frac{1}{Q}\sum\limits_{q=1}^Q\mathbb{E}[\|u_k-\tilde{u}_{k}^q\|^2]$\par
$\lambda_k = 1-1/(4Q)$\par
$\hat{\eta}_k=1$,
$
\hat{\eta}'_k=2+2Q$
&
$\text{Coef}_k = \mathcal{O}(Q^{-\frac{4}{3}})$\par
$\text{Coef}'_k = \mathcal{O}(Q^{-\frac{1}{3}})$
\\\addlinespace[5pt]
\midrule
\addlinespace[5pt]
\textbf{ZeroSARAH (Biased)}\par

$\begin{aligned}
v_k
&=(1-\hat{\rho}_k)(v_{k-1}-\nabla h_{P_k}(u_{k-1}))\\
&+\hat{\rho}_k(\nabla h(\tilde{u}_{k-1};[Q])-\nabla h(\tilde{u}_{k-1};P_{k}))\\
&+\nabla h_{P_k}(u_{k});
\end{aligned}$\par

update
$\tilde{u}_{k}^{q}=
\begin{cases}
u_{k}, & q\in P_{k},\\
\tilde{u}_{k-1}^{q}, & q\notin P_{k};
\end{cases}$
\par
step size $a_k=\mathcal{O}(1)$, parameter $\hat{\rho}_k=\mathcal{O}(1/\sqrt{Q})$,
batch size $p=\mathcal{O}(\sqrt{Q})$
&
$\theta_k = (1-\hat{\rho}_{k+1})^2$\par
$\eta_k = 2L_h^2/p$\par
$\tau_k = {2L_h^2\hat{\rho}_k^2}/{p}$\par
$\delta_k = \frac{1}{Q}\sum\limits_{q=1}^Q\mathbb{E}[\|u_k-\tilde{u}_{k}^q\|^2]$\par
$\lambda_k = 1-p_k/2Q$\par
$\hat{\eta}_k = 3Q /p$
\par
&
$\text{Coef}_k=\mathcal{O}\left(\frac{\alpha_k}{\hat{\rho}_k}\right)$\par
$\text{Coef}_k'=\mathcal{O}(\alpha_k\hat{\rho}_k)$
\\\addlinespace[5pt]
\midrule\addlinespace[5pt]

\textbf{PAGE (Biased)}\par

with probability $\hat{\rho}$, set
$v_k=\nabla h(u_k);$
otherwise, set
$v_k=v_{k-1}+\nabla h_{P_k}(u_{k})-\nabla h_{P_k}(u_{k-1})$
\par
step size $a_k=\mathcal{O}(1)$, batch size $p=\sqrt{Q}$,
probability $\hat{\rho}=p_k/(Q+p_k)$
&
$\theta_k=(1-\hat{\rho})$\par
$\eta_k=(1-\hat{\rho})L^2/p$
&
$\text{Coef}_k=\mathcal{O}(\alpha_k/\hat{\rho})$\par
$\text{Coef}_k'=0$
\\\addlinespace[5pt]
\midrule[1.2pt]
\end{tabular}
\caption{Single-level instantiations of the abstract inequalities in Section~\ref{section_single}.\\
\footnotesize{
In the \emph{Active terms} column, we report only nonzero coefficients.\\
The SAGA row is based on Lemma~C.5 in \citep{dagreou2022framework}; the coefficients reported here are obtained by specializing that lemma to the single-level finite-sum problem and applying the bound \(\mathbb{E}[\|v_k-\nabla h(u_k)\|^2]\leq L_h^2\,\delta_k\) together with simple constant relaxations.
The ZeroSARAH row follows from Lemmas~2--3 in \citep{li2021zerosarah}.
The PAGE row follows from Lemma~3 in \citep{li2021page}.
}}\label{tab:singleloop_mapping}
\end{table}

\section{The Missing Proof in Section \ref{section_Convergence_Analysis_Framework}}
\begin{proof}\textbf{(Proof of Lemma \ref{lemma_ma})}
When \( \mathcal{A} \) is an unbiased estimator, it is necessary to combine it with the MA technique, i.e.,
$v_{k+1}^x=(1-\rho_{k})v_{k}^x + \rho_{k} \hat{v}_{k+1}^x$.
Using the unbiasedness of $\hat{v}_{k+1}^x$, we obtain
\begin{align*}
 \mathbb{E}\big[\big\|v^x_{k+1}-\nabla H\left(x_{k+1}\right)\big\|^2\big]
=&  \mathbb{E}\Big[\Big\|\left(1-\rho_k\right) \big(v^x_k-\nabla H(x_k)\big)
+\nabla H(x_k)- \nabla H(x_{k+1})
+ \rho_k D^x_k \\
&
 -\rho_k \nabla H(x_k)
+ \rho_k \left( D^x_{k+1} - D^x_k\right)\Big\|^2\Big]\\
&+ \rho_k^2\mathbb{E}\left[\left\| \hat{v}_{k+1}^x- D_{k+1}^x\right\|^2\right].
\end{align*}
Based on the convexity of $\|\cdot\|^2$, we can derive the following
\begin{align*}
 &\mathbb{E}\big[\big\|v^x_{k+1}-\nabla H\left(x_{k+1}\right)\big\|^2\big]
\leq
\left(1-\rho_k\right) \mathbb{E}\left[\left\|v^x_k-\nabla H\left(x_k\right)\right\|^2\right]
+ \rho_k^2\mathbb{E}\left[\left\| \hat{v}_{k+1}^x-  D^x_{k+1}\right\|^2\right]\\
& \quad\quad +\rho_k \mathbb{E}\Big[\Big\|D^x_{k+1} - D^x_k
+D^x_k -  \nabla H(x_k)
+\frac{\nabla H\left(x_k\right)-\nabla H\left(x_{k+1}\right)}{\rho_k}\Big\|^2\Big]  \\
&\leq
\left(1-\rho_k\right) \mathbb{E}\left[\left\|v^x_k-\nabla H\left(x_k\right)\right\|^2\right]
+ \rho_k^2\mathbb{E}\left[\left\| \hat{v}_{k+1}^x-  D^x_{k+1}\right\|^2\right]
+3\rho_k \mathbb{E}[\|D^x_{k+1} - D^x_k\|^2]
\\
&\quad+3\rho_k \mathbb{E}[\|D^x_k-  \nabla H(x_k)\|^2]
+ \frac{3}{\rho_k} \mathbb{E}\left[\left\|\nabla H\left(x_k\right)-\nabla H\left(x_{k+1}\right)\right\|^2\right] .
\end{align*}
Using \eqref{DH}, Lemma \ref{Dx{k+1}-Dx{k}} and the $L^H-$smoothness of \( H \), we conclude the proof.
\end{proof}

\begin{proof}
{\textbf{(Proof of Lemma \ref{z-z*_biased})}}

Based on the definition of $z_{k+1}$ and utilizing Young's inequality twice, we obtain
	\begin{eqnarray*}
		\left\|z_{k+1}-z^*\left(x_{k+1}\right)\right\|^2
        & \leq & \left(1+\delta_k'\right)\left\|z_{k+1}-z^*\left(x_k\right)\right\|^2
		+ \left(1+\frac{1}{\delta_k'}\right)L_{z^*}^2\alpha_k^2\left\|v_k^x\right\|^2.
	\end{eqnarray*}
    Since \(\|z^*(x_k)\|\le R\), the clipping operator \(\mathrm{Clip}(\cdot;R)\) coincides with the Euclidean projection onto \(B(0,R)\), and hence it is nonexpansive.
	Using this nonexpansiveness and applying Young's inequality, we have
	\begin{align*}
		\left\|z_{k+1}-z^*\left(x_k\right)\right\|^2
        &= \left\|\text{Clip}(z_k-\gamma_k v_k^z;R)-z^*\left(x_k\right)\right\|^2
		\\
		&\leq \left\|z_k-\gamma_k v_k^z-z^*\left(x_k\right)\right\|^2
        \\
		& \leq\left(1+\frac{\mu\gamma_k}{4}\right)\left\|z_k-\gamma_k D^z_k-z^*\left(x_k\right)\right\|^2
		+\left(1+\frac{4}{\mu\gamma_k}\right)\| \gamma_k\left(v_k^z-D^z_k\right) \|^2.
	\end{align*}
    For the first term, we use the identity
\(\nabla_{22}^2 g(x_k,y^*(x_k))z^*(x_k)-\nabla_2 f(x_k,y^*(x_k))=0\)
to obtain the decomposition
\begin{align*}
&\left\|z_k-\gamma_k D_k^z-z^*\left(x_k\right)\right\|^2\notag
 = \Big\|
(I-\gamma_k\nabla_{22}^2g(x_k,y_{k}))(z_k-z^*(x_k))
\notag\\
&\quad\quad\quad+\gamma_k\big((\nabla_{22}^2g(x_k,y^*(x_k))-\nabla_{22}^2g(x_k,y_k))z^*(x_k)+\nabla_2f(x_k,y_k)-\nabla_2f(x_k,y^*(x_k))\big) \Big\|^2.\notag
\end{align*}
Applying Young's inequality yields
\begin{align*}
&\left\|z_k-\gamma_k D_k^z-z^*\left(x_k\right)\right\|^2 \\
&\leq
\left(1+\frac{\gamma_k\mu}{2}\right)
\left\|(I-\gamma_k\nabla_{22}^2g(x_k,y_k))(z_k-z^*(x_k))\right\|^2 \\
&\quad+
\left(2+\frac{4}{\gamma_k\mu}\right)\gamma_k^2
\left\|\nabla_{22}^2g(x_k,y^*(x_k))-\nabla_{22}^2g(x_k,y_k)\right\|^2
\left\|z^*(x_k)\right\|^2 \\
&\quad+
\left(2+\frac{4}{\gamma_k\mu}\right)\gamma_k^2
\left\|\nabla_2f(x_k,y_k)-\nabla_2f(x_k,y^*(x_k))\right\|^2.
\end{align*}
Using Assumption \ref{assump LL}(a) and the step size condition \(\gamma_k\le 1/L_1^g\), we have
\[\bigl\|I-\gamma_k\nabla_{22}^2 g(x_k,y_k)\bigr\|^2\le (1-\gamma_k\mu)^2.\]
Moreover, by Assumptions~\ref{assump UL}(b) and~\ref{assump LL}(b), together with \(\|z^*(x_k)\|\le R\), we obtain
\begin{align*}
&\left\|z_k-\gamma_k D_k^z-z^*\left(x_k\right)\right\|^2\notag\\
\leq &
\left(1+\frac{\gamma_k\mu}{2}\right)(1-\gamma_k\mu)^2\left\|z_k-z^*(x_k)\right\|^2
{+\left(2+\frac{4}{\gamma_k\mu}\right)\gamma_k^2c_1\|y_k-y^*(x_k)\|^2},
\end{align*}
    where $c_1=\left(L_2^gR\right)^2+(L^f)^2$.

	Rearranging the above inequalities and taking the total expectation yields
	\begin{align*}
		\mathbb{E}\left[\left\|z_{k+1}-z^*\left(x_{k+1}\right)\right\|^2\right]
		\leq&\left(1+\frac{\mu\gamma_k}{4}\right)\left(1+\frac{\gamma_k\mu}{2}\right)^2(1-\gamma_k\mu)^2\mathbb{E}\left[\left\|z_k-z^*\left(x_k\right)\right\|^2\right]\\
		&+
		\left(1+\frac{\mu\gamma_k}{4}\right)\left(1+\frac{\gamma_k\mu}{2}\right)\left(2+\frac{4}{\gamma_k\mu}\right)\gamma_k^2c_1\mathbb{E}\left[\left \|y_k-y^*(x_k)\right\|^2\right]\\
		&+\left(1+\frac{\mu\gamma_k}{2}\right)\left(1+\frac{4}{\mu\gamma_k}\right)\gamma_k^2\mathbb{E}\left[\| v_k^z-D_k^z\|^2\right]\\
		&+ \left(1+\frac{2}{\mu\gamma_k }\right)L_{z^*}^2\alpha_k^2\mathbb{E}\left[\left\|v_k^x\right\|^2\right].
	\end{align*}
	Using  the step size condition $\gamma_k\leq\frac{1}{L_1^g+\mu},$ we have
	\begin{align*}
		\mathbb{E}\left[\left\|z_{k+1}-z^*\left(x_{k+1}\right)\right\|^2\right]
		\leq&
		\left(1-\frac{\gamma_k\mu}{4}\right)\mathbb{E}\left[\left\|z_k-z^*\left(x_k\right)\right\|^2\right]
		+
		\frac{18c_1\gamma_k}{\mu}\mathbb{E}\left[\left \|y_k-y^*(x_k)\right\|^2\right]
        \\
		&+\frac{12\gamma_k}{\mu}\mathbb{E}\left[\| v_k^z-D_k^z\|^2\right]
        + \frac{4L_{z^*}^2\alpha_k^2}{\mu\gamma_k }\mathbb{E}\left[\left\|v_k^x\right\|^2\right].
	\end{align*}
\end{proof}

\begin{proof}
\textbf{(Proofs of the Remaining Lemmas and Corollary):}

They have already been proved in our conference version. Specifically, Lemma \ref{H} can be found in Lemma D.7 of the conference version, Corollary \ref{H_biased} in Corollary F.1, Lemma \ref{y-y*_biased} in Lemma F.3, Lemmas \ref{y-y*_unbiased} and \ref{z-z*_unbiased} in Lemma E.2.
\end{proof}

\section{Variance Results of Stochastic Estimators in the Bilevel Setting}\label{appendix_variance}

{For the stochastic estimator \( \mathcal{B} \),} we have
\begin{equation}\label{lemma_vr_b}
\begin{aligned}
\mathbb{E}\left[\left\|D^y_{k+1}- v_{k+1}^y\right\|^2\right]
    \leq &
\theta_k^{\mathcal{B}} \mathbb{E}\left[\left\|D^y_k - v_k^y\right\|^2\right]
+ \eta_k^{\mathcal{B}}c_2\left(\alpha_k^2 \mathbb{E}\left[\left\|v_k^x\right\|^2\right] + \beta_k^2 \mathbb{E}\left[\left\|v_k^y\right\|^2\right] \right)
\\&+ \tau_k^{\mathcal{B}}\delta_k^{\mathcal{B}}
+\Delta_k^{\mathcal{B}},
\end{aligned}
\end{equation}
and
\begin{equation}\label{lemma_memory_b}
\begin{aligned}
\delta_{k+1}^{\mathcal{B}}\leq &
    \lambda_k^{\mathcal{B}} \delta_k^{\mathcal{B}}
    + \hat{\eta}_k^{\mathcal{B},x}\alpha_k^2
    \mathbb{E}[\|v_k^x\|^2]
    + \hat{\eta}_k^{\mathcal{B},y}\beta_k^2
    \mathbb{E}[\|v_k^y\|^2]
 +\hat{\eta}_k'^{\mathcal{B},x}\alpha_k^2
   \mathbb{E}[\|D_k^x\|^2]
   +\hat{\eta}_k'^{\mathcal{B},y}\beta_k^2\mathbb{E}[\|D_k^y\|^2].
\end{aligned}
\end{equation}
{For the stochastic estimator \( \mathcal{C} \),} we have
\begin{equation}\label{lemma_vr_c}
\begin{aligned}
\mathbb{E}\left[\left\|D^z_{k+1} - v_{k+1}^z\right\|^2\right]
    \leq&
\theta_k^{\mathcal{C}} \mathbb{E}\left[\left\|D^z_k - v_k^z\right\|^2\right]
+
    \eta_k^{\mathcal{C}}c_1
    \alpha_k^2\mathbb{E}[\|v_k^x\|^2]
    +\eta_k^{\mathcal{C}}c_1 \beta_k^2\mathbb{E}[\|v_k^y\|^2]
    \\&
    + \eta_k^{\mathcal{C}} c_2\gamma_k^2\mathbb{E}[\|v_k^z\|^2]
  +\tau_k^{\mathcal{C}}\delta_k^{\mathcal{C}}
     + \Delta_k^{\mathcal{C}},
\end{aligned}
\end{equation}
and
\begin{equation}\label{lemma_memory_c}
\begin{aligned}
    \delta_{k+1}^{\mathcal{C}}\leq &
    \lambda_k^{\mathcal{C}} \delta_k^{\mathcal{C}}
    + \hat{\eta}_k^{\mathcal{C},x}\alpha_k^2
    \mathbb{E}[\|v_k^x\|^2]
    + \hat{\eta}_k^{\mathcal{C},y}\beta_k^2
    \mathbb{E}[\|v_k^y\|^2]
    + \hat{\eta}_k^{\mathcal{C},z}\gamma_k^2
    \mathbb{E}[\|v_k^z\|^2]
    \\
    &+\hat{\eta}_k'^{\mathcal{C},x}\alpha_k^2
   \mathbb{E}[\|D_k^x\|^2]
   +\hat{\eta}_k'^{\mathcal{C},y}\beta_k^2\mathbb{E}[\|D_k^y\|^2]
   +\hat{\eta}_k'^{\mathcal{C},z}\gamma_k^2\mathbb{E}[\|D_k^z\|^2].
\end{aligned}
\end{equation}

\begin{proof}\textbf{(Proof of Inequalities
\eqref{lemma_vr_a}, \eqref{lemma_memory_a}, and \eqref{lemma_vr_b}-\eqref{lemma_memory_c})}

We take $\mathcal{A}$ as an illustrative example.
Since $\mathcal{A}$ satisfies the SE-VRC condition,
we have
\begin{equation}\label{eq_appendix_blo_vr}
\begin{aligned}
    &\mathbb{E}\left[\left\|D^x_{k+1}- \hat{v}_{k+1}^x\right\|^2\right]
   \\ \leq &
    \theta_k^{\mathcal{A}} \mathbb{E}\left[\left\|D^x_{k} - \hat{v}_k^x\right\|^2\right]
    + \eta_k'^{\mathcal{A}} \mathbb{E}\left[\left\|\nabla_1 F_i(x_{k+1},y_{k+1}) -
    \nabla_1 F_i(x_k,y_k)\right\|^2\right]
     \\&
    + \eta_k'^{\mathcal{A}} \mathbb{E}\left[\left\|\nabla_{12}^2 G_j(x_{k+1},y_{k+1})z_{k+1}
    - \nabla_{12}^2 G_j(x_k,y_k)z_k\right\|^2\right]
    \\
    &+\tau_k'^{\mathcal{A}}\frac{1}{n}\sum_{i=1}^n\mathbb{E}[\|\nabla_1 F_i(x_k,y_k)-\nabla_1 F_i(w_{k,i}^x,w_{k,i}^y)\|^2]
     \\
    &+\tau_k'^{\mathcal{A}}\frac{1}{m}\sum_{j=1}^m\mathbb{E}[\|\nabla_{12}^2 G_j(x_k,y_k)z_k-\nabla_{12}^2 G_j(\tilde{w}_{k,j}^x,\tilde{w}_{k,j}^y)\tilde{w}_{k,j}^z\|^2]
    + \Delta_k^{\mathcal{A}}
    ,
\end{aligned}
\end{equation}
where, conditional on $I_{k+1}$ and $J_{k+1}$, the indices $i$ and $j$ are sampled independently and uniformly
from $I_{k+1}$ and $J_{k+1}$, respectively.
Here $w_{k,i}=(w_{k,i}^x,w_{k,i}^y)$ for $i\in[n]$ and
$\tilde w_{k,j}=(\tilde w_{k,j}^x,\tilde w_{k,j}^y,\tilde w_{k,j}^z)$ for $j\in[m]$
are the memory variables maintained by $\mathcal{A}$ (when memory is required),
corresponding to the oracle calls for $f$ and $g$, respectively.
We define
\begin{align*}
\delta_k^{f,x}=\frac{1}{n}\sum\limits_{i=1}^n\mathbb{E}[\|x_k-w_{k,i}^x\|^2],\quad
\delta_k^{g,x}=\frac{1}{m}\sum\limits_{j=1}^m\mathbb{E}[\|x_k-\tilde{w}_{k,j}^x\|^2]\end{align*}
and analogously define $\delta_k^{f,y}$, $\delta_k^{g,y}$, $\delta_k^{g,z}$.

For the second and third terms on the right-hand side of \eqref{eq_appendix_blo_vr}, we can bound them as follows:

\begin{align}
   &\eta_k'^{\mathcal{A}}\mathbb{E}\left[\left\| {\nabla_1 F_i(x_{k+1}, y_{k+1}) - \nabla_1 F_i(x_{k}, y_{k})}\right\|^2\right]
   \leq
   \eta_k'^{\mathcal{A}}(L^f)^2\left(\alpha_k^2\mathbb{E}[\|v_k^x\|^2]
   + \beta_k^2\mathbb{E}[\|v_k^y\|^2]\right),\label{eq_fi_xk}
   \end{align}
   \begin{equation}\label{eq_gj_xk}
   \begin{aligned}
   &\eta_k'^{\mathcal{A}} \mathbb{E}\left[\left\|\nabla_{12}^2 G_j(x_{k+1},y_{k+1})z_{k+1}
    - \nabla_{12}^2 G_j(x_k,y_k)z_k\right\|^2\right] \\
    \leq
    &2\eta_k'^{\mathcal{A}}\mathbb{E}\left[\left\| {\nabla_{12}^2 G_j(x_{k}, y_{k})(z_{k+1} - z_k)}\right\|^2\right]\\
    &+
    2\eta_k'^{\mathcal{A}} \mathbb{E}[\|{(\nabla_{12}^2 G_j(x_{k+1}, y_{k+1}) - \nabla_{12}^2 G_j(x_{k}, y_{k}))}\|^2 \|z_{k+1}\|^2] \\
     \leq &{2R^2 (L_2^g)^2 \eta_k'^{\mathcal{A}}\alpha_k^2}(\mathbb{E}[\|v_k^x\|^2]
    + \beta_k^2 \mathbb{E}[\|v_k^y\|^2])
    + {2 (L_1^g)^2 \eta_k'^{\mathcal{A}}\gamma_k^2}\mathbb{E}[\|v_k^z\|^2].
\end{aligned}
\end{equation}

For the fourth term on the right-hand side of \eqref{eq_appendix_blo_vr},
by Assumption~\ref{assump UL}, we have
\begin{align}\label{eq_fi_memory}
    &\tau_k'^{\mathcal{A}}\frac{1}{n}\sum_{i=1}^n\mathbb{E}[\|\nabla F_i(x_k,y_k)-\nabla F_i(w_{k,i}^x,w_{k,i}^y)\|^2]
    \leq
    \tau_k'^{\mathcal{A}}(L^f)^2\delta_k^{f,x}+\tau_k'^{\mathcal{A}}(L^f)^2\delta_k^{f,y}.
\end{align}

For the fifth term on the right-hand side of \eqref{eq_appendix_blo_vr},
invoking Assumption~\ref{assump LL} and Lemma~\ref{Ly*},
and adding and subtracting $\nabla_{12}^2 G_j(\tilde w_{k,j}^x,\tilde w_{k,j}^y) z_k$ yield
\begin{equation}\label{eq_gj_meomry}
\begin{aligned}
    &\tau_k'^{\mathcal{A}}\frac{1}{m}\sum_{j=1}^m\mathbb{E}[\|\nabla_{12}^2 G_j(x_k,y_k)z_k-\nabla_{12}^2 G_j(\tilde{w}_{k,j}^x,\tilde{w}_{k,j}^y)\tilde{w}_{k,j}^z\|^2]
    \\\leq&
    2\tau_k'^{\mathcal{A}}(L_2^g)^2R^2(\delta_k^{g,x}+\delta_k^{g,y})
    +2\tau_k'^{\mathcal{A}}(L_1^g)^2\delta_k^{g,z}.
\end{aligned}\end{equation}
Substituting the above inequalities into \eqref{eq_appendix_blo_vr} yields
\begin{align*}
    \mathbb{E}\left[\left\|D^x_{k+1}- \hat{v}_{k+1}^x\right\|^2\right]
    \leq &
    \theta_k^{\mathcal{A}} \mathbb{E}\left[\left\|D^x_{k} - \hat{v}_k^x\right\|^2\right]
    +
    {2 c_1 \eta_k'^{\mathcal{A}}\alpha_k^2}\mathbb{E}[\|v_k^x\|^2]
    + {2 c_1 \eta_k'^{\mathcal{A}}\beta_k^2}\mathbb{E}[\|v_k^y\|^2]
    \\&
    + {2 c_2 \eta_k'^{\mathcal{A}}\gamma_k^2}\mathbb{E}[\|v_k^z\|^2]
    +\tau_k'^{\mathcal{A}}(L^f)^2(\delta_k^{f,x}+\delta_k^{f,y})
     \\&+2\tau_k'^{\mathcal{A}}(L_2^g)^2R^2(\delta_k^{g,x}+\delta_k^{g,y})
    +2\tau_k'^{\mathcal{A}}(L_1^g)^2\delta_k^{g,z}
     + \Delta_k^{\mathcal{A}}
     \\
     \leq &
    \theta_k^{\mathcal{A}} \mathbb{E}[\|D^x_{k} - \hat{v}_k^x\|^2] +\tau_k^{\mathcal{A}}\delta_k^{\mathcal{A}}
     + \Delta_k^{\mathcal{A}}
    \\&
    +
    \eta_k^{\mathcal{A}}
    \left(c_1\alpha_k^2\mathbb{E}[\|v_k^x\|^2]
    + c_1\beta_k^2\mathbb{E}[\|v_k^y\|^2]
    +  c_2\gamma_k^2\mathbb{E}[\|v_k^z\|^2] \right)
    ,
\end{align*}
where we set $\eta_k^{\mathcal{A}}=2\eta_k'^{\mathcal{A}}$,
$\tau_k^{\mathcal{A}}=\tau_k'^{\mathcal{A}}L''$, $L''=\max\{(L^f)^2,\,2(L_2^gR)^2,\,2(L_1^g)^2\}$, and $\delta_k^{\mathcal{A}}:=\delta_k^{f,x}+\delta_k^{g,x}+\delta_k^{f,y}+\delta_k^{g,y}+\delta_k^{g,z}$.

Based on the memory-update rule of $\mathcal{A}$, which satisfies the SE-VRC condition, the mean-squared staleness admits the recursions
\begin{subequations}\label{eq:delta_recursions}
\begin{align}
    &\delta_{k+1}^{f,x}\leq
    \lambda_k^{\mathcal{A},n} \delta_k^{f,x}
    + \hat{\eta}_k^{\mathcal{A},n,x}\alpha_k^2
    \mathbb{E}[\|v_k^x\|^2]
   + \hat{\eta}_k'^{\mathcal{A},n,x}\alpha_k^2
    \mathbb{E}[\|D_k^x\|^2],\\&
    \delta_{k+1}^{g,x}\leq
    \lambda_k^{\mathcal{A},m} \delta_k^{g,x}
    + \hat{\eta}_k^{\mathcal{A},m,x}
    \alpha_k^2
    \mathbb{E}[\|v_k^x\|^2]
   + \hat{\eta}_k'^{\mathcal{A},m,x}\alpha_k^2
    \mathbb{E}[\|D_k^x\|^2],\\&
    \delta_{k+1}^{f,y}\leq
    \lambda_k^{\mathcal{A},n} \delta_k^{f,y}
    + \hat{\eta}_k^{\mathcal{A},n,y}\beta_k^2
    \mathbb{E}[\|v_k^y\|^2]
   + \hat{\eta}_k'^{\mathcal{A},n,y}\beta_k^2
    \mathbb{E}[\|D_k^y\|^2],\\&
    \delta_{k+1}^{g,y}\leq
    \lambda_k^{\mathcal{A},m} \delta_k^{g,y}
    + \hat{\eta}_k^{\mathcal{A},m,y}\beta_k^2
    \mathbb{E}[\|v_k^y\|^2]
   + \hat{\eta}_k'^{\mathcal{A},m,y}\beta_k^2
    \mathbb{E}[\|D_k^y\|^2],\\&
    \delta_{k+1}^{g,z}\leq
    \lambda_k^{\mathcal{A},m} \delta_k^{g,z}
    + \hat{\eta}_k^{\mathcal{A},m,z}\gamma_k^2
    \mathbb{E}[\|v_k^z\|^2]
   + \hat{\eta}_k'^{\mathcal{A},m,z}\gamma_k^2
    \mathbb{E}[\|D_k^z\|^2].
\end{align}
\end{subequations}
Note that the staleness terms are induced by the memory tables maintained by $\mathcal{A}$.
The coefficients in the recursion may depend on the data set sizes $n$ and $m$ through the sampling and refresh mechanisms.
In addition, $\hat{\eta}_k^{\mathcal{A},\cdot,\cdot}$ and $\hat{\eta}_k^{\prime\mathcal{A},\cdot,\cdot}$ may vary with the properties of the update directions $(v_k^x,v_k^y,v_k^z)$ (e.g., biased vs.\ unbiased), generated by $\mathcal{A}$, $\mathcal{B}$, and $\mathcal{C}$, respectively.
We therefore distinguish these coefficients using the superscripts in \eqref{eq:delta_recursions}.\\
Define
\(
\hat{\eta}_k^{\mathcal{A},x}
:=\hat{\eta}_k^{\mathcal{A},n,x}+\hat{\eta}_k^{\mathcal{A},m,x},\quad
\hat{\eta}_k^{\prime\mathcal{A},x}
:=\hat{\eta}_k^{\prime\mathcal{A},n,x}+\hat{\eta}_k^{\prime\mathcal{A},m,x},
\)
with $\hat{\eta}_k^{\mathcal{A},y}$, $\hat{\eta}_k^{\mathcal{A},z}$, $\hat{\eta}_k^{\prime\mathcal{A},y}$, and $\hat{\eta}_k^{\prime\mathcal{A},z}$ defined analogously.
Summing \eqref{eq:delta_recursions} yields
\begin{align*}
    \delta_{k+1}^{\mathcal{A}}\leq &
    \lambda_k^{\mathcal{A}} \delta_k^{\mathcal{A}}
    + \hat{\eta}_k^{\mathcal{A},x}\alpha_k^2
    \mathbb{E}[\|v_k^x\|^2]
    + \hat{\eta}_k^{\mathcal{A},y}\beta_k^2
    \mathbb{E}[\|v_k^y\|^2]
    + \hat{\eta}_k^{\mathcal{A},z}\gamma_k^2
    \mathbb{E}[\|v_k^z\|^2]
    \\
    &+\hat{\eta}_k'^{\mathcal{A},x}\alpha_k^2
   \mathbb{E}[\|D_k^x\|^2]
   +\hat{\eta}_k'^{\mathcal{A},y}\beta_k^2\mathbb{E}[\|D_k^y\|^2]
   +\hat{\eta}_k'^{\mathcal{A},z}\gamma_k^2
   \mathbb{E}[\|D_k^z\|^2].
\end{align*}
The proofs of \eqref{lemma_vr_b}--\eqref{lemma_memory_c} follow analogously and are omitted.
\end{proof}

\clearpage
\section{Verification of SFFBA Satisfying the SE-VRC condition.}\label{section_proof_sffba}
\begin{lemma}\label{lemma_SE-VRC_sffba}
    Under the conditions stated in Assumptions \ref{assump UL} and \ref{assump LL}, SFFBA satisfies the SE-VRC condition and we have
    {
    \setlength{\abovedisplayskip}{3pt} 
    \setlength{\belowdisplayskip}{3pt}
     \begin{align*}
       & \theta_k^{\mathcal{A}}=\theta_k^{\mathcal{B}}=\theta_k^{\mathcal{C}}=(1-\bar{\rho}_{k+1})^2,\quad
        \eta_k^{\mathcal{A}}=\eta_k^{\mathcal{B}}=\eta_k^{\mathcal{C}}=\frac{4}{b},\quad
    \tau_k^{\mathcal{A}}=\tau_k^{\mathcal{B}}=\tau_k^{\mathcal{C}}=\frac{2\bar{\rho}_{k+1}^2L''}{b},\\
       &
    \delta_k^{\mathcal{A}}=\delta_k^{\mathcal{C}}=
    \delta_k^{\mathcal{B}}=\delta_k,\quad
    \lambda_k^{\mathcal{A}}=\lambda_k^{\mathcal{B}}=\lambda_k^{\mathcal{C}}=1-\frac{b}{2N},
\quad
\Delta_k^{\mathcal{A}}=\Delta_k^{\mathcal{B}}=\Delta_k^{\mathcal{C}}=0,
        \\
       &\hat{\eta}_k^{\mathcal{A},\ell}=\hat{\eta}_k^{\mathcal{B},\ell}=\hat{\eta}_k^{\mathcal{C},\ell}=\frac{3N}{b},
       \quad
       \hat{\eta}_k^{\prime\mathcal{A},\ell}=\hat{\eta}_k^{\prime\mathcal{B},\ell}=\hat{\eta}_k^{\prime\mathcal{C},\ell}=0,\quad  \forall\,\ell\in\{x,y,z\}, 
    \end{align*}}
    where $\delta_k=\delta_k^{f,x}+\delta_k^{g,x}+\delta_k^{f,y}+\delta_k^{g,y}+\delta_k^{g,z}$, $L''=\max\{(L^f)^2,\,2(L_2^gR)^2,\,2(L_1^g)^2\}$.
\end{lemma}
\begin{proof}
According to Lemma 2 in \cite{li2021zerosarah}, we can directly obtain that the SE-VRC condition holds and the selection of the parameters.
For the sake of completeness in the proof, we provide a detailed proof in BLO setting for $\mathbb{E}[\|\hat{v}_{k}^x-D^x_k\|^2]$ and $E_{k,f}^x$ as an example.

First, for the descent property of $\mathbb{E}[\|\hat{v}_{k}^x-D^x_k\|^2]$, from the update rule of \( \hat{v}_k^x\), we have
{
    \setlength{\abovedisplayskip}{3pt} 
    \setlength{\belowdisplayskip}{3pt}
\begin{align*}
       &  \mathbb{E}\left[\left\|v_{k+1}^x-D^x_{k+1}\right\|^2\right]
     =
     \mathbb{E}\Big[\Big\|
        (1-\bar{\rho}_{k+1})\left(v_{k}^x-D^x_k\right)
        +D^x_{k+1;I,J}-D^x_{k;I,J}
        +D^x_k-D^x_{k+1}
        \\
        &\quad\quad\quad\quad\quad\quad\quad\quad\quad\quad\quad
        +\bar{\rho}_{k+1}\left(
        D^x_{k;I,J} - D^x_{k}
        -\hat{D}^x_{k;I,J}(w,\tilde{w})+\hat{D}^x_{k;[n],[m]}(w,\tilde{w})
    \right)
     \Big\|^2\Big]
\\
     \leq&
     (1-\bar{\rho}_{k+1})^2\mathbb{E}[\|v_{k}^x-D^x_k\|^2]
     +2\mathbb{E}\left[\|D^x_{k+1;I,J}-D^x_{k;I,J}
        +D^x_k-D^x_{k+1}\|^2\right]\\
    & + 2\bar{\rho}_{k+1}^2\mathbb{E}\left[\|D^x_{k;I,J} - D^x_{k}
        -\hat{D}^x_{k;I,J}(w,\tilde{w})+\hat{D}^x_{k;[n],[m]}(w,\tilde{w})\|^2\right]\\
    \leq&
    (1-\bar{\rho}_{k+1})^2\mathbb{E}[\|v_{k}^x-D^x_k\|^2]
     +\frac{2}{b^2}\sum_{i\in I}\mathbb{E}\left[\|\nabla_1 F_i(x_{k+1},y_{k+1}) -
    \nabla_1 F_i(x_k,y_k)\|^2\right]\\
    &+\frac{2}{b^2}\sum_{j\in J}\mathbb{E}\left[\|\nabla_{12}^2 G_j(x_{k+1},y_{k+1})z_{k+1}
    - \nabla_{12}^2 G_j(x_k,y_k)z_k\|^2\right]\\
    & + \frac{2\bar{\rho}_{k+1}^2}{b}\frac{1}{n}\sum_{i=1}^n\mathbb{E}[\|\nabla_1 F_i(x_k,y_k)-\nabla_1 F_i(w_{k,i}^x,w_{k,i}^y)\|^2]
    \\
    & + \frac{2\bar{\rho}_{k+1}^2}{b}\frac{1}{m}\sum_{j=1}^m\mathbb{E}[\|\nabla_{12}^2 G_j(x_k,y_k)z_k-\nabla_{12}^2 G_j(\tilde{w}_{k,j}^x,\tilde{w}_{k,j}^y)\tilde{w}_{k,j}^z\|^2],
    \end{align*}}
where \( \hat{v}_k^x=v_k^x \), the first inequality uses that $D^x_{k+1;I,J}-D^x_{k;I,J}$ is an unbiased estimator of
$D^x_{k+1}-D^x_k$ and that $D^x_{k;I,J}-\hat{D}^x_{k;I,J}(w,\tilde{w})$ is an unbiased estimator of
$D^x_k-\hat{D}^x_{k;[n],[m]}(w,\tilde{w})$, so the cross
terms vanish; we then apply $\|a+b\|^2\le 2\|a\|^2+2\|b\|^2$.
The second inequality follows from  using the independence
between the $i$- and $j$-samples, and applying
$\mathbb{E}\,[\|X-\mathbb{E}X\|^2]\le \mathbb{E}\,[\|X\|^2]$.\\
Following the derivations in \eqref{eq_fi_xk}--\eqref{eq_gj_meomry}, we further obtain
{
    \setlength{\abovedisplayskip}{1pt} 
    \setlength{\belowdisplayskip}{3pt}
     \begin{align*}
         \mathbb{E}\left[\left\|v_{k+1}^x-D^x_{k+1}\right\|^2\right]
     &\leq
     (1-\bar{\rho}_{k+1})^2\mathbb{E}\left[\left\|v_{k}^x-D^x_{k}\right\|^2\right]\nonumber
+\frac{2L''\bar{\rho}_{k+1}^2}{b}\delta_k
\nonumber\\
&\quad
+\frac{4}{b}\left(c_1 \alpha_k^2 \mathbb{E}\left[\left\|v_k^x\right\|^2\right] +  c_1 \beta_k^2 \mathbb{E}\left[\left\|v_k^y\right\|^2\right] +  c_2 \gamma_k^2 \mathbb{E}\left[\left\|v_k^z\right\|^2\right]
    \right),
    \end{align*}}
where $\delta_k=\delta_k^{f,x}+\delta_k^{g,x}+\delta_k^{f,y}+\delta_k^{g,y}+\delta_k^{g,z}$.

Next, for the descent property of $\delta_k^{f,x}$,
we have
\begin{align*}
&E_k[\|x_{k+1}-w_{k+1,i}^x\|^2]
=\left(1-\frac{b}{n}\right)E_k[\|x_{k+1}-w_{k,i}^x\|^2]\\
&\leq
\left(1-\frac{b}{n}\right)\left(1+\frac{b}{2n}\right)E_k[\|x_{k}-w_{k,i}^x\|^2]
+\left(1-\frac{b}{n}\right)(1+\frac{2n}{b})E_k[\|x_{k+1}-x_{k}\|^2]\\
&\leq
\left(1-\frac{b}{2n}\right)E_k[\|x_{k}-w_{k,i}^x\|^2]
+\frac{3n\alpha_k^2}{b}E_k[\|v_k^x\|^2].
\end{align*}
Taking the full expection yields
{
    \setlength{\abovedisplayskip}{1pt} 
    \setlength{\belowdisplayskip}{1pt}
\begin{align*}
\delta_{k+1}^{f,x}
     &=\frac{1}{n}\sum_{i=1}^n \mathbb{E}[\|x_{k+1}-w_{k+1,i}^x\|^2]
     \leq\left(1-\frac{b}{2n}\right)\delta_k^{f,x}
+\frac{3n\alpha_k^2}{b}\mathbb{E}[\|v_k^x\|^2].
\end{align*}}
Similarly, the same reasoning applies to the other terms in $\delta_k$. Then we have
\begin{align*}
\delta_{k+1}
\leq {}& \left(1-\frac{b}{2N}\right)\delta_k
+\frac{3N\alpha_k^2}{b}\mathbb{E}[\|v_k^x\|^2] \\
&+\frac{3N\beta_k^2}{b}\mathbb{E}[\|v_k^y\|^2]
+\frac{3N\gamma_k^2}{b}\mathbb{E}[\|v_k^z\|^2].
\end{align*}
\end{proof}

\section{Proof of Theorem \ref{theorem_biased}}

\begin{proof}
Since the stochastic estimator $\mathcal{A}$ is biased, it follows that $v_k^x = \hat{v}_k^x$, and we set $A''_k = 0$. Under the conditions of Assumptions \ref{assump UL}, \ref{assump LL}, and the SE-VRC condition, we have that Corollary \ref{H_biased}, Lemmas \ref{y-y*_biased} and \ref{z-z*_biased}, and the equations \eqref{lemma_vr_a}-\eqref{lemma_memory_a} and \eqref{lemma_vr_b}-\eqref{lemma_memory_c} hold. Based on these results, and by combining Lemma \ref{dL} with the following inequalities
\begin{align*}
    \mathbb{E}\left[\left\|v_k^y\right\|^2\right]
    &\leq
    2\mathbb{E}\left[\left\|v_k^y-D^y_k\right\|^2\right]
    +2\mathbb{E}\left[\left\|D^y_k\right\|^2\right],\\
    \mathbb{E}\left[\left\|v_k^z\right\|^2\right]
    &\leq
    2\mathbb{E}\left[\left\|v_k^z-D^z_k\right\|^2\right]
    +2\mathbb{E}\left[\left\|D^z_k\right\|^2\right].
\end{align*}
we obtain
{\setlength{\abovedisplayskip}{3pt}
\setlength{\belowdisplayskip}{3pt}
\begin{align*}
&L_{k+1}-L_k
\leq
-\frac{\alpha_k}{2}
        \mathbb{E}\left[\left\|\nabla H\left(x_k\right)\right\|^2\right]
+\text{Part}_{1}
        \mathbb{E}\left[\left\|v_k^x\right\|^2\right]
+\text{Part}_{2}
        \mathbb{E}\left[\left\|y_k-y^*_k\right\|^2\right]
        \\
&\quad
+\text{Part}_{3}
        \mathbb{E}\left[\left\|z_k-z^*_k\right\|^2\right]
+\text{Part}_{4}
        \mathbb{E}\left[\left\|D^x_k-v_k^x\right\|^2\right]
+\text{Part}_{5}
       \mathbb{E}\left[\left\|v_k^y-D^y_k\right\|^2\right]
              \\
&\quad
+\text{Part}_{6}
       \mathbb{E}[\| D^z_k - v_k^z\|^2]
+\text{Part}_{7}
        \delta_k^{\mathcal{A}}
+\text{Part}_{8}
        \delta_k^{\mathcal{B}}
+\text{Part}_{9}
        \delta_k^{\mathcal{C}}\\
&\quad
+ A_{k+1}\Delta_k^{\mathcal{A}}
+ B_{k+1}\Delta_k^{\mathcal{B}}
+ C_{k+1}\Delta_k^{\mathcal{C}},
\end{align*}}
where
\begin{align*}
\text{Part}_{1}=
    & -\frac{\alpha_k}{4}
    + c_5C_y\frac{\alpha_k^2}{\beta_k}
    + c_7 C_z\frac{\alpha_k^2}{\gamma_k}
    + 2(A^{\prime}_{k+1}\hat{\eta}^{\prime \mathcal{A},x}
    +B^{\prime}_{k+1}\hat{\eta}^{\prime \mathcal{B},x}
     + C^{\prime}_{k+1}\hat{\eta}^{\prime \mathcal{C},x})\alpha_k^2,
    \\&
    + \left(A_{k+1}\eta_k^{\mathcal{A}}c_1+A'_{k+1}\hat{\eta}_k^{\mathcal{A},x}
    +
    B_{k+1}c_2\eta_k^{\mathcal{B}} +B'_{k+1}\hat{\eta}_k^{\mathcal{B},x}
    +
    C_{k+1}c_1\eta_k^{\mathcal{C}}+C'_{k+1}\hat{\eta}_k^{\mathcal{C},x}
    \right) \alpha_k^2,
\\[3pt]
\text{Part}_{2}=
    &
    3c_1{\alpha_k} - c_3 C_y \beta_k
    +\frac{18C_zc_1}{\mu}\gamma_k
    +
    c_2\left( 2B_{k+1}c_2\eta_k^{\mathcal{B}}+B'_{k+1}(2\hat{\eta}_k^{\mathcal{B},y}+\hat{\eta}_k^{\prime\mathcal{B},y}) \right) \beta_k^2
    \\
    &
    +
    c_2\left(2A_{k+1}c_1\eta_k^{\mathcal{A}}+A'_{k+1}(2\hat{\eta}_k^{\mathcal{A},y}+\hat{\eta}_k^{\prime\mathcal{A},y})
    + 2C_{k+1}c_1\eta_k^{\mathcal{C}}+C'_{k+1}
    (2\hat{\eta}_k^{\mathcal{C},y}+\hat{\eta}_k^{\prime\mathcal{C},y}) \right) \beta_k^2
    \\&
    +L_z^2\left(2A_{k+1}c_2\eta_k^{\mathcal{A}}
    +A'_{k+1}(2\hat{\eta}_k^{\mathcal{A},z}+\hat{\eta}_k^{\prime\mathcal{A},z} )
    +2C_{k+1}c_2\eta_k^{\mathcal{C}}+C'_{k+1}(2\hat{\eta}_k^{\mathcal{C},z}+\hat{\eta}_k^{\prime\mathcal{C},z})
    \right) \gamma_k^2
    ,\end{align*}\begin{align*}
\text{Part}_{3}=
    &
    3c_2{\alpha_k}
    -\frac{C_z\mu}{4}\gamma_k
    +L_z^2\left(2A_{k+1}c_2\eta_k^{\mathcal{A}} +2A'_{k+1}\hat{\eta}_k^{\mathcal{A},z} +A'_{k+1}\hat{\eta}_k^{\prime\mathcal{A},z} \right) \gamma_k^2
    \\&+L_z^2\left(2C_{k+1}c_2\eta_k^{\mathcal{C}}+C'_{k+1}(2\hat{\eta}_k^{\mathcal{C},z} +\hat{\eta}_k^{\prime\mathcal{C},z} )\right) \gamma_k^2,
    \\[3pt]
\text{Part}_{4}=
&\alpha_k
+A_{k+1}\theta_k^{\mathcal{A}}-A_k + 2A^{\prime}_{k+1}\hat{\eta}^{\prime \mathcal{A},x}\alpha_k^2+ 2B^{\prime}_{k+1}\hat{\eta}^{\prime \mathcal{B},x}\alpha_k^2
     + 2C^{\prime}_{k+1}\hat{\eta}^{\prime \mathcal{C},x}\alpha_k^2,
\\[5pt]
\text{Part}_{5}=
&c_4\beta_k C_y
+B_{k+1}\theta_k^{\mathcal{B}}-B_k
+ 2\left(A_{k+1}c_1\eta_k^{\mathcal{A}}+A'_{k+1}\hat{\eta}_k^{\mathcal{A},y} \right)  \beta_k^2
\\ &
+2 \left(B_{k+1}c_2\eta_k^{\mathcal{B}}+B'_{k+1}\hat{\eta}_k^{\mathcal{B},y} \right)  \beta_k^2
+2\left(C_{k+1}c_1\eta_k^{\mathcal{C}}+C'_{k+1}\hat{\eta}_k^{\mathcal{C},y} \right)  \beta_k^2,
\\
\text{Part}_{6}=
&c_{10}\gamma_k C_z
+C_{k+1}\theta_k^{\mathcal{C}}-C_k
+2\left(A_{k+1}c_2\eta_k^{\mathcal{A}}+A'_{k+1}\hat{\eta}_k^{\mathcal{A},z} \right) \gamma_k^2
\\ &
+2\left(C_{k+1}c_2\eta_k^{\mathcal{C}}+C'_{k+1}\hat{\eta}_k^{\mathcal{C},z} \right) \gamma_k^2,
\\[5pt]
\text{Part}_{7}=
&A_{k+1}\tau_k^{\mathcal{A}}
+A'_{k+1}\lambda_k^{\mathcal{A}}-A'_{k},
\\[5pt]
\text{Part}_{8}=
&B_{k+1}\tau_k^{\mathcal{B}}
+B'_{k+1}\lambda_k^{\mathcal{B}}-B'_{k},
\\[5pt]
\text{Part}_{9}=&
C_{k+1}\tau_k^{\mathcal{C}}
+C'_{k+1}\lambda_k^{\mathcal{C}}-C'_{k}.
\end{align*}

It remains to verify that, under the conditions stated in Theorem~\ref{theorem_biased},
$
\text{Part}_i \le 0,$ $i=1,2,\ldots,9.
$
Granting this claim for the moment, we obtain
\begin{align*}
L_{k+1}-L_k
\leq
-\frac{\alpha_k}{2}
        \mathbb{E}\left[\left\|\nabla H\left(x_k\right)\right\|^2\right]
+ A_{k+1}\Delta_k^{\mathcal{A}}
+ B_{k+1}\Delta_k^{\mathcal{B}}
+ C_{k+1}\Delta_k^{\mathcal{C}}.
\end{align*}
By rearranging and summing up, we obtain
\begin{align*}
\sum_{k=0}^{K-1}
{\alpha_k}
        \mathbb{E}\left[\left\|\nabla H\left(x_k\right)\right\|^2\right]
        \leq &
2L_0-2L_{K}
+ 2\sum_{k=0}^{K-1}\left(A_{k+1}\Delta_k^{\mathcal{A}}
+B_{k+1}\Delta_k^{\mathcal{B}}
+C_{k+1}\Delta_k^{\mathcal{C}}\right),
\end{align*}
which leads to
\begin{align*}
 \inf_{k< K} \mathbb{E}[\|\nabla H\left(x_k\right)\|^2]
\leq
\frac{ 2(L_{0}-H_*)}{\sum_{k=0}^{K-1}\alpha_k}
+\frac{2}{\sum_{k=0}^{K-1}\alpha_k}
\sum_{k=0}^{K-1}\left(A_{k+1}\Delta_k^{\mathcal{A}}+
B_{k+1}\Delta_k^{\mathcal{B}}
+C_{k+1}\Delta_k^{\mathcal{C}}\right).
\end{align*}
For completeness, we will now verify $\text{Part}_i \leq 0$ for $i = 1, 2, \dots, 9$ individually.

\begin{itemize}
    \item[\textcircled{1}] First, using \eqref{coupling_biased_1}, we have the following implication:
\end{itemize}
{\setlength{\abovedisplayskip}{1pt}
\setlength{\belowdisplayskip}{3pt}
\begin{align*}
&\left(B_{k+1}\eta_k^{\mathcal{B}}+B'_{k+1}\hat{\eta}_k^{\mathcal{B},y} \right)  \alpha_k
\leq \left(B_{k+1}\eta_k^{\mathcal{B}}+B'_{k+1}\hat{\eta}_k^{\mathcal{B},y} \right)  m_{\alpha\beta}\beta_k
\leq \frac{1}{64\tilde{c}_2}.
\end{align*}}
    Similarly, we can derive that
\begin{align*}
&\left(A_{k+1}\eta_k^{\mathcal{A}}+A'_{k+1}\hat{\eta}_k^{\mathcal{A},x} \right)  \alpha_k
\leq
\frac{1}{96\tilde{c}_1},
\,\,
\left(A^{\prime}_{k+1}\hat{\eta}^{\prime \mathcal{A},x}
    +B^{\prime}_{k+1}\hat{\eta}^{\prime \mathcal{B},x}
     + C^{\prime}_{k+1}\hat{\eta}^{\prime \mathcal{C},x}\right)\alpha_k\leq
     \frac{1}{64},
\\&
\left(B_{k+1}\eta_k^{\mathcal{B}} +B'_{k+1}\hat{\eta}_k^{\mathcal{B},x}\right)  \alpha_k \leq  \frac{1}{96\tilde{c}_2},
\quad
\left(C_{k+1}\eta_k^{\mathcal{C}}+C'_{k+1}\hat{\eta}_k^{\mathcal{C},x} \right)  \alpha_k
\leq
\frac{1}{96\tilde{c}_1},
\\
&
\left(A_{k+1}\eta_k^{\mathcal{A}}+A'_{k+1}\hat{\eta}_k^{\mathcal{A},z} \right)  \gamma_k
\leq \min\left\{
\frac{1}{4\tilde{c}_2},\,
\frac{c_3^2}{72\tilde{c}_2L_z^2},\,
\frac{\mu}{40\tilde{c}_2 c_{10}L_z^2}
\right\},
\end{align*}
\begin{align*}&
\left(C_{k+1}\eta_k^{\mathcal{C}}+C'_{k+1}\hat{\eta}_k^{\mathcal{C},z} \right)  \gamma_k
\leq \min\left\{
\frac{1}{4\tilde{c}_2},\,
\frac{c_3^2}{72\tilde{c}_2L_z^2},\,
\frac{\mu}{40\tilde{c}_2 c_{10}L_z^2}
\right\},
\\&
A'_{k+1}\hat{\eta}_k^{\prime\mathcal{A},z}\gamma_k
\leq \min\left\{\frac{c_3^2}{36L_z^2}, \frac{\mu}{20c_{10}L_z^2}\right\},
\,
C'_{k+1}\hat{\eta}_k^{\prime\mathcal{C},z}\gamma_k
\leq \min\left\{\frac{c_3^2}{36L_z^2}, \frac{\mu}{20c_{10}L_z^2}\right\}.
\end{align*}

\begin{itemize}
    \item[\textcircled{2}] For \(\text{Part}_i\) \((i =  7, 8, 9)\), by applying the SE-CC Condition for $\mathcal{A}$, $\mathcal{B}$ and $\mathcal{C}$, we have \(\text{Part}_i \leq 0\) for \(i =  7, 8, 9\).
Moreover, by setting $ C_y  = {1}/{c_4}$, $ C_z=1/c_{10}$, we obtain
\begin{align}
    c_4\beta_k C_y
+B_{k+1}\theta_k^{\mathcal{B}}-B_k
\leq
-{\beta_k},\quad
c_{10}\gamma_k C_z
+C_{k+1}\theta_k^{\mathcal{C}}-C_k
\leq
-{\gamma_k}.\label{proof_biased_condition_BC}
\end{align}
\end{itemize}

\begin{itemize}
    \item[\textcircled{3}] For $\text{Part}_1$, since
\end{itemize}
\begin{align*}
    &\alpha_k
    \leq
    \min\left\{
    \frac{3}{8L_{y^*}^2}\beta_k,\,
    \frac{3}{16L_{z^*}^2}\gamma_k
    \right\},
    \quad
\left(A_{k+1}\eta_k^{\mathcal{A}}+A'_{k+1}\hat{\eta}_k^{\mathcal{A},x} \right) \alpha_k\leq \frac{1}{96\tilde{c}_1},\\
&
\left(A^{\prime}_{k+1}\hat{\eta}^{\prime \mathcal{A},x}
    +B^{\prime}_{k+1}\hat{\eta}^{\prime \mathcal{B},x}
     + C^{\prime}_{k+1}\hat{\eta}^{\prime \mathcal{C},x}\right)\alpha_k\leq
     \frac{1}{64},
\\
     &\left(B_{k+1}\eta_k^{\mathcal{B}} +B'_{k+1}\hat{\eta}_k^{\mathcal{B},x}\right)  \alpha_k \leq  \frac{1}{96\tilde{c}_2},
     \quad
     \left(C_{k+1}\eta_k^{\mathcal{C}}+C'_{k+1}\hat{\eta}_k^{\mathcal{C},x} \right) \alpha_k \leq \frac{1}{96\tilde{c}_1},
\end{align*}
it follows that
\begin{align*}
\text{Part}_1
&\leq
-\frac{\alpha_k}{16}
+2\Big(
A'_{k+1}\hat{\eta}^{\prime\mathcal{A},x}
+B'_{k+1}\hat{\eta}^{\prime\mathcal{B},x}
+C'_{k+1}\hat{\eta}^{\prime\mathcal{C},x}
\Big)\alpha_k^2\\
&\quad+
\tilde{c}_2\Big(B_{k+1}\eta_k^{\mathcal{B}}
+B'_{k+1}\hat{\eta}_k^{\mathcal{B},x}\Big)\alpha_k^2\\
&\quad+
\tilde{c}_1\Big(A_{k+1}\eta_k^{\mathcal{A}}
+A'_{k+1}\hat{\eta}_k^{\mathcal{A},x}
+C_{k+1}\eta_k^{\mathcal{C}}
+C'_{k+1}\hat{\eta}_k^{\mathcal{C},x}\Big)\alpha_k^2\\
&\leq
-\frac{\alpha_k}{32}
+\tilde{c}_2\Big(B_{k+1}\eta_k^{\mathcal{B}}
+B'_{k+1}\hat{\eta}_k^{\mathcal{B},x}\Big)\alpha_k^2\\
&\quad+
\tilde{c}_1\Big(C_{k+1}\eta_k^{\mathcal{C}}
+C'_{k+1}\hat{\eta}_k^{\mathcal{C},x}\Big)\alpha_k^2\\
&\quad+
\tilde{c}_1\Big(A_{k+1}\eta_k^{\mathcal{A}}
+A'_{k+1}\hat{\eta}_k^{\mathcal{A},x}\Big)\alpha_k^2\\
&\leq 0.
\end{align*}
\begin{itemize}
    \item[\textcircled{4}] For $\text{Part}_2$, under the parameter choices and bounds
\end{itemize}
\begin{align*}
&C_y=\frac{1}{c_4}\,\left(\Rightarrow c_3C_y=\frac{c_3^2}{3}\right),\quad
\alpha_k\leq \frac{c_3^2}{108c_1}\beta_k,\quad
\gamma_k\leq\frac{ c_3^2}{54c_1}\beta_k,
\quad
B'_{k+1}\hat{\eta}_k^{\prime\mathcal{B},y} \beta_k\leq \frac{c_3^2}{36c_2},
\\&
C'_{k+1}\hat{\eta}_k^{\prime\mathcal{C},y}\beta_k\leq \frac{c_3^2}{36c_2},
\quad
C'_{k+1}\hat{\eta}_k^{\prime\mathcal{C},z}\gamma_k\leq \frac{c_3^2}{36L_z^2},
\quad
A'_{k+1}\hat{\eta}_k^{\prime\mathcal{A},z}\gamma_k\leq \frac{c_3^2}{36L_z^2},
\\&
A'_{k+1}\hat{\eta}_k^{\prime\mathcal{A},y}\beta_k\leq \frac{c_3^2}{36c_2},
\quad
\left(A_{k+1}\eta_k^{\mathcal{A}}+A'_{k+1}\hat{\eta}_k^{\mathcal{A},y} \right) \beta_k \leq \frac{c_3^2}{72\tilde{c}_1c_2},
\\
&
\left(B_{k+1}\eta_k^{\mathcal{B}}+B'_{k+1}\hat{\eta}_k^{\mathcal{B},y} \right) \beta_k \leq
\frac{c_3^2}{72\tilde{c}_2^2},
\quad
\left(C_{k+1}\eta_k^{\mathcal{C}}+C'_{k+1}\hat{\eta}_k^{\mathcal{C},y} \right) \beta_k \leq
\frac{c_3^2}{72\tilde{c}_1c_2},
\\
&
\left(C_{k+1}\eta_k^{\mathcal{C}}+C'_{k+1}\hat{\eta}_k^{\mathcal{C},z} \right) \gamma_k \leq \frac{c_3^2}{72\tilde{c}_2L_z^2},
\quad
\left(A_{k+1}\eta_k^{\mathcal{A}}+A'_{k+1}\hat{\eta}_k^{\mathcal{A},z} \right) \gamma_k \leq \frac{c_3^2}{72\tilde{c}_2L_z^2}.
\end{align*}
Therefore, the sum of all nonnegative terms in $\text{Part}_2$ is at most $c_3 C_y \beta_k$, and hence $\text{Part}_2 \le 0$.

\begin{itemize}
    \item[\textcircled{5}] For $\text{Part}_3$,
    under the parameter choices and bounds
\end{itemize}
\begin{align*}
&\alpha_k\leq \frac{\mu}{60c_2c_{10}}\gamma_k,
\quad
A'_{k+1}\hat{\eta}_k^{\prime\mathcal{A},z}\gamma_k\leq
\frac{\mu}{20c_{10}L_z^2},
\quad
C'_{k+1}\hat{\eta}_k^{\prime\mathcal{C},z}\gamma_k\leq
\frac{\mu}{20c_{10}L_z^2},
\\
&\left(A_{k+1}\eta_k^{\mathcal{A}}+A'_{k+1}\hat{\eta}_k^{\mathcal{A},z} \right) \gamma_k \leq \frac{\mu}{40\tilde{c}_2 c_{10}L_z^2},
\quad
\left(C_{k+1}\eta_k^{\mathcal{C}}+C'_{k+1}\hat{\eta}_k^{\mathcal{C},z} \right) \gamma_k \leq \frac{\mu}{40\tilde{c}_2c_{10}L_z^2}.
\end{align*}
By substituting the above bounds into the definition of $\text{Part}_3$ and summing the resulting termwise estimates, we conclude that the total contribution of all nonnegative terms is dominated by the negative term $-(C_z\mu/4)\gamma_k$. Hence, $\text{Part}_3 \le 0$.

\begin{itemize}
    \item[\textcircled{6}] For $\text{Part}_4$:
    by the SE-CC condition and
    $(A^{\prime}_{k+1}\hat{\eta}^{\prime \mathcal{A},x}
    +B^{\prime}_{k+1}\hat{\eta}^{\prime \mathcal{B},x}
     + C^{\prime}_{k+1}\hat{\eta}^{\prime \mathcal{C},x})\alpha_k\leq {1}/{2}$, we have $\text{Part}_4\leq 0$.
\end{itemize}

\begin{itemize}
    \item[\textcircled{7}] For $\text{Part}_5$:
    By combining \eqref{proof_biased_condition_BC} with the following conditions
\end{itemize}
{\setlength{\abovedisplayskip}{-5pt}
\setlength{\belowdisplayskip}{3pt}
\begin{align*}
&\left(A_{k+1}\eta_k^{\mathcal{A}}+A'_{k+1}\hat{\eta}_k^{\mathcal{A},y} \right)  \beta_k \leq {1}/{(6\tilde{c}_1)},\quad
\left(B_{k+1}\eta_k^{\mathcal{B}}+B'_{k+1}\hat{\eta}_k^{\mathcal{B},y} \right)  \beta_k \leq {1}/{(6\tilde{c}_2)},\\
&
\left(C_{k+1}\eta_k^{\mathcal{C}}+C'_{k+1}\hat{\eta}_k^{\mathcal{C},y} \right)  \beta_k \leq {1}/{(6\tilde{c}_1)}.
\end{align*}}
Plugging these bounds into $\text{Part}_5$ shows that the sum of all nonnegative terms is at most $\beta_k/2$. Therefore, $\text{Part}_5 \le 0$.

\begin{itemize}
\item[\textcircled{8}] For $\text{Part}_6$:
Similarly, the validity of inequality $\text{Part}_6\leq 0$ relies on inequality \eqref{proof_biased_condition_BC} and
\[
\left(A_{k+1}\eta_k^{\mathcal{A}}+A'_{k+1}\hat{\eta}_k^{\mathcal{A},z} \right) \gamma_k \leq {1}/{(4\tilde{c}_2)},
\quad
\left(C_{k+1}\eta_k^{\mathcal{C}}+C'_{k+1}\hat{\eta}_k^{\mathcal{C},z} \right) \gamma_k \leq {1}/{(4\tilde{c}_2)}.
\]
\end{itemize}
Thus, the proof of the theorem is completed.
\end{proof}

\section{Proof of Theorem \ref{theorem_unbiased}}
\begin{proof}
Let \( C_y = C_z = 1/2 \). Then, applying Lemmas \ref{H}, \ref{lemma_ma}, \ref{y-y*_unbiased}, \ref{z-z*_unbiased} and combining with \eqref{lemma_vr_a}-\eqref{lemma_memory_a} and \eqref{lemma_vr_b}-\eqref{lemma_memory_c}, we obtain
\begin{align*}
    L_{k+1}-L_k
    \leq &
\text{Part}_0'
            \mathbb{E}\left[\left\|\nabla H\left(x_k\right)\right\|^2\right]
+\text{Part}_1'
            \mathbb{E}\left[\left\|\nabla H\left(x_k\right)-v_k^x\right\|^2\right]
+\text{Part}_2'
            \mathbb{E}\left[\left\|v_k^x\right\|^2\right]\\
&
+\text{Part}_3'
            \mathbb{E}\left[\left\|y_k-y^*_k\right\|^2\right]
+\text{Part}_4'
            \mathbb{E}\left[\left\|z_k-z^*_k\right\|^2\right]
+ \text{Part}_5'
        \mathbb{E}\left[\left\| \hat{v}_{k}^x-  D^x_k\right\|^2\right]\\
&
+\text{Part}_6'
        \mathbb{E}\left[\left\|D^y_k-v_k^y\right\|^2\right]
+\text{Part}_7'
        \mathbb{E}\left[\left\|D^z_k-v_k^z\right\|^2\right]
+\text{Part}_8'
        \delta_k^{\mathcal{A}}
+\text{Part}_9'
        \delta_k^{\mathcal{B}}
        \\
&
+\text{Part}_{10}'
        \delta_k^{\mathcal{C}}
+A_{k+1}''\rho_k^2\Delta_k^{\mathcal{A}}
+A_{k+1}\Delta_k^{\mathcal{A}}
+B_{k+1}\Delta_k^{\mathcal{B}}
+C_{k+1}\Delta_k^{\mathcal{C}},
\end{align*}
where\begin{align*}
&\text{Part}_0'=
    -{\alpha_k}/{2}
    +2\left(A_{k+1}'\hat{\eta}^{\prime\mathcal{A},x}+B_{k+1}'\hat{\eta}^{\prime\mathcal{B},x}+C_{k+1}'\hat{\eta}^{\prime\mathcal{C},x}\right)\alpha_k^2,
\\&
\text{Part}_1'=
    {\alpha_k}/{2}
    +A_{k+1}''\left(1-\rho_k\right)-A_{k}'',
\\
&\text{Part}_2'=
    -\frac{\alpha_k}{4}
    +\frac{ c_6\alpha_k^2}{2\beta_k }
    +\frac{c_9 \alpha_k^2}{2\gamma_k }
    +\frac{3 \alpha_k^2\left(L^H\right)^2}{\rho_k}A_{k+1}''
    +12c_1 \rho_k\alpha_k^2A_{k+1}''
    +c_1\rho_k^2A_{k+1}''\eta_k^{\mathcal{A}}\alpha_k^2
    \\
    &\quad
    + \left(\eta_k^{\mathcal{A}}A_{k+1}c_1 + \hat{\eta}^{\mathcal{A},x}_k A'_{k+1}
    +
    \eta_k^{\mathcal{C}}C_{k+1}c_1+\hat{\eta}_k^{\mathcal{C},x}C'_{k+1}
    + \eta^{\mathcal{B}}_k B_{k+1}c_2 + \hat{\eta}^{\mathcal{B},x}_k B'_{k+1} \right)\alpha_k^2,
    \end{align*}
    \begin{align*}
    & \text{Part}_3'=
    -\mu  \beta_k/2
    +c_8  \gamma_k/2
    +c_2 \left( 2{B}_{k+1}\eta_k^{\mathcal{B}}c_2 + B'_{k+1}(2\hat{\eta}^{\mathcal{B},y}_k + \hat{\eta}^{\prime\mathcal{B},y}_k)\right)  \beta_k^2
    \\&\quad
    +c_2\left(2A_{k+1}c_1\eta_k^{\mathcal{A}} + A'_{k+1}(2\hat{\eta}^{\mathcal{A},y}_k +\hat{\eta}^{\prime\mathcal{A},y}_k)
    +2{C}_{k+1}\eta_k^{\mathcal{C}}c_1 + C'_{k+1}(2\hat{\eta}^{\mathcal{C},y}_k + \hat{\eta}^{\prime\mathcal{C},y}_k)
    \right)\beta_k^2
    \\
    & \quad
    +L_z^2 \left( 2\eta_k^{\mathcal{C}}C_{k+1}c_2 + C'_{k+1}(2\hat{\eta}^{\mathcal{C},z}_k + \hat{\eta}^{\prime\mathcal{C},z}_k )+2\eta_k^{\mathcal{A}}A_{k+1}c_2 + A'_{k+1}(2\hat{\eta}^{\mathcal{A},z}_k + \hat{\eta}^{\prime\mathcal{A},z}_k )\right)  \gamma_k^2
    \\
    &\quad
    +24c_1c_2\rho_k\beta_k^2A_{k+1}''
    +2 c_1c_2\rho_k^2A_{k+1}''\eta_k^{\mathcal{A}}\beta_k^2
    +2c_2 L_z^2\rho_k^2A_{k+1}''\eta_k^{\mathcal{A}}\gamma_k^2
    +24c_2 L_z^2\rho_k\gamma_k^2A_{k+1}''
    \\
    &\quad+9c_1\rho_kA_{k+1}''
    +6c_1\left(\hat{\eta}_k^{\prime \mathcal{A},x}A_{k+1}'+\hat{\eta}_k^{\prime \mathcal{B},x}B_{k+1}'+\hat{\eta}_k^{\prime \mathcal{C},x}C_{k+1}'\right)\alpha_k^2
    ,
\\[5pt]
    &\text{Part}_4'=
    9c_2\rho_kA_{k+1}''
    -\mu  \gamma_k/2
    +24c_2 L_z^2\rho_k\gamma_k^2A_{k+1}''
    +2c_2 L_z^2\rho_k^2A_{k+1}''\eta_k^{\mathcal{A}}\gamma_k^2
    \\
    &\quad\quad\quad
    + L_z^2 \left(2c_2\eta_k^{\mathcal{A}}A_{k+1} + A'_{k+1}(2\hat{\eta}^{\mathcal{A},z}_k + \hat{\eta}^{\prime\mathcal{A},z}_k)
    +2c_2\eta_k^{\mathcal{C}}C_{k+1} + C'_{k+1}(2\hat{\eta}^{\mathcal{C},z}_k + \hat{\eta}^{\prime\mathcal{C},z}_k)
    \right)  \gamma_k^2
     \\
    &\quad\quad\quad
    +
     6c_2\left(\hat{\eta}_k^{\prime \mathcal{A},x}A_{k+1}'+\hat{\eta}_k^{\prime \mathcal{B},x}B_{k+1}'+\hat{\eta}_k^{\prime \mathcal{C},x}C_{k+1}'\right)\alpha_k^2,
\\[5pt]
    &\text{Part}_5'=
    \rho_k^2A_{k+1}''\theta_k^{\mathcal{A}}
+A_{k+1}\theta_k^{\mathcal{A}}-A_k,
\\
    &
\text{Part}_6'=
     \beta_k^2
    +\theta_k^{\mathcal{B}}B_{k+1}-B_k
    +24c_1 \rho_k\beta_k^2A_{k+1}''
    +2c_1\rho_k^2A_{k+1}''\eta_k^{\mathcal{A}}\beta_k^2
     \\
    &
    \quad\quad\quad
    +2\left(\eta_k^{\mathcal{B}}B_{k+1}c_2 + \hat{\eta}^{\mathcal{B},y}_k B'_{k+1}
    + \eta_k^{\mathcal{A}}A_{k+1}c_1 + \hat{\eta}^{\mathcal{A},y}_k A'_{k+1}
    +\eta_k^{\mathcal{C}}C_{k+1} c_1+\hat{\eta}_k^{\mathcal{C},y}C'_{k+1} \right) \beta_k^2,
\\[5pt]
    &
\text{Part}_7'=
     \gamma_k^2
    +\theta_k^{\mathcal{C}}C_{k+1}-C_k
    +2c_2\rho_k^2A_{k+1}''\eta_k^{\mathcal{A}}\gamma_k^2
    +24c_2\rho_k\gamma_k^2A_{k+1}''
    \\
    &
    \quad\quad\quad
    +2 \left(A_{k+1}\eta_k^{\mathcal{A}}c_2 + \hat{\eta}^{\mathcal{A},z}_k A'_{k+1}
    + \eta_k^{\mathcal{C}}C_{k+1}c_2 +\hat{\eta}_k^{\mathcal{C},z}C'_{k+1}\right)  \gamma_k^2
    ,
    \\
    &
\text{Part}_8'=
\rho_k^2 A_{k+1}''\tau_k^{\mathcal{A}}
+\tau_k^{\mathcal{A}}A_{k+1}
+\lambda_k^{\mathcal{A}} A'_{k+1}-A'_k,
    \\
        &
\text{Part}_9'=
    \tau_k^{\mathcal{B}}B_{k+1}
    +\lambda_k^{\mathcal{B}} B'_{k+1}-B'_k,
\\
&
\text{Part}_{10}'=
    \tau_k^{\mathcal{C}}C_{k+1}
+\lambda_k^{\mathcal{C}} C'_{k+1}-C'_k.
\end{align*}
Following a similar approach to the proof of Theorem \ref{theorem_biased},
we have $\text{Part}_0' \leq -\frac{\alpha_k}{4}$ and
 $\text{Part}_i' \leq 0$ for $i = 1, 2, \dots, 10$.
Specifically, the UE-CC and UEMA-CC conditions ensure that $\text{Part}_i' \leq 0$ holds for $i = 1, 5, 8, 9, 10$. For the remaining $i$, following a similar process to \textcircled{1} in the proof of Theorem \ref{theorem_biased},
we can derive the following conditions:
\begin{align*}
&\hat{\eta}^{\prime\mathcal{A},x}_k A'_{k+1}\alpha_k
\leq \frac{1}{24},\quad
\hat{\eta}^{\prime\mathcal{B},x}_k B'_{k+1}\alpha_k
\leq \frac{1}{24},\quad
\hat{\eta}^{\prime\mathcal{C},x}_k C'_{k+1}\alpha_k
\leq \frac{1}{24},\quad
\hat{\eta}^{\prime\mathcal{B},y}_k B'_{k+1}\beta_k
\leq \frac{\mu}{16c_2},
\\&
    \left(\eta_k^{\mathcal{A}}A_{k+1} + \hat{\eta}^{\mathcal{A},x}_k A'_{k+1}\right)  \alpha_k \leq \frac{1}{32\tilde{c}_1},
\quad
\left(\eta_k^{\mathcal{A}}A_{k+1} + \hat{\eta}^{\mathcal{A},y}_k A'_{k+1}\right)
    \leq
       \frac{1}{10\tilde{c}_2}
       ,
\\&
     \left(\eta_k^{\mathcal{A}}A_{k+1} + \hat{\eta}^{\mathcal{A},z}_k A'_{k+1}\right)
    \leq
      \frac{1}{8\tilde{c}_2},
\quad
\hat{\eta}^{\prime\mathcal{A},y}_k A'_{k+1}\beta_k
\leq \frac{\mu}{16c_2},
\quad
\hat{\eta}^{\prime\mathcal{A},z}_k A'_{k+1}\gamma_k
\leq \frac{\mu}{10L_z^2},
\\&
    \left(\eta^{\mathcal{B}}_k B_{k+1} + \hat{\eta}^{\mathcal{B},x}_k B'_{k+1} \right)\alpha_k
    \leq
    \frac{1}{32\tilde{c}_2},
\quad
     \left(\eta_k^{\mathcal{B}}B_{k+1} + \hat{\eta}^{\mathcal{B},y}_k B'_{k+1}\right)
     \leq
     \frac{1}{10\tilde{c}_2},\\
&
    \left( \eta_k^{\mathcal{C}}C_{k+1}+\hat{\eta}_k^{\mathcal{C},x}C'_{k+1} \right)\alpha_k
     \leq
     \frac{1}{32\tilde{c}_1},
\quad
 \left( \eta_k^{\mathcal{C}}C_{k+1} +\hat{\eta}_k^{\mathcal{C},y}C'_{k+1} \right)  \leq
    \frac{1}{10\tilde{c}_2},
\\&
     \left( \eta_k^{\mathcal{C}}C_{k+1} +\hat{\eta}_k^{\mathcal{C},z}C'_{k+1} \right)  \leq
    \frac{1}{8\tilde{c}_2},
    \quad
\hat{\eta}^{\prime\mathcal{C},y}_k C'_{k+1}\beta_k
\leq \frac{\mu}{16c_2},
\quad
\hat{\eta}^{\prime\mathcal{C},z}_k C'_{k+1}\gamma_k
\leq \frac{\mu}{10L_z^2},
\\
 &
\rho_k\alpha_kA_{k+1}'' \leq \frac{1}{384c_1},
\quad
\rho_k^2\alpha_kA_{k+1}''\eta_k^{\mathcal{A}}
\leq\frac{1}{32c_1},
\quad
   \rho_k^2A_{k+1}''\eta_k^{\mathcal{A}}
   \leq
   \min
   \left\{
   \frac{1}{10c_1},\,
   \frac{1}{8c_2}
   \right\},
\end{align*}
which guarantee that $\text{Part}_i' \leq 0$ holds for $i = 2, 3, 4, 6, 7$ and $\text{Part}_0' \leq -\frac{\alpha_k}{4}$.
Similar to the proof of Theorem \ref{theorem_biased}, we obtain the final conclusion.
\end{proof}

\vskip 0.2in
\bibliographystyle{plainnat}
\bibliography{sample}

@inProceedings{lorraine2020optimizing,
  title = 	 {Optimizing Millions of Hyperparameters by Implicit Differentiation},
  author =       {Lorraine, Jonathan and Vicol, Paul and Duvenaud, David},
  booktitle={International Conference on Artificial Intelligence and Statistics},
  pages = 	 {1540--1552},
  year = 	 {2020},
  volume = 	 {108},
  publisher =    {PMLR}
}

@inProceedings{franceschi2018bilevel,
  title = 	 {Bilevel Programming for Hyperparameter Optimization and Meta-Learning},
  author =       {Franceschi, Luca and Frasconi, Paolo and Salzo, Saverio and Grazzi, Riccardo and Pontil, Massimiliano},
  booktitle = 	 {International Conference on Machine Learning},
  pages = 	 {1568--1577},
  year = 	 {2018},
  volume = 	 {80},
  publisher =    {PMLR}
}

@inproceedings{NEURIPS2023_a0251e49,
 author = {Hao, Jie and Ji, Kaiyi and Liu, Mingrui},
 booktitle = {Advances in Neural Information Processing Systems},
 pages = {51026--51049},
 title = {Bilevel Coreset Selection in Continual Learning: A New Formulation and Algorithm},
 volume = {36},
 year = {2023}
}

@InProceedings{shen2024principled,
  title = 	 {Principled Penalty-based Methods for Bilevel Reinforcement Learning and {RLHF}},
  author =       {Shen, Han and Yang, Zhuoran and Chen, Tianyi},
  booktitle = 	 {International Conference on Machine Learning},
  pages = 	 {44774--44799},
  year = 	 {2024},
  volume = 	 {235},
  publisher =    {PMLR}
}

@article{hong2023two,
author = {Hong, Mingyi and Wai, Hoi-To and Wang, Zhaoran and Yang, Zhuoran},
title = {A Two-Timescale Stochastic Algorithm Framework for Bilevel Optimization: Complexity Analysis and Application to Actor-Critic},
journal = {SIAM Journal on Optimization},
volume = {33},
number = {1},
pages = {147-180},
year = {2023}
}

@inproceedings{ji2020convergence,
author = {Ji, Kaiyi and Lee, Jason D and Liang, Yingbin and Poor, H. Vincent},
 booktitle = {Advances in Neural Information Processing Systems},
 title = {Convergence of Meta-Learning with Task-Specific Adaptation over Partial Parameters},
  volume={33},
  pages={11490--11500},
 year = {2020}
}

@book{shalev2014understanding,
  title={Understanding machine learning: From theory to algorithms},
  author={Shalev-Shwartz, Shai and Ben-David, Shai},
  year={2014},
  publisher={Cambridge university press}
}

@article{chen2024optimal,
  title={Optimal algorithms for stochastic bilevel optimization under relaxed smoothness conditions},
  author={Chen, Xuxing and Xiao, Tesi and Balasubramanian, Krishnakumar},
  journal={Journal of Machine Learning Research},
  volume={25},
  number={151},
  pages={1--51},
  year={2024}
}

@inproceedings{chuspaba,
  title = 	 {{SPABA}: A Single-Loop and Probabilistic Stochastic Bilevel Algorithm Achieving Optimal Sample Complexity},
  author={Chu, Tianshu and Xu, Dachuan and Yao, Wei and Zhang, Jin},
  booktitle = 	 {International Conference on Machine Learning},
  pages = 	 {8848--8903},
  year = 	 {2024},
  volume = 	 {235},
 publisher =    {PMLR}
}

@inproceedings{arbel2022amortized,
  title={Amortized implicit differentiation for stochastic bilevel optimization},
  author={Arbel, Michael and Mairal, Julien},
  booktitle={International Conference on Learning Representations},
  year={2022}
}

@inProceedings{dagreou2022framework,
 author = {Dagr\'{e}ou, Mathieu and Ablin, Pierre and Vaiter, Samuel and Moreau, Thomas},
 booktitle = {Advances in Neural Information Processing Systems},
 pages = {26698--26710},
 title = {A framework for bilevel optimization that enables  stochastic and global variance reduction algorithms},
 volume = {35},
 year = {2022}
}

@inproceedings{pedregosa2016hyperparameter,
title = 	 {Hyperparameter optimization with approximate gradient},
  author = 	 {Pedregosa, Fabian},
  booktitle = 	 {International Conference on Machine Learning},
  pages = 	 {737--746},
  year = 	 {2016},
  volume = 	 {48},
  publisher =    {PMLR},
}

@inproceedings{liao2018reviving,
title = 	 {Reviving and Improving Recurrent Back-Propagation},
  author =       {Liao, Renjie and Xiong, Yuwen and Fetaya, Ethan and Zhang, Lisa and Yoon, KiJung and Pitkow, Xaq and Urtasun, Raquel and Zemel, Richard},
  booktitle = 	 {International Conference on Machine Learning},
  pages = 	 {3082--3091},
  year = 	 {2018},
  volume = 	 {80},
  publisher =    {PMLR}
}

@inproceedings{li2022fully,
  title={A fully single loop algorithm for bilevel optimization without hessian inverse},
  author={Li, Junyi and Gu, Bin and Huang, Heng},
  booktitle={Proceedings of the AAAI Conference on Artificial Intelligence},
  volume={36},
  pages={7426--7434},
  year={2022}
}

@inproceedings{cutkosky2019momentum,
   author = {Cutkosky, Ashok and Orabona, Francesco},
 booktitle = {Advances in Neural Information Processing Systems},
 pages = {15236 -- 15245},
 title = {Momentum-Based Variance Reduction in Non-Convex {SGD}},
 volume = {32},
 year = {2019}
}

@inproceedings{fang2018spider,
 author = {Fang, Cong and Li, Chris Junchi and Lin, Zhouchen and Zhang, Tong},
 booktitle = {Advances in Neural Information Processing Systems},
 pages = {687 -- 697},
 title = {{SPIDER}: Near-Optimal Non-Convex Optimization via Stochastic Path-Integrated Differential Estimator},
 volume = {31},
 year = {2018}
}

@inproceedings{li2021page,
  title={{PAGE}: A simple and optimal probabilistic gradient estimator for nonconvex optimization},
  author={Li, Zhize and Bao, Hongyan and Zhang, Xiangliang and Richt{\'a}rik, Peter},
  booktitle={International Conference on Machine Learning},
  pages={6286--6295},
  year={2021},
  volume = 	 {139},
  publisher =    {PMLR}
}

@article{li2021zerosarah,
  title={{ZeroSARAH}: Efficient nonconvex finite-sum optimization with zero full gradient computation},
  author={Li, Zhize and Hanzely, Slavom{\'\i}r and Richt{\'a}rik, Peter},
  journal={arXiv preprint arXiv:2103.01447},
  year={2021}
}

@article{pham2020proxsarah,
    author  = {Nhan H. Pham and Lam M. Nguyen and Dzung T. Phan and Quoc Tran-Dinh},
  title   = {{ProxSARAH}: An Efficient Algorithmic Framework for Stochastic Composite Nonconvex Optimization},
  journal = {Journal of Machine Learning Research},
  year    = {2020},
  volume  = {21},
  number  = {110},
  pages   = {1--48}
}

@inProceedings{dagreou2023lower,
  title={A Lower Bound and a Near-Optimal Algorithm for Bilevel Empirical Risk Minimization},
  author={Dagr\'{e}ou, Mathieu and Moreau, Thomas and Vaiter, Samuel and Ablin, Pierre},
  booktitle={International Conference on Artificial Intelligence and Statistics},
 pages = {82--90},
  year = {2024},
  volume = 	 {238},
  publisher =    {PMLR}
}

@inproceedings{khanduri2021near,
 author = {Khanduri, Prashant and Zeng, Siliang and Hong, Mingyi and Wai, Hoi-To and Wang, Zhaoran and Yang, Zhuoran},
 booktitle = {Advances in Neural Information Processing Systems},
 pages = {30271--30283},
 title = {A Near-Optimal Algorithm for Stochastic Bilevel Optimization via Double-Momentum},
 volume = {34},
 year = {2021}
}

@inProceedings{yang2021provably,
  title={Provably faster algorithms for bilevel optimization},
  author={Yang, Junjie and Ji, Kaiyi and Liang, Yingbin},
  booktitle={Advances in Neural Information Processing Systems},
  volume={34},
  pages={13670--13682},
  year={2021}
}

@article{ghadimi2018approximation,
  title={Approximation methods for bilevel programming},
  author={Ghadimi, Saeed and Wang, Mengdi},
  journal={arXiv preprint arXiv:1802.02246},
  year={2018}
}

@inproceedings{Zhang2020Why,
title={Why Gradient Clipping Accelerates Training: A Theoretical Justification for Adaptivity},
author={Jingzhao Zhang and Tianxing He and Suvrit Sra and Ali Jadbabaie},
booktitle={International Conference on Learning Representations},
year={2020}
}

@article{10614379,
  author={Bhattacharjee, Abhiroop and Moitra, Abhishek and Panda, Priyadarshini},
  journal={IEEE Transactions on Computer-Aided Design of Integrated Circuits and Systems}, 
  title={ClipFormer: Key–Value Clipping of Transformers on Memristive Crossbars for Write Noise Mitigation}, 
  year={2025},
  volume={44},
  number={2},
  pages={592-601}}

@inproceedings{pmlr-v139-ji21c,
  title = 	 {Bilevel Optimization: Convergence Analysis and Enhanced Design},
  author =       {Ji, Kaiyi and Yang, Junjie and Liang, Yingbin},
 booktitle = 	 {International Conference on Machine Learning},
  pages = 	 {4882--4892},
  year = 	 {2021},
  volume = 	 {139},
  publisher =    {PMLR}
}

@inproceedings{SGD,
author="Bottou, L{\'e}on",
title="Large-Scale Machine Learning with Stochastic Gradient Descent",
booktitle="International Conference on Computational Statistics",
year="2010",
pages="177--186"
}

@inproceedings{saga,
 author = {Defazio, Aaron and Bach, Francis and Lacoste-Julien, Simon},
 booktitle = {Advances in Neural Information Processing Systems},
 pages = {1646 -- 1654},
 title = {{SAGA}: A Fast Incremental Gradient Method With Support for Non-Strongly Convex Composite Objectives},
 volume = {27},
 year = {2014}
}

@article{4308316,
  author={Sinha, Naresh K. and Griscik, Michael P.},
  journal={IEEE Transactions on Systems, Man, and Cybernetics}, 
  title={A Stochastic Approximation Method}, 
  year={1971},
  volume={SMC-1},
  number={4},
  pages={338-344}}

@inproceedings{NIPS2016_c74d97b0,
 author = {Shamir, Ohad},
 booktitle = {Advances in Neural Information Processing Systems},
 pages = {46 -- 54},
 title = {Without-Replacement Sampling for Stochastic Gradient Methods},
 volume = {29},
 year = {2016}
}

@book{lan2020first,
  title={First-order and stochastic optimization methods for machine learning},
  author={Lan, Guanghui},
  volume={1},
  year={2020},
  publisher={Springer}
}

@inproceedings{NIPS2013_ac1dd209,
 author = {Johnson, Rie and Zhang, Tong},
 booktitle = {Advances in Neural Information Processing Systems},
 pages = {315 -- 323},
 title = {Accelerating Stochastic Gradient Descent using Predictive Variance Reduction},
 volume = {26},
 year = {2013}
}

@inproceedings{NIPS2012_905056c1,
 author = {Roux, Nicolas and Schmidt, Mark and Bach, Francis},
 booktitle = {Advances in Neural Information Processing Systems},
 pages = {2663 -- 2671},
 title = {A Stochastic Gradient Method with an Exponential Convergence Rate for Finite Training Sets},
 volume = {25},
 year = {2012}
}

@article{JMLR:v21:18-447,
  author  = {Dongruo Zhou and Pan Xu and Quanquan Gu},
  title   = {Stochastic Nested Variance Reduction for Nonconvex Optimization},
  journal = {Journal of Machine Learning Research},
  year    = {2020},
  volume  = {21},
  number  = {103},
  pages   = {1--63}
}

@inproceedings{pmlr-v235-oko24a,
  title = 	 {{SILVER}: Single-loop variance reduction and application to federated learning},
  author =       {Oko, Kazusato and Akiyama, Shunta and Wu, Denny and Murata, Tomoya and Suzuki, Taiji},
  booktitle = 	 {International Conference on Machine Learning},
  pages = 	 {38683--38739},
  year = 	 {2024},
  volume = 	 {235},
  publisher =    {PMLR}}

@inproceedings{NEURIPS2019_512c5cad,
 author = {Wang, Zhe and Ji, Kaiyi and Zhou, Yi and Liang, Yingbin and Tarokh, Vahid},
 booktitle = {Advances in Neural Information Processing Systems},
 pages = {2406 -- 2416},
 title = {Spider{B}oost and Momentum: Faster Variance Reduction Algorithms},
 volume = {32},
 year = {2019}
}

@inproceedings{pmlr-v97-zhou19b,
  title = 	 {Lower Bounds for Smooth Nonconvex Finite-Sum Optimization},
  author =       {Zhou, Dongruo and Gu, Quanquan},
  booktitle = 	 {International Conference on Machine Learning},
  pages = 	 {7574--7583},
  year = 	 {2019},
  volume = 	 {97},
  publisher =    {PMLR}
}

@article{arjevani2023lower,
  title={Lower bounds for non-convex stochastic optimization},
  author={Arjevani, Yossi and Carmon, Yair and Duchi, John C and Foster, Dylan J and Srebro, Nathan and Woodworth, Blake},
  journal={Mathematical Programming},
  volume={199},
  number={1},
  pages={165--214},
  year={2023},
  publisher={Springer}
}

@inproceedings{
li2024provably,
author = {Li, Junyi and Huang, Heng},
 booktitle = {Advances in Neural Information Processing Systems},
 pages = {70520--70556},
 title = {Provably Faster Algorithms for Bilevel Optimization via Without-Replacement Sampling},
 volume = {37},
 year = {2024}
}

@inproceedings{kwon2023fully,
  title={A fully first-order method for stochastic bilevel optimization},
  author={Kwon, Jeongyeol and Kwon, Dohyun and Wright, Stephen and Nowak, Robert D},
  booktitle={International Conference on Machine Learning},
  pages={18083--18113},
  year={2023},
volume = 	 {202},
  publisher={PMLR}
}

@inproceedings{yang2023achieving,
 author = {Yang, Yifan and Xiao, Peiyao and Ji, Kaiyi},
 booktitle = {Advances in Neural Information Processing Systems},
 pages = {39491--39503},
 title={Achieving ${O}(\epsilon^{-1.5}) $ Complexity in Hessian/Jacobian-free Stochastic Bilevel Optimization},
 volume = {36},
 year = {2023}
}

@article{chen2023near,
  title={Near-optimal nonconvex-strongly-convex bilevel optimization with fully first-order oracles},
  author={Chen, Lesi and Ma, Yaohua and Zhang, Jingzhao},
  journal={Journal of Machine Learning Research},
  volume={26},
  number={109},
  pages={1--56},
  year={2025}
}

@inproceedings{NEURIPS2022_1a82986c,
 author = {Sow, Daouda and Ji, Kaiyi and Liang, Yingbin},
 booktitle = {Advances in Neural Information Processing Systems},
 pages = {4136--4149},
 title = {On the Convergence Theory for Hessian-Free Bilevel Algorithms},
 volume = {35},
 year = {2022}
}

@article{gu2021optimizing,
  title={Optimizing large-scale hyperparameters via automated learning algorithm},
  author={Gu, Bin and Liu, Guodong and Zhang, Yanfu and Geng, Xiang and Huang, Heng},
  journal={arXiv preprint arXiv:2102.09026},
  year={2021}
}

@inproceedings{
kwon2024on,
title={On Penalty Methods for Nonconvex Bilevel Optimization and First-Order Stochastic Approximation},
author={Jeongyeol Kwon and Dohyun Kwon and Stephen Wright and Robert D Nowak},
booktitle={International Conference on Learning Representations},
year={2024},
}

@inproceedings{
yao2025overcoming,
title={Overcoming Lower-Level Constraints in Bilevel Optimization: A Novel Approach with Regularized Gap Functions},
author={Wei Yao and Haian Yin and Shangzhi Zeng and Jin Zhang},
booktitle={International Conference on Learning Representations},
year={2025}
}

@article{Ghadimi-lan,
author = {Ghadimi, Saeed and Lan, Guanghui},
title = {Stochastic First- and Zeroth-Order Methods for Nonconvex Stochastic Programming},
journal = {SIAM Journal on Optimization},
volume = {23},
number = {4},
pages = {2341-2368},
year = {2013}}

@article{shen2025penalty,
  title={On penalty-based bilevel gradient descent method},
  author={Shen, Han and Xiao, Quan and Chen, Tianyi},
  journal={Mathematical Programming},
  volume={199},
  pages={1--51},
  year={2025},
  publisher={Springer}
}

@article{huang2023momentum,
  title={On momentum-based gradient methods for bilevel optimization with nonconvex lower-level},
  author={Huang, Feihu},
  journal={arXiv preprint arXiv:2303.03944},
  year={2023}
}

@InProceedings{pmlr-v202-hu23d,
  title = 	 {Blockwise Stochastic Variance-Reduced Methods with Parallel Speedup for Multi-Block Bilevel Optimization},
  author =       {Hu, Quanqi and Qiu, Zi-Hao and Guo, Zhishuai and Zhang, Lijun and Yang, Tianbao},
  booktitle = 	 {International Conference on Machine Learning},
  pages = 	 {13550--13583},
  year = 	 {2023},
  volume = 	 {202},
  publisher =    {PMLR}
}

\end{document}